\numberwithin{equation}{section}
\theoremstyle{plain}
	\newtheorem{theorem}{Theorem}[section]
	\newtheorem{lemma}[theorem]{Lemma}
	\newtheorem{proposition}[theorem]{Proposition}
\theoremstyle{definition}
	\newtheorem{definition}[theorem]{Definition}
	\newtheorem{remark}[theorem]{Remark}
\newcommand{\N}{\mathbb{N}}
\newcommand{\R}{\mathbb{R}}
\newcommand{\G}{\mathbb{G}}
\newcommand{\leb}{\mathcal{L}}
\newcommand{\de}{\partial}
\newcommand{\eps}{\varepsilon}
\newcommand{\weakto}{\rightharpoonup}
\newcommand{\prob}{\mathcal{P}}
\newcommand{\h}{\mathsf{h}}
\newcommand{\F}{\mathsf{F}}
\newcommand{\ch}{\mathsf{Ch}}
\newcommand{\dd}{\mathsf{d}}
\newcommand{\dcc}{\mathsf{d}_{\mathsf{cc}}}
\newcommand{\deps}{\mathsf{d}_\eps}
\newcommand{\ent}{\mathsf{Ent}}
\newcommand{\W}{\mathsf{W}}
\newcommand{\Bcc}{\mathsf{B}_\mathbb{G}}
\newcommand{\D}{\mathrm{D}}
\newcommand{\m}{\mathfrak{m}}
\renewcommand{\phi}{\varphi}
\renewcommand{\rho}{\varrho}
\renewcommand{\theta}{\vartheta}
\DeclareMathOperator{\supp}{supp}
\DeclareMathOperator{\dom}{Dom}
\DeclareMathOperator{\diag}{diag}
\DeclareMathOperator{\vol}{vol}
\DeclareMathOperator{\ric}{Ric}
\DeclareMathOperator{\diverg}{div}
\DeclareMathOperator{\tang}{Tan}
\DeclareMathOperator{\Lip}{Lip}
\DeclarePairedDelimiter{\scalar}{<}{>}                                     
\DeclarePairedDelimiter{\set}{\{}{\}}
\DeclarePairedDelimiter{\abs}{|}{|}
\mathchardef\ordinarycolon\mathcode`\:
\begin{document}

\title{Heat and entropy flows in Carnot groups}

\author[L. Ambrosio]{Luigi Ambrosio}
\address{Scuola Normale Superiore, Piazza Cavalieri 7, 56126 Pisa, Italy}
\email{luigi.ambrosio@sns.it}

\author[G. Stefani]{Giorgio Stefani}
\address{Scuola Normale Superiore, Piazza Cavalieri 7, 56126 Pisa, Italy}
\email{giorgio.stefani@sns.it}

\date{\today}

\keywords{Carnot group, entropy, gradient flow, sub-elliptic heat equation}

\subjclass[2010]{53C17, 28A33, 35K05}

\begin{abstract}
We prove the correspondence between the solutions of the sub-elliptic heat equation in a Carnot group~$\G$ and the gradient flows of the relative entropy functional in the Wasserstein space of probability measures on~$\G$. Our result completely answers a question left open in a previous paper by N.~Juillet, where the same correspondence was proved for $\G=\mathbb{H}^n$, the $n$-dimensional Heisenberg group.   
\end{abstract}

\maketitle

\section{Introduction}

Since the pioneering works~\cites{JKO98,O01} and the monograph~\cite{AGS08}, in the last twenty years there has been an increasing interest in the study of the relation between evolution equations and gradient flows of energy functionals in a large variety of different frameworks, see~\cites{AGS14,AS07,CM14,E10,EM14,FSS10,G10,GKO13,GO12,KL09,M11,O09,OS09,S07,V09}. 

The prominent case in the literature is represented by the connection between the heat equation and the relative entropy functional. It is well-known that the heat equation 
\begin{equation}\label{intro_eq:heat_eq}\tag{\textbf{HE}}
\begin{cases}
\de_t u_t=\Delta u_t & \text{in}\ (0,+\infty)\times\R^n,\\
u_0=\bar{u}\in L^2(\R^n) & \text{on}\ \set*{0}\times\R^n,
\end{cases}
\end{equation} 
can be seen as the gradient flow in $L^2(\R^n)$ of the Dirichlet energy $\mathsf{D}(u)=\int_{\R^n}|\nabla u|^2\ dx$ accordingly to the general approach introduced in~\cite{B73}. If the initial datum $\bar u\in L^2(\R^n)$ is such that $\mu_0=\bar{u}\,\leb^n\in\prob_2(\R^n)$, where 
\begin{equation*}
\prob_2(\R^n)=\set*{\mu\in\prob(\R^n) : \int_{\R^n} |x|^2\ d\mu(x)<+\infty},
\end{equation*}
then the solution $(u_t)_{t\ge0}$ of~\eqref{intro_eq:heat_eq} induces a curve $(\mu_t)_{t\ge0}\subset\prob_2(\R^n)$, $\mu_t=u_t\,\leb^n$. If we endow the set $\prob_2(\R^n)$ with its usual Wasserstein distance~$\W$, then the curve $(\mu_t)_{t\ge0}$ is locally absolutely continuous with locally integrable squared $\W$-derivative in $[0,+\infty)$. On the one hand, since $(u_t)_{t\ge0}$ satisfies~\eqref{intro_eq:heat_eq}, the curve $(\mu_t)_{t\ge0}$ naturally solves in the weak sense the continuity equation
\begin{equation}\label{intro_eq:CE}\tag{\textbf{CE}}
\begin{cases}
\de_t\mu_t+\diverg(v_t\mu_t)=0 & \text{in}\ (0,+\infty)\times\R^n,\\
\mu_0=\bar{u}\,\leb^n & \text{on}\ \set*{0}\times\R^n,
\end{cases}
\end{equation} 
where the velocity vector field $(v_t)_{t\ge0}$ is given by $v_t=-{\nabla u_t}/{u_t}$. On the other hand, the \emph{relative entropy}
\begin{equation*}
\ent(\mu)=\int_{\R^n} u\log u\ dx, 
\qquad
\text{for}\ \mu=u\,\leb^n\in\prob_2(\R^n),
\end{equation*}
computed along a curve $(\mu_t)_{t\ge0}\subset\prob_2(\R^n)$ satisfying~\eqref{intro_eq:CE} for a given velocity field $(v_t)_{t\ge0}$ is such that 
\begin{equation*}
\frac{d}{dt}\ent(\mu_t)=-\int_{\R^n} (\log u_t+1)\,\diverg(v_t u_t)\ dx=\int_{\R^n}\scalar*{v_t,\frac{\nabla u_t}{u_t}}\ d\mu_t.
\end{equation*} 
In analogy with the Hilbertian case, but using Otto's calculus~\cite{O01} in the interpretation of the right hand side, one says that $(\mu_t)_{t\ge0}$ is a gradient flow of the entropy in $(\prob_2(\R^n),\W)$ if and only if the curve $t\mapsto\ent(\mu_t)$ has maximal dissipation rate. This happens if and only if $v_t=-{\nabla u_t}/{u_t}$, i.e.\ when $(u_t)_{t\ge0}$ satisfies~\eqref{intro_eq:heat_eq}.  

Although not fully rigorous, the argument presented above contains all the key tools needed to establish the correspondence between the heat flow and the entropy flow in a general metric measure space $(X,\dd,\m)$. Both the heat equation~\eqref{intro_eq:heat_eq} and the continuity equation~\eqref{intro_eq:CE} have been adequately understood in this general context. For~\eqref{intro_eq:heat_eq}, one relaxes the Dirichlet energy to the so-called \emph{Cheeger energy}
\begin{equation*}
\ch(u)=\inf\set*{\liminf_n\int_X |\D u_n|^2\ d\m : u_n\to u\ \text{in}\ L^2(X,\dd,\m),\ u_n\in\Lip(X)},
\end{equation*}
where the \emph{local Lipschitz constant} $|\D u|(x)=\limsup\limits_{y\to x}\frac{|u(y)-u(x)|}{\dd(x,y)}$ of $u\in\Lip(X)$ plays the same role of the absolute value of the gradient in~$\R^n$. It can be shown that the naturally associated Sobolev space $W^{1,2}(X,\dd,\m)$ is a Banach space (not Hilbertian in general) and that the functional~$\ch$ is convex, so that~\eqref{intro_eq:heat_eq} can still be interpreted as its gradient flow in the Hilbert space $L^2(X,\dd,\m)$, see~\cite{AGS14}. For~\eqref{intro_eq:CE}, one introduces an appropriate space $\textsf{S}^2(X)$ of test functions in $W^{1,2}(X,\dd,\m)$ and says that $(\mu_t)_{t>0}$ satisfies the continuity equation with respect to a family of maps $(L_t)_{t>0}\colon\mathsf{S}^2(X)\to\R$ if $t\mapsto\int_X f\ d\mu_t$ is absolutely continuous for every $f\in\mathsf{S}^2(X)$ with $\frac{d}{dt}\int_X f\ d\mu_t=L_t(f)$ for a.e.\ $t>0$, see~\cite{GH15}.

The notion of gradient flow of the entropy functional 
\begin{equation*}
\ent_\m(\mu)=\int_X \rho\log\rho\ d\m, 
\qquad
\text{for}\ \mu=\rho\m\in\prob_2(X),
\end{equation*}
in the Wasserstein space $(\prob_2(X),\W)$ can be rigorously defined by requiring the validity of the following 
sharp \textit{energy dissipation inequality}
\begin{equation*}
\ent_\m(\mu_t)+\frac{1}{2}\int_s^t|\dot{\mu}_r|^2\ dr+\frac{1}{2}\int_s^t|\D^-\ent_\m|^2(\mu_r)\ dr\le \ent_\m(\mu_s)
\end{equation*}
for all $0\le s\le t$, where $t\mapsto|\dot{\mu}_t|$ is the $\W$-derivative of the curve $(\mu_t)_{t>0}\subset\prob_2(X)$ and 
\begin{equation*}
|\D^-\ent_\m|(\mu)=\limsup\limits_{\nu\to\mu}\max\set*{\frac{\ent_\m(\mu)-\ent_\m(\nu)}{\W(\mu,\nu)},0}
\end{equation*}
is the so-called \emph{descending slope} of the entropy. Note that this definition is consistent with the standard one in Hilbert spaces, 
since $u'(t)=-\nabla E(u(t))$ is equivalent to 
\begin{equation*}
\frac{1}{2}|u'|^2(t)+\frac{1}{2}|\nabla E(u(t))|^2\le-\frac{d}{dt} E(u(t))
\end{equation*}
by combining the chain rule with Cauchy--Schwarz and Young's inequalities.

As pointed out in~\cites{G10,AGS14}, this abstract approach provides a complete equivalence between the two gradient flows if the entropy is \emph{$K$-convex along geodesics} in $(\prob_2(X),\W)$ for some $K\in\R$, that is, if
\begin{equation}\label{intro_eq:K-convexity}\tag{\textbf{K}}
\ent_\m(\mu_t)\le(1-t)\ent_\m(\mu_0)+t\,\ent_\m(\mu_1)-\frac{K}{2}t(1-t)\W(\mu_0,\mu_1)^2,
\qquad
t\in[0,1],
\end{equation}
holds for a class of constant speed geodesics $(\mu_t)_{t\in[0,1]}\subset\prob_2(X)$ sufficiently large to join any pair of points
in~$\prob_2(X)$. The $K$-convexity (also known as \emph{displacement convexity}) of the entropy heavily depends on the structure of $(X,\dd,\m)$ and encodes a precise information about the ambient space: if $X$ is a Riemannian manifold, then~\eqref{intro_eq:K-convexity} is valid if and only if the Ricci curvature satisfies $\mathrm{Ric}\ge K$, see~\cite{vRS05}. For this reason, if property~\eqref{intro_eq:K-convexity} holds, then $(X,\dd,\m)$ is called a space with \emph{generalized Ricci curvature} bounded from below, 
or simply a $CD(K,\infty)$ space. 

According to this general framework, the correspondence between heat flow and entropy flow has been proved on Riemannian manifolds with Ricci curvature bounded from below, see~\cite{E10}, and on compact \emph{Alexandrov spaces}, see~\cites{GKO13,GO12,O09}. Alexandrov spaces are considered as metric measure spaces with \emph{generalized sectional curvature} bounded from below (a condition stronger than~\eqref{intro_eq:K-convexity}, see~\cite{P11}).

If $(X,\dd,\m)$ is not a $CD(K,\infty)$ space, then the picture is less clear. As stated in~\cite{AGS14}*{Theorem~8.5}, the correspondence between heat flow and entropy flow still holds if the descending slope $|\D^-\ent_\m|$ of the entropy is an upper gradient of~$\ent_\m$ and satisfies a precise lower semicontinuity property, basically equivalent to the equality between $|\D^-\ent_\m|$ and the so-called \emph{Fisher information}. These assumptions are weaker than~\eqref{intro_eq:K-convexity} but not easy to check for a given non-$CD(K,\infty)$ space.

In~\cites{J14,J09}, it was proved that the \emph{Heisenberg group}~$\mathbb{H}^n$ is a non-$CD(K,\infty)$ space in which nevertheless the correspondence between heat flow and gradient flow holds. The Heisenberg group is the simplest non-commutative \emph{Carnot group}. Carnot groups are one of the most studied examples of Carnot--Carathéodory spaces, see~\cites{BLU07,LeD16,M02} and the references therein for an account on this subject. The proof of the correspondence of the two flows in~$\mathbb{H}^n$ presented in~\cite{J14} essentially splits into two parts. The first part shows that a solution of the sub-elliptic heat equation $\de_t u_t+\Delta_{\mathbb{H}^n} u_t=0$ corresponds to a gradient flow of the entropy in the Wasserstein space $(\prob_2(\mathbb{H}^n),\W_{\mathbb{H}^n})$ induced by the Carnot-Carathéodory distance~$\dcc$. The direct computations needed are justified by some precise estimates on the \emph{sub-elliptic heat kernel} in~$\mathbb{H}^n$ given in~\cites{L06,L07}. The second part proves that a gradient flow of the entropy in $(\prob_2(\mathbb{H}^n),\W_{\mathbb{H}^n})$ induces a sub-elliptic heat diffusion in~$\mathbb{H}^n$. The argument is based on a clever regularization of the gradient flow  $(\mu_t)_{t\ge0}$ based on the particular structure of the Lie algebra of~$\mathbb{H}^n$.

An open question arisen in~\cite{J14}*{Remark~5.3} was to extend the correspondence of the two flows to any Carnot group. The aim of the present work is to give a positive answer to this problem. To prove that a solution of the sub-elliptic heat equation corresponds to a gradient flow of the entropy, we essentially follow the same strategy of~\cite{J14}. Since the results of~\cites{L06,L07} are not known for a general Carnot group, we instead rely on the weaker estimates given in~\cite{VSC92} valid in any nilpotent Lie group. To show that a gradient flow of the entropy induces a sub-elliptic heat diffusion, we regularize the gradient flow $(\mu_t)_{t\ge0}$ both in time and space via convolution with smooth kernels. This regularization does not depend on the structure of the Lie algebra of the group, but nevertheless allows us to preserve the key quantities involved, such as the \emph{continuity equation} and the \emph{Fisher information}. In the presentation of the proofs, we also take advantage of a few results taken from the general setting of metric measures spaces developed in~\cite{AGS14} and in the references therein.

The paper is organized as follows. In \cref{sec:preliminaries} we collect the standard definitions and well-known facts that are used throughout the work. The precise statement of our main result is given in \cref{th:main} at the end of this part. In \cref{sec:CE_and_ent_slope} we extend the technical results presented in~\cite{J14}*{Sections~3 and~4} to any Carnot group with minor modifications and we prove that Carnot groups are non-$CD(K,\infty)$ spaces (see \cref{prop:carnot_not_CD}), generalizing the analogous result obtained in~\cite{J09}. Finally, in \cref{sec:proof_of_main_result}, we prove the correspondence of the two flows.  

\section{Preliminaries}
\label{sec:preliminaries}

\subsection{AC curves, entropy and gradient flows}

Let $(X,\dd)$ be a metric space, let $I\subset\R$ be a closed interval and let $p\in[1,+\infty]$. We say that a curve $\gamma\colon I\to X$ belongs to $AC^p(I;(X,d))$ if 
\begin{equation}\label{eq:def_AC_curve}
\dd(\gamma_s,\gamma_t)\le\int_s^t g(r)\ dr 
\qquad
s,t\in I,\ s<t, 
\end{equation} 
for some $g\in L^p(I)$. The space $AC^p_{\rm loc}(I;(X,d))$ is defined analogously. The case $p=1$ corresponds to \emph{absolutely continuous curves} and is simply denoted by $AC(I;(X,d))$. It turns out that, if $\gamma\in AC^p(I;(X,d))$, there is a minimal function $g\in L^p(I)$ satisfying~\eqref{eq:def_AC_curve}, called \emph{metric derivative} of the curve~$\gamma$, which is given by
\begin{equation*}
|\dot{\gamma}_t|:=\lim_{s\to t}\frac{d(\gamma_s,\gamma_t)}{|s-t|}
\qquad
\text{for a.e.}\ t\in I.
\end{equation*} 
See~\cite{AGS08}*{Theorem~1.1.2} for the simple proof. We call $(X,d)$ a \emph{geodesic} metric space if for every $x,y\in X$ there exists a curve $\gamma\colon[0,1]\to X$ such that $\gamma(0)=x$, $\gamma(1)=y$ and
\begin{equation*}
\dd(\gamma_s,\gamma_t)=|s-t|\,\dd(\gamma_0,\gamma_1) 
\qquad
\forall s,t\in[0,1].
\end{equation*}

Let $\R^*=\R\cup\set{-\infty,+\infty}$ and let $f\colon X\to\R^*$ be a function. We define the \emph{effective domain} of~$f$ as
\begin{equation*}
\dom(f):=\set*{x\in X : f(x)\in\R}.
\end{equation*} 
Given $x\in\dom(f)$, we define the \emph{local Lipschitz constant} of~$f$ at~$x$ by
\begin{equation*}
|\D f|(x):=\limsup_{y\to x}\frac{|f(y)-f(x)|}{\dd(x,y)}.
\end{equation*}
The \emph{descending slope} and the \emph{ascending slope} of~$f$ at~$x$ are respectively given by
\begin{equation*}
|\D^-f|(x):=\limsup_{y\to x}\frac{[f(y)-f(x)]^-}{\dd(x,y)},
\qquad
|\D^+f|(x):=\limsup_{y\to x}\frac{[f(y)-f(x)]^+}{\dd(x,y)}.
\end{equation*}
Here $a^+$ and $a^-$ denote the positive and negative part of $a\in\R$ respectively. When $x\in\dom(f)$ is an isolated point of $X$, we set $|\D f|(x)=|\D^-f|(x)=|\D^+f|(x)=0$. By convention, we set $|\D f|(x)=|\D^-f|(x)=|\D^+f|(x)=+\infty$ for all $x\in X\setminus\dom(f)$.

\begin{definition}[Gradient flow]\label{def:metric_GF}
Let $E\colon X\to\R\cup\set*{+\infty}$ be a function. We say that a curve $\gamma\in AC_{\rm loc}([0,+\infty);(X,\dd))$ is a \emph{(metric) gradient flow} of~$E$ starting from $\gamma_0\in\dom(E)$ if the \emph{energy dissipation inequality (EDI)}
\begin{equation}\label{eq:def_EDI}
E(\gamma_t)+\frac{1}{2}\int_s^t|\dot{\gamma}_r|^2\ dr+\frac{1}{2}\int_s^t|\D^-E|^2(\gamma_r)\ dr\le E(\gamma_s)
\end{equation}
holds for all $s,t\ge0$ with $s\le t$. 
\end{definition}

Note that, if $(\gamma_t)_{t\ge0}$ is a gradient flow of~$E$, then $\gamma_t\in\dom(E)$ for all $t\ge0$ and $\gamma\in AC^2_{\rm loc}([0,+\infty);(X,\dd))$ with $t\mapsto|\D^-E|(\gamma_t)\in L^2_{\rm loc}([0,+\infty))$. Moreover, the function $t\mapsto E(\gamma_t)$ is non-increasing on~$[0,+\infty)$ and thus a.e.\ differentiable and locally integrable.

\begin{remark}\label{rem:GF_AC}
As observed in~\cite{AGS14}*{Section~2.5}, if the function $t\mapsto E(\gamma_t)$ is locally absolutely continuous on~$(0,+\infty)$, then~\eqref{eq:def_EDI} holds as an equality by the chain rule and Young's inequality. In this case, \eqref{eq:def_EDI} is also equivalent to 
\begin{equation*}
\frac{d}{dt}E(\gamma_t)=-|\dot{\gamma}_t|^2=-|\D^-E|^2(\gamma_t)
\qquad
\text{for a.e.}\ t>0.
\end{equation*}
\end{remark}

\subsection{Wasserstein space}
\label{subsec:wasserstein_space}

We now briefly recall some properties of the Wasserstein space needed for our purposes. For a more detailed introduction to this topic, we refer the interested reader to~\cite{AG13}*{Section~3}. 

Let $(X,\dd)$ be a Polish space, i.e.\ a complete and separable metric space. We denote by $\prob(X)$ the set of probability Borel measures on~$X$. The \emph{Wasserstein distance} $\W$ between $\mu,\nu\in\prob(X)$ is given by
\begin{equation}\label{eq:def_W_2}
\W(\mu,\nu)^2=\inf\set*{\int_{X\times X} \dd(x,y)^2\ d\pi : \pi\in\Gamma(\mu,\nu)},
\end{equation}
where
\begin{equation}\label{eq:def_coupling}
\Gamma(\mu,\nu):=\set*{\pi\in\prob(X\times X) : (p_1)_\#\pi=\mu,\ (p_2)_\#\pi=\nu}.
\end{equation}
Here $p_i\colon X\times X\to X$, $i=1,2$, are the the canonical projections on the components. As usual, if $\mu\in\prob(X)$ and $T\colon X\to Y$ is a $\mu$-measurable map with values in the topological space~$Y$, the \emph{push-forward measure} $T_\#(\mu)\in\prob(Y)$ is defined by $T_\#(\mu)(B):=\mu(T^{-1}(B))$ for every Borel set $B\subset Y$. The set $\Gamma(\mu,\nu)$ introduced in~\eqref{eq:def_coupling} is call the set of \emph{admissible plans} or \emph{couplings} for the pair~$(\mu,\nu)$. For any Polish space $(X,\dd)$, there exist optimal couplings where the infimum in~\eqref{eq:def_W_2} is achieved.

The function~$\W$ is a distance on the so-called \emph{Wasserstein space} $(\prob_2(X),\W)$, where
\begin{equation*}
\prob_2(X):=\set*{\mu\in\prob(X) : \int_X \dd(x,x_0)^2\ d\mu(x)<+\infty\ 
\text{for some, and thus any,}\ x_0\in X}.
\end{equation*}
The space $(\prob_2(X),\W)$ is Polish. If $(X,\dd)$ is geodesic, then $(\prob_2(X),\W)$ is geodesic as well. Moreover, $\mu_n\xrightarrow{\W}\mu$ if and only if $\mu_n\weakto\mu$ and
\begin{equation*}
\int_X \dd(x,x_0)^2\ d\mu_n(x)\to\int_X \dd(x,x_0)^2\ d\mu(x)\ \qquad\text{for some}\ x_0\in X.
\end{equation*}
As usual, we write $\mu_n\weakto\mu$ if $\int_X\phi\ d\mu_n\to\int_X\phi\ d\mu$ for all $\phi\in C_b(X)$.

\subsection{Relative entropy}

Let $(X,\dd,\m)$ be a metric measure space, where $(X,\dd)$ is a Polish metric space and $\m$ is a non-negative, Borel and $\sigma$-finite measure. We assume that the space $(X,\dd,\m)$ satisfies the following structural assumption: there exist a point $x_0\in X$ and two constants $c_1,c_2>0$ such that 
\begin{equation}\label{eq:assumption_on_measure}
\m\left(\set*{x\in X : \dd(x,x_0)<r}\right)\le c_1 e^{c_2 r^2}.
\end{equation}
The \emph{relative entropy} $\ent_\m\colon\prob_2(X)\to(-\infty,+\infty]$ is defined as
\begin{equation}\label{eq:def_entropy}
\ent_\m(\mu):=
\begin{cases}
\displaystyle\int_X \rho\log\rho\ d\m & \text{if}\ \mu=\rho\m\in\prob_2(X) ,\\[3mm]
+\infty & \text{otherwise}.
\end{cases}
\end{equation}
According to our definition, $\mu\in\dom(\ent_\m)$ implies that $\mu\in\prob_2(X)$ and that the effective domain $\dom(\ent_\m)$ is convex. As pointed out in~\cite{AGS14}*{Section~7.1}, the structural assumption~\eqref{eq:assumption_on_measure} guarantees that in fact $\ent(\mu)>-\infty$ for all $\mu\in\prob_2(X)$.

When $\m\in\prob(X)$, the entropy functional~$\ent_\m$ naturally extends to~$\prob(X)$, is lower semicontinuous with respect to the weak convergence in~$\prob(X)$ and positive by Jensen's inequality. In addition, if $F\colon X\to Y$ is a Borel map, then
\begin{equation}\label{eq:ent_push-forward_formula}
\ent_{F_\#\m}(F_\#\mu)\le\ent_\m(\mu)
\qquad
\text{for all}\ \mu\in\prob(X),
\end{equation}  
with equality if~$F$ is injective, see~\cite{AGS08}*{Lemma~9.4.5}.

When $\m(X)=+\infty$, if we set $\mathfrak{n}:=e^{-c\,\dd(\cdot,x_0)^2}\m$ for some $x_0\in X$, where $c>0$ is chosen so that $\mathfrak{n}(X)<+\infty$ (note that the existence of such $c>0$ is guaranteed by~\eqref{eq:assumption_on_measure}), then we obtain the useful formula
\begin{equation}\label{eq:ent_useful_formula}
\ent_\m(\mu)=\ent_{\mathfrak{n}}(\mu)-c\int_X\dd(x,x_0)^2\ d\mu
\qquad
\text{for all}\ \mu\in\prob_2(X).
\end{equation}
This shows that $\ent_\m$ is lower semicontinuous in $(\prob_2(X),\W)$.

\subsection{Carnot groups}
\label{subsec:carnot_groups}

Let $\G$ be a Carnot group, i.e.\ a connected, simply connected and nilpotent Lie group whose Lie algebra $\mathfrak{g}$ of left-invariant vector fields has dimension $n$ and admits a stratification of step $\kappa$, 
\begin{equation*}
\mathfrak{g}=V_1\oplus V_2\oplus\cdots\oplus V_\kappa
\end{equation*}
with
\begin{equation*}
V_i=[V_1,V_{i-1}]\quad \text{for } i=1,\dots,\kappa, \qquad [V_1,V_\kappa]=\set{0}. 
\end{equation*}
We set $m_i=\dim(V_i)$ and $h_i=m_1+\dots+m_i$ for $i=1,\dots,\kappa$, with $h_0=0$ and $h_\kappa=n$. We fix an adapted basis of $\mathfrak{g}$, i.e.\ a basis $X_1,\dots,X_n$ such that
\begin{equation*}
X_{h_{i-1}+1},\dots,X_{h_i}\ \text{is a basis of}\ V_i,\qquad i=1,\dots,\kappa.
\end{equation*}
Using exponential coordinates, we can identify~$\G$ with $\R^n$ endowed with the group law determined by the Campbell--Hausdorff formula (in particular, the identity $e\in\G$ corresponds to $0\in\R^n$ and $x^{-1}=-x$ for $x\in\G$). It is not restrictive to assume that $X_i(0)=\mathrm{e}_i$ for any $i=1,\dots,n$; therefore, by left-invariance, for any $x\in\G$ we get
\begin{equation}\label{eq:X_i_dleft_transl}
X_i(x)=dl_x\mathrm{e}_i, \qquad i=1,\dots,n,
\end{equation}
where $l_x\colon\G\to\G$ is the left-translation by $x\in\G$, i.e.\ $l_x(y)=xy$ for any $y\in\G$. We endow $\mathfrak{g}$ with the left-invariant Riemannian metric $\scalar*{\cdot,\cdot}_\G$ that makes the basis $X_1,\dots,X_n$ orthonormal. For any $i=1,\dots,n$, we define the gradient with respect to the layer $V_i$ as
\begin{equation*}
\nabla_{V_i} f:=\sum_{j=h_{i-1}+1}^{h_i} (X_j f) \, X_j\in V_i.
\end{equation*}
We let $H\G\subset T\G$ be the \emph{horizontal tangent bundle} of the group~$\G$, i.e.\ the left-invariant sub-bundle of the tangent bundle~$T\G$ such that $H_e\G=\set*{X(0) : X\in V_1}$. We use the distinguished notation $\nabla_\G:=\nabla_{V_1}$ for the \emph{horizontal gradient}.

For any $i=1,\dots,n$, we define the degree $d(i)\in\set*{1,\dots,\kappa}$ of the basis vector field $X_i$ as $d(i)=j$ if and only if $X_i\in V_j$. With this notion, the one-parameter family of group dilations $(\delta_\lambda)_{\lambda\ge0}\colon\G\to\G$ is given by
\begin{equation}\label{eq:def_dilation}
\delta_\lambda(x)=\delta_\lambda(x_1,\dots,x_n):=(\lambda x_1,\dots,\lambda^{d(i)} x_i,\dots,\lambda^\kappa x_n) \qquad\text{for all}\ x\in\G.
\end{equation}
The Haar measure of the group $\G$ coincides with the $n$-dimensional Lebesgue measure~$\leb^n$ and has the homogeneity property $\leb^n(\delta_\lambda(E))=\lambda^Q\leb^n(E)$, where the integer $Q=\sum_{i=1}^\kappa i\dim(V_i)$ is the \emph{homogeneous dimension} of the group. 

We endow the group $\G$ with the canonical Carnot--Carathéodory structure induced by~$H\G$. We say that a Lipschitz curve $\gamma\colon[0,1]\to\G$ is a \emph{horizontal curve} if $\dot{\gamma}(t)\in H_{\gamma(t)}\G$ for a.e.\ $t\in[0,1]$. The \emph{Carnot--Carathéodory distance} between $x,y\in\G$ is then defined as
\begin{equation*}
\dcc(x,y)=\inf\set*{\int_0^1\|\dot{\gamma}(t)\|_\G\ dt : \gamma\ \text{is horizontal},\ \gamma(0)=x,\ \gamma(1)=y}.
\end{equation*}
By Chow--Rashevskii's Theorem, the function $\dcc$ is in fact a distance, which is also left-invariant and homogeneous with respect to the dilations defined in~\eqref{eq:def_dilation}, precisely $\dcc(zx,zy)=\dcc(x,y)$ and $\dcc(\delta_\lambda(x),\delta_\lambda(y))=\lambda\dcc(x,y)$ for all $x,y,z\in\G$ and $\lambda\ge0$. The resulting metric space $(\G,\dcc)$ is a Polish geodesic space. We let $\Bcc(x,r)$ be the $\dcc$-ball centred at $x\in\G$ of radius $r>0$. Note that $\leb^n(\Bcc(x,r))=c_n r^Q$, where $c_n=\leb^n(\Bcc(0,1))$. In particular, the metric measure space $(\G,\dcc,\leb^n)$ satisfies the structural assumption~\eqref{eq:assumption_on_measure}. 

Let us write $x=(\tilde{x}_1,\dots,\tilde{x}_\kappa)$, where $\tilde{x}_i:=(x_{h_{i-1}+1},\dots,x_{h_i})$ for $i=1,\dots,\kappa$. As proved in~\cite{FSSC03}*{Theorem~5.1}, there exist suitable constants $c_1=1$, $c_2,\dots,c_k\in(0,1)$ depending only on the group structure of~$\G$ such that
\begin{equation}\label{eq:def_box_norm}
\dd_\infty(x,0):=\max\set*{c_i\abs*{\tilde{x}_i}_{\R^{m_i}}^{1/i} : i=1,\dots,\kappa}, \qquad x\in\G,
\end{equation}
induces a left-invariant and homogeneous distance $\dd_\infty(x,y):=\dd_\infty(y^{-1}x,0)$, $x,y\in\G$, which is equivalent to~$\dcc$.

Let $1\le p<+\infty$ and let $\Omega\subset\R^n$ be an open set. The \emph{horizontal Sobolev space}
\begin{equation}\label{eq:def_horiz_sobolev_space}
W^{1,p}_\G(\Omega):=\set*{u\in L^p(\Omega) :  X_i u\in L^p(\Omega),\ i=1,\dots,m_1}
\end{equation}
endowed with the norm
\begin{equation*}
\|u\|_{W^{1,p}_\G(\Omega)}:=\|u\|_{L^p(\Omega)}+\sum_{i=1}^{m_1}\|X_i u\|_{L^p(\Omega)}
\end{equation*}
is a reflexive Banach space, see~\cite{FSSC96}*{Proposition~1.1.2}. By~\cite{FSSC96}*{Theorem~1.2.3}, the set $C^\infty(\Omega)\cap W^{1,p}_\G(\Omega)$ is dense in $W^{1,p}_\G(\Omega)$. By a standard cut-off argument, we get that $C^\infty_c(\R^n)$ is dense in $W^{1,p}_\G(\R^n)$.

\subsection{Riemannian approximation}
\label{subsec:riemannian_approx}

The metric space $(\G,\dcc)$ can be seen as the limit in the \emph{pointed Gromov--Hausdorff sense} as $\eps\to0$ of a family of Riemannian manifolds $\set*{(\G_\eps,\deps)}_{\eps>0}$ defined as follows, see~\cite{CDPT07}*{Theorem~2.12}. For any $\eps>0$, we define the Riemannian approximation $(\G_\eps,\deps)$ of the Carnot group $(\G,\dcc)$ as the manifold $\R^n$ endowed with the Riemannian metric $g_\eps(\cdot,\cdot)\equiv\scalar*{\cdot,\cdot}_\eps$ that makes orthonormal the vector fields $\eps^{d(i)-1}X_i$, $i=1,\dots,n$, i.e.\ such that
\begin{equation*}
\scalar*{X_i,X_j}_\eps=\eps^{2-d(i)-d(j)}\delta_{ij}, \qquad i,j=1,\dots,n.
\end{equation*}
We let $\deps$ be the Riemannian distance induced by the metric~$g_\eps$. Note that $\deps$ is left-invariant and satisfies $\deps\le\dcc$ for all $\eps>0$. For any $\eps>0$, the $\eps$-Riemannian gradient is defined as  
\begin{equation*}
\nabla_\eps f=\sum_{i=1}^n\eps^{2(d(i)-1)}(X_i f) \, X_i=\sum_{i=1}^\kappa\eps^{2(i-1)}\nabla_{V_i}f.
\end{equation*}
By~\eqref{eq:X_i_dleft_transl}, we get that
\begin{equation*}
g_\eps(x)=(dl_x)^T D_\eps\,(dl_x), \qquad x\in\G,
\end{equation*} 
where $D_\eps$ is the diagonal block matrix given by
\begin{equation*}
D_\eps=\diag(\mathbf{1}_{m_1},\eps^{-2}\mathbf{1}_{m_2},\dots,\eps^{-2(i-1)}\mathbf{1}_{m_i},\dots,\eps^{-2(\kappa-1)}\mathbf{1}_{m_\kappa}).
\end{equation*}
As a consequence, the Riemannian volume element is given by 
\begin{equation*}
\vol_\eps=\sqrt{\det g_\eps}\,dx_1\wedge\dots\wedge dx_n=\eps^{n-Q}\leb^n.
\end{equation*}
We remark that, for each $\eps>0$, the $n$-dimensional Riemannian manifold $(\G_\eps,\deps)$ has Ricci curvature bounded from below. More precisely, there exists a constant $K>0$, depending only on the Carnot group $\G$, such that
\begin{equation}\label{eq:ricci_bounded_below}
\ric_\eps\ge-K\eps^{-2}
\qquad
\text{for all}\ \eps>0.
\end{equation}
By scaling invariance, the proof of inequality~\eqref{eq:ricci_bounded_below} can be reduced to the case $\eps=1$, which in turn is a direct consequence of~\cite{M76}*{Lemma~1.1}.

In the sequel, we will consider the metric measure space $(\G_\eps,\deps,\leb^n)$, i.e.\ the Riemannian manifold $(\G_\eps,\deps,\textrm{vol}_\eps)$ with a rescaled volume measure. Both these two spaces satisfy the structural assumption~\eqref{eq:assumption_on_measure}. Moreover, we have
\begin{equation}\label{eq:entropy_riem_vs_leb}
\ent_{\textrm{vol}_\eps}(\mu)=\ent(\mu)+\log(\eps^{Q-n})
\qquad
\text{for all}\ \eps>0.
\end{equation}
Here and in the following, $\ent$ denotes the entropy with respect to the reference measure~$\leb^n$.

\subsection{Sub-elliptic heat equation}

We let $\Delta_\G=\sum_{i=1}^{m_1} X_i^2$ be the so-called \emph{sub-Laplacian operator}. Since the horizontal vector fields $X_1,\dots,X_{h_1}$ satisfy H\"ormander's condition, by H\"ormander's theorem the \emph{sub-elliptic heat operator} $\de_t-\Delta_\G$ is \emph{hypoelliptic}, meaning that its fundamental solution $\h\colon(0,+\infty)\times\G\to(0,+\infty)$, $\h_t(x)=\h(t,x)$, the so-called \emph{heat kernel}, is smooth. In the following result, we collect some properties of the heat kernel that will be used in the sequel. We refer the reader to~\cite{VSC92}*{Chapter~IV} and to the references therein for the proof.

\begin{theorem}[Properties of the heat kernel]\label{th:property_heat_kernel}
The heat kernel $\h\colon(0,+\infty)\times\G\to(0,+\infty)$ satisfies the following properties:
\begin{enumerate}[(i)]
\item $\h_t(x^{-1})=\h_t(x)$ for any $(t,x)\in(0,+\infty)\times\G$;
\item $\h_{\lambda^2 t}(\delta_\lambda(x))=\lambda^{-Q}\h_t(x)$ for any $\lambda>0$ and $(t,x)\in(0,+\infty)\times\G$;
\item $\int_\G\h_t\ dx=1$ for any $t>0$; 
\item there exists $C>0$, depending only on $\G$, such that
\begin{equation}\label{eq:heat_estimate_above}
\h_t(x)\le Ct^{-Q/2}\exp\left(-\frac{\dcc(x,0)^2}{4t}\right)
\qquad
\forall (t,x)\in(0,+\infty)\times\G;
\end{equation}
\item for any $\eps>0$, there exists $C_\eps>0$ such that
\begin{equation}\label{eq:heat_estimate_below}
\h_t(x)\ge C_\eps t^{-Q/2}\exp\left(-\frac{\dcc(x,0)^2}{4(1-\eps)t}\right)
\qquad
\forall (t,x)\in(0,+\infty)\times\G;
\end{equation}
\item for every $j,l\in\N$ and $\eps>0$, there exists $C_\eps(j,l)>0$ such that
\begin{equation}\label{eq:heat_estimate_derivatives}
|(\de_t)^l X_{i_1}\cdots X_{i_j}\h_t(x)|\le C_\eps(j,l)t^{-\frac{Q+j+2l}{2}}\exp\left(-\frac{\dcc(x,0)^2}{4(1+\eps)t}\right)
\qquad
\forall (t,x)\in(0,+\infty)\times\G,
\end{equation}
where $X_{i_1}\cdots X_{i_j}\in V_1$.
\end{enumerate}
\end{theorem}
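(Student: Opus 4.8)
The plan is to follow the classical theory of the heat semigroup of a sub-Laplacian on a group of polynomial volume growth, as developed in~\cite{VSC92}*{Chapter~IV}. Let $P_t=e^{t\Delta_\G}$ be the heat semigroup on $L^2(\G,\leb^n)$ generated by the (negative self-adjoint) sub-Laplacian; left-invariance of $\Delta_\G$ forces $P_t$ to act by convolution, so there is a measurable kernel $\h_t\geq0$ with $P_tf(x)=\int_\G f(xz)\,\h_t(z)\,dz$, and the smoothness and strict positivity of $(t,z)\mapsto\h_t(z)$ are exactly H\"ormander's hypoellipticity theorem together with the strong maximum principle (using that $X_1,\dots,X_{m_1}$ satisfy H\"ormander's condition). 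Items (i)--(iii) are then soft. For (i), nilpotency of $\G$ yields unimodularity, hence $X_i^*=-X_i$ on $L^2(\leb^n)$ and $\Delta_\G=\Delta_\G^*$, so $P_t$ has a symmetric real kernel; written in terms of the convolution kernel this is $\h_t(z)=\h_t(z^{-1})$, and $z^{-1}=-z$ in exponential coordinates. For (ii), the identity $X_i(f\circ\delta_\lambda)=\lambda\,(X_if)\circ\delta_\lambda$ for $X_i\in V_1$ gives $\Delta_\G(f\circ\delta_\lambda)=\lambda^2(\Delta_\G f)\circ\delta_\lambda$, so $(t,z)\mapsto\lambda^Q\h_{\lambda^2t}(\delta_\lambda z)$ is again the kernel of $e^{t\Delta_\G}$; uniqueness, with the factor $\lambda^Q$ dictated by $\leb^n(\delta_\lambda E)=\lambda^Q\leb^n(E)$, gives the stated scaling. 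For (iii), $P_t$ is sub-Markovian so $\int_\G\h_t\,dx\leq1$, and conservativeness follows from the polynomial growth $\leb^n(\Bcc(0,r))=c_nr^Q$ via Grigor'yan's volume test $\int^{+\infty}r\,(\log\leb^n(\Bcc(0,r)))^{-1}\,dr=+\infty$ (applicable since the Dirichlet form $\int_\G|\nabla_\G f|^2\,dx$ is strongly local).

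The analytic heart is (iv)--(v). For the on-diagonal estimate, $\leb^n(\Bcc(x,r))=c_nr^Q$ yields a Faber--Krahn / Nash inequality with exponent $Q$ for the form $\int_\G|\nabla_\G f|^2\,dx$, and Varopoulos' equivalence turns this into the ultracontractive bound $\|P_t\|_{L^1\to L^\infty}\leq Ct^{-Q/2}$; by (i) and the semigroup property $\h_t$ attains its supremum at $e$, so $\h_t(x)\leq\h_t(e)\leq Ct^{-Q/2}$. To obtain the full Gaussian upper bound with the sharp constant $4$, the plan is Davies' exponential-perturbation method: for bounded Lipschitz $\psi$ with $|\nabla_\G\psi|\leq1$ one controls the twisted semigroup $e^{-\psi}P_te^{\psi}$, derives an $L^2$ Davies--Gaffney-type off-diagonal decay, and upgrades it to a pointwise bound using the doubling property; the point---and the reason the denominator is exactly $4t$---is that the intrinsic distance of the form $\int_\G|\nabla_\G f|^2$ coincides with $\dcc$, since $\dcc$ is a length distance with $|\nabla_\G\dcc(\cdot,0)|\leq1$ a.e. For the matching lower bound (v), $(\G,\dcc,\leb^n)$ is doubling and supports Jerison's $(1,1)$-Poincar\'e inequality, so by the Grigor'yan--Saloff-Coste characterization the scale-invariant parabolic Harnack inequality holds; this gives the near-diagonal bound $\h_t(x)\geq c\,t^{-Q/2}$ for $\dcc(x,0)\leq\sqrt t$, and a chaining argument along a near-minimizing horizontal curve from $e$ to $x$, subdivided into $\simeq\dcc(x,0)^2/t$ steps of size $\simeq\sqrt{t/k}$, propagates it to the Gaussian lower bound with the arbitrarily small loss $(1-\eps)$.

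Finally, for (vi) the plan is to combine (iv) with interior parabolic regularity for the hypoelliptic operator $\de_t-\Delta_\G$: on the parabolic cylinder centred at $(t,x)$ of radius $\tfrac12\sqrt t$ (with $\dcc$ in space, $|\cdot|^{1/2}$ in time), the sub-elliptic a priori estimates of Rothschild--Stein bound $|(\de_t)^lX_{i_1}\cdots X_{i_j}\h_t(x)|$ by a constant times $(\sqrt t)^{-(j+2l)}\sup_{\mathrm{cyl}}\h$, and rescaling by $\delta_{\sqrt t}$ via (ii) turns this into $t^{-(Q+j+2l)/2}$ times a dimensionless supremum, while the oscillation $O(\sqrt t)$ of $\dcc(\cdot,0)$ on the cylinder degrades the Gaussian factor $\exp(-\dcc^2/4t)$ from (iv) into $\exp(-\dcc(x,0)^2/(4(1+\eps)t))$. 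The main obstacle is the sharp constant in (iv): ultracontractivity and a rough Gaussian bound are essentially formal, but getting $4t$ instead of $(4-\eps)t$ in the exponent requires identifying the intrinsic metric of the Dirichlet form with $\dcc$ and running Davies' method quantitatively---this is the substantive part of~\cite{VSC92}*{Chapter~IV}---whereas, granted (iv)--(v), the remaining items reduce to scaling, self-adjointness, volume growth and standard sub-elliptic regularity.
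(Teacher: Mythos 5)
Your proposal is correct and takes essentially the same approach as the paper, which gives no proof of \cref{th:property_heat_kernel} at all and simply refers the reader to \cite{VSC92}*{Chapter~IV}; your outline (self-adjointness for (i), homogeneity of $\Delta_\G$ for (ii), volume growth for (iii), Nash/Faber--Krahn plus Davies' perturbation for (iv), Harnack plus chaining for (v), sub-elliptic interior estimates plus scaling for (vi)) is a faithful reconstruction of the arguments collected there. The only slip is cosmetic: the ``rough'' Gaussian upper bound that comes for free from Davies' method has denominator $4(1+\eps)t$, not $(4-\eps)t$, so the substantive point in (iv) is removing the factor $(1+\eps)$, exactly as you say elsewhere.
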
 

Given $\rho\in L^1(\G)$, the function 
\begin{equation}\label{eq:sol_heat_diff}
\rho_t(x)=(\rho\star\h_t)(x)=\int_\G \h_t(y^{-1} x)\, \rho(y)\ dy,
\qquad
(t,x)\in(0,+\infty)\times\G,
\end{equation}
is smooth and is a solution of the heat diffusion problem
\begin{equation}\label{eq:heat_diffusion}
\begin{cases}
\de_t\rho_t=\Delta_\G\rho_t &\text{in}\ (0,+\infty)\times\G,\\
\rho_0=\rho, &\text{on}\ \{0\}\times\G.
\end{cases}
\end{equation}
The initial datum is assumed in the $L^1$-sense, i.e.\ $\lim\limits_{t\to0}\rho_t=\rho$ in $L^1(\G)$. As a consequence of the properties of the heat kernel, if $\rho\ge0$ then the solution $(\rho_t)_{t\ge0}$ in~\eqref{eq:sol_heat_diff} is everywhere positive and satisfies 
\begin{equation*}
\int_\G\rho_t(x)\ dx=\|\rho\|_{L^1(\mathbb{G})}
\qquad
\forall t>0.
\end{equation*}
In addition, if $\rho\leb^n\in\prob_2(\G)$ then $(\rho_t\leb^n)_{t\ge 0}\subset\prob_2(X)$. Indeed, by~\eqref{eq:heat_estimate_above}, we have
\begin{equation*}
C_t:=\int_\G\dcc(x,0)^2\,\h_t(x)\ dx<+\infty
\qquad
\forall t>0.
\end{equation*} 
Thus, by triangular inequality, we have
\begin{equation*}
(\dcc(\cdot,0)^2\star\h_t)(x)
=\int_\G\dcc(xy^{-1},0)^2\,\h_t(y)\ dy
\le2\,\dcc(x,0)^2+2C_t,
\end{equation*}
so that, for all $t>0$, we get
\begin{equation}\label{eq:heat_second_moment}
\int_\G\dcc(x,0)^2\,\rho_t(x)\ dx
=\int_\G(\dcc(\cdot,0)^2\star\h_t)(x)\,\rho(x)\ dx
\le2\int_\G\dcc(x,0)^2\,\rho(x)\ dx+2C_t.
\end{equation}

\subsection{Main result}

We are now ready to state the main result of the paper. The proof is given in \cref{sec:proof_of_main_result} and deals with the two parts of the statement separately. 

\begin{theorem}\label{th:main}
Let $(\G,\dcc,\leb^n)$ be a Carnot group and let $\rho_0\in L^1(\G)$ be such that $\mu_0=\rho_0\leb^n\in\dom(\ent)$. If $(\rho_t)_{t\ge0}$ solves the sub-elliptic heat equation $\de_t\rho_t=\Delta_\G\rho_t$ with initial datum $\rho_0$, then $\mu_t=\rho_t\leb^n$ is a gradient flow of $\ent$ in $(\prob_2(\G),\W_\G)$ starting from~$\mu_0$.

Conversely, if $(\mu_t)_{t\ge0}$ is a gradient flow of $\ent$ in $(\prob_2(\G),\W_\G)$ starting from~$\mu_0$, then  $\mu_t=\rho_t\leb^n$ for all $t\ge0$ and $(\rho_t)_{t\ge0}$ solves the sub-elliptic heat equation $\de_t\rho_t=\Delta_\G\rho_t$ with initial datum $\rho_0$.
\end{theorem}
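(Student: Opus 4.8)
The plan is to establish the two implications separately, following the metric-measure-space machinery of \cite{AGS14} combined with the heat-kernel estimates of \cref{th:property_heat_kernel}.

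For the first implication (heat flow $\Rightarrow$ gradient flow), I would start from a solution $(\rho_t)_{t\ge0}$ of $\de_t\rho_t = \Delta_\G\rho_t$ with $\mu_0 = \rho_0\leb^n \in \dom(\ent)$, given by convolution with the heat kernel as in~\eqref{eq:sol_heat_diff}. The strategy is to verify the energy dissipation inequality~\eqref{eq:def_EDI} for $E = \ent$ directly, which by \cref{rem:GF_AC} amounts to checking that $t\mapsto\ent(\mu_t)$ is locally absolutely continuous on $(0,+\infty)$ with
\begin{equation*}
\frac{d}{dt}\ent(\mu_t) = -|\dot\mu_t|^2 = -|\D^-\ent|^2(\mu_t)\qquad\text{for a.e.\ }t>0.
\end{equation*}
Concretely: first, the heat-kernel derivative estimates~\eqref{eq:heat_estimate_derivatives} together with the lower bound~\eqref{eq:heat_estimate_below} justify differentiating under the integral sign and integrating by parts, yielding the \emph{de Bruijn-type identity}
\begin{equation*}
\frac{d}{dt}\ent(\mu_t) = -\int_\G \frac{|\nabla_\G\rho_t|^2}{\rho_t}\ dx =: -\mathsf{F}(\mu_t),
\end{equation*}
the (horizontal) Fisher information. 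Second, the curve $(\mu_t)$ solves the continuity equation~\eqref{intro_eq:CE} with horizontal velocity $v_t = -\nabla_\G\rho_t/\rho_t$, and by the Benamou--Brenier-type characterization of metric speed in $(\prob_2(\G),\W_\G)$ (valid since $(\G,\dcc)$ is Polish geodesic; here one uses the technical results extended to Carnot groups in \cref{sec:CE_and_ent_slope}) one gets $|\dot\mu_t|^2 \le \int_\G |v_t|_\G^2\ d\mu_t = \mathsf{F}(\mu_t)$. Third, the slope inequality $|\D^-\ent|^2(\mu_t)\le\mathsf{F}(\mu_t)$ — again a statement from \cref{sec:CE_and_ent_slope}, following \cite{J14}*{Section~4}. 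Combining, each of the three terms is squeezed, so~\eqref{eq:def_EDI} holds (with equality). The behaviour as $t\to0$ follows since $\ent$ is lower semicontinuous along $\W_\G$-convergence (by~\eqref{eq:ent_useful_formula}) and $\mu_t\to\mu_0$ in $\W_\G$ by~\eqref{eq:heat_second_moment}, so one passes to the limit $s\to0$ in~\eqref{eq:def_EDI}.

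For the converse implication (gradient flow $\Rightarrow$ heat flow), let $(\mu_t)_{t\ge0}$ be a metric gradient flow of $\ent$ starting from $\mu_0$. The first step is to show $\mu_t = \rho_t\leb^n$ is absolutely continuous with density in a Sobolev-type class for $t>0$: the defining inequality~\eqref{eq:def_EDI} forces $\int_s^t |\D^-\ent|^2(\mu_r)\ dr < +\infty$, and by the identification $|\D^-\ent|^2 = \mathsf{F}$ on $\dom(\ent)$ this gives $\sqrt{\rho_t}\in W^{1,2}_\G$ for a.e.\ $t$. The second, and technically heaviest, step is the regularization: I would mollify $(\mu_t)$ simultaneously in time (against a smooth kernel on $\R$) and in space (against a smooth kernel on $\G$, using left-convolution as in~\eqref{eq:sol_heat_diff}) to obtain smooth curves $(\mu_t^\delta)$ that still satisfy a continuity equation, for which the chain-rule computation of $\frac{d}{dt}\ent$ is rigorous, and which preserve — up to errors vanishing as $\delta\to0$ — the metric speed and the Fisher information. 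This is where the argument departs from \cite{J14}: the mollification must not rely on the special Lie-algebra structure of $\mathbb{H}^n$. Once the regularized curves are under control, the EDI passes to the limit and becomes an equality, so by \cref{rem:GF_AC} we get, for a.e.\ $t$, that the velocity field realizing $|\dot\mu_t|$ must equal $-\nabla_\G\rho_t/\rho_t$ (the Cauchy--Schwarz/Young chain of inequalities is saturated), i.e.\ $(\mu_t)$ solves the continuity equation with this velocity, which is precisely the weak formulation of $\de_t\rho_t = \Delta_\G\rho_t$. Hypoellipticity of $\de_t - \Delta_\G$ then upgrades $(\rho_t)$ to a smooth solution, and uniqueness of the heat flow with $L^1$ initial datum $\rho_0$ identifies it with the convolution~\eqref{eq:sol_heat_diff}; the initial condition $\rho_0$ is recovered from $\mu_t\xrightarrow{\W_\G}\mu_0$.

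The main obstacle I anticipate is the space-time regularization in the converse direction: one must verify that left-convolution on $\G$ interacts correctly with the continuity equation (the horizontal vector fields are left-invariant, which helps, but the velocity field transforms in a way that needs care), that the Fisher information is lower semicontinuous/continuous under this mollification in the relevant sense, and that the doubly-mollified curves remain admissible competitors whose limit recovers the sharp EDI. Controlling the error terms uniformly — so that no dissipation is lost in the limit $\delta\to0$ — is the crux, exactly as the introduction signals when it stresses that the new regularization "allows us to preserve the key quantities involved, such as the continuity equation and the Fisher information."
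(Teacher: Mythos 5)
Your overall strategy coincides with the paper's: the forward direction is proved by squeezing the three terms of the EDI (a de Bruijn-type entropy dissipation identity, the continuity-equation bound on the metric speed, and a slope bound), and the converse by a double space--time mollification that makes the chain rule for the entropy rigorous and then saturates the Cauchy--Schwarz/Young chain to identify the velocity with $-\nabla_\G\rho_t/\rho_t$, followed by hypoellipticity. Two points, however, need repair before the sketch closes.

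First, you invoke ``the identification $|\D^-\ent|^2=\mathsf{F}$ on $\dom(\ent)$'' as if it were an available black box. It is not: in a general Carnot group only the one-sided inequality $\F_\G(\rho)\le|\D^-_\G\ent|^2(\mu)$ holds unconditionally (\cref{prop:slope_ent_G_general}), and the paper explicitly observes that the converse inequality is open under the sole assumption $|\D^-_\G\ent|(\mu)<+\infty$, precisely because non-commutative Carnot groups are not $CD(K,\infty)$ (\cref{prop:carnot_not_CD}) and the lower-semicontinuity criterion of \cite{AGS14} cannot be applied. The converse inequality is proved only under the extra hypothesis $|\D_\eps^-\ent|(\mu)<+\infty$ for the Riemannian approximations (\cref{prop:slope_ent_G_special}, via the HWI inequality with the $\eps$-dependent Ricci lower bound and the choice $\eps=\W_\G(\nu,\mu)^{1/4}$), and along the heat flow this hypothesis must itself be verified by estimating $\int_\G\|\nabla_{V_i}\h_t\|_\G^2/\h_t\,dx$ for \emph{every} layer $V_i$, not just the horizontal one, using \eqref{eq:heat_estimate_below} and \eqref{eq:heat_estimate_derivatives}. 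Your third squeezing step needs exactly this argument; a generic citation to \cref{sec:CE_and_ent_slope} hides the main new difficulty of the theorem. (In the converse direction you only need the unconditional inequality $\F_\G\le|\D^-_\G\ent|^2$ to get the Sobolev regularity of $\rho_t$, so that part is fine as stated.)

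Second, to extend the EDI from $s>0$ to $s=0$ you need $\limsup_{s\to0}\ent(\mu_s)\le\ent(\mu_0)$, i.e.\ \emph{upper} semicontinuity of the entropy at the initial time; lower semicontinuity along $\W_\G$-convergence, which is what you invoke, gives the opposite and useless inequality. The gap is harmless --- $\ent(\rho_0\star\h_s)\le\ent(\rho_0)$ follows from Jensen's inequality applied to the convex function $r\mapsto r\log r$, since $\h_s$ is a probability density --- but as written the limit $s\to0$ does not go through.
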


\section{Continuity equation and slope of the entropy}
\label{sec:CE_and_ent_slope}

\subsection{The Wasserstein space on the approximating Riemannian manifold}

Let $(\prob_2(\G_\eps),\W_\eps)$ be the Wasserstein space introduced in \cref{subsec:wasserstein_space} relative to the metric measure space $(\G_\eps,\deps,\leb^n)$. As observed in \cref{subsec:riemannian_approx}, $(\G_\eps,\deps,\leb^n)$ is an $n$-dimensional Riemannian manifold (with rescaled volume measure) whose Ricci curvature is bounded from below. Here we collect some known results taken from~\cites{E10,V09} concerning the space $(\prob_2(\G_\eps),\W_\eps)$. In the original statements, the canonical reference measure is the Riemannian volume. Keeping in mind that $\textrm{vol}_\eps=\eps^{n-Q}\leb^n$ and the relation~\eqref{eq:entropy_riem_vs_leb}, in our statements each quantity is rescaled accordingly. All time-dependent vector fields appearing in the sequel are tacitly understood to be Borel measurable. 

Let $\mu\in\prob_2(\G_\eps)$ be given. We define the space 
\begin{equation*}
L^2_\eps(\mu)=\set*{\xi\in\mathcal{S}(T\G_\eps) : \int_{\G_\eps}\|\xi\|_\eps^2\ d\mu<+\infty}.
\end{equation*}
Here $\mathcal{S}(T\G_\eps)$ denotes the set of sections of the tangent bundle $T\G_\eps$. Moreover, we define the \emph{`tangent space'} of $(\prob_2(\G_\eps),\W_\eps)$ at~$\mu$ as
\begin{equation*}
\tang_\eps(\mu)=\overline{\set*{\nabla_\eps\phi : \phi\in C^\infty_c(\R^n)}}^{L^2_\eps(\mu)}.
\end{equation*}
The `tangent space' $\tang_\eps(\mu)$ was first introduced in~\cite{O01}. We refer the reader to~\cite{AGS08}*{Chapter~12} and to~\cite{V09}*{Chapters~13 and~15} for a detailed discussion on this space.

Let $\eps>0$ be fixed. Given $I\subset\R$ an open interval and a time-dependent vector field $v^\eps\colon I\times\G_\eps\to T\G_\eps$, $(t,x)\mapsto v^\eps_t(x)\in T_x\G_\eps$, we say that a curve $(\mu_t)_{t\in I}\subset\prob_2(\G_\eps)$ satisfies the \emph{continuity equation} 
\begin{equation}\label{eq:CE_eps}
\de_t\mu_t+\diverg(v^\eps_t\mu_t)=0 \qquad\text{in}\ I\times\G_\eps
\end{equation}
\emph{in the sense of distributions} if 
\begin{equation*}
\int_I\int_{\G_\eps}\|v^\eps_t(x)\|_\eps\,d\mu_t(x)\,dt<+\infty
\end{equation*}
and
\begin{equation*}
\int_I\int_{\G_\eps}\de_t\phi(t,x)+\scalar*{v^\eps_t(x),\nabla_\eps\phi(t,x)}_\eps\,d\mu_t(x)\,dt=0
\qquad
\forall \phi\in C^\infty_c(I\times\R^n).
\end{equation*} 
We can thus state the following result, see~\cite{E10}*{Proposition~2.5} for the proof. Here and in the sequel, the metric derivative in the Wasserstein space $(\prob_2(\G_\eps),\W_\eps)$ of a curve $(\mu_t)_{t\in I}\subset\prob_2(\G_\eps)$ is denoted by~$|\dot{\mu}_t|_\eps$.

\begin{proposition}[Continuity equation in $(\prob_2(\G_\eps),\W_\eps)$]\label{prop:CE_eps}
Let $\eps>0$ be fixed and let $I\subset\R$ be an open interval. If $(\mu_t)_t\in AC^2_{\rm loc}(I;\prob_2(\G_\eps))$, then there exists a time-dependent vector field $v^\eps\colon I\times\G_\eps\to T\G_\eps$ with $t\mapsto\|v^\eps_t\|_{L^2_\eps(\mu_t)}\in L^2_{\rm loc}(I)$ such that
\begin{equation}\label{eq:v_eps}
v^\eps_t\in\tang_\eps(\mu_t) 
\qquad
\text{for a.e.}\ t\in I
\end{equation}
and the continuity equation~\eqref{eq:CE_eps} holds in the sense of distributions. The vector field~$v^\eps_t$ is uniquely determined in $L^2_\eps(\mu_t)$ by~\eqref{eq:CE_eps} and~\eqref{eq:v_eps} for a.e.\ $t\in I$ and we have
\begin{equation*}
\|v^\eps_t\|_{L^2_\eps(\mu_t)}=|\dot{\mu}_t|_\eps
\qquad
\text{for a.e.}\ t\in I.
\end{equation*}

Conversely, if $(\mu_t)_{t\in I}\subset\prob_2(\G_\eps)$ is a curve satisfying~\eqref{eq:CE_eps} for some $(v^\eps_t)_{t\in I}$ such that $t\mapsto\|v^\eps_t\|_{L^2_\eps(\mu_t)}\in L^2_{\rm loc}(I)$, then $(\mu_t)_t\in AC^2_{\rm loc}(I;(\prob_2(\G_\eps),\W_\eps))$ with 
\begin{equation*}
|\dot{\mu}_t|_\eps\le\|v^\eps_t\|_{L^2_\eps(\mu_t)}
\qquad
\text{for a.e.}\ t\in I.
\end{equation*}
\end{proposition}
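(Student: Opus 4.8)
The plan is to treat this as the Riemannian analogue, on the fixed complete manifold $(\G_\eps,\deps)$ (geodesic by Hopf--Rinow, with Ricci curvature bounded below by~\eqref{eq:ricci_bounded_below}), of the Benamou--Brenier characterization of $\W$-absolutely continuous curves, following the scheme of~\cite{AGS08}*{Chapter~8} and~\cite{E10}. For the first implication I would fix $f\in C^\infty_c(\R^n)$ and, for $s<t$, pick an optimal coupling $\pi_{s,t}\in\Gamma(\mu_s,\mu_t)$; joining $\pi_{s,t}$-a.e.\ pair by a $\deps$-geodesic and using Cauchy--Schwarz, one gets that $t\mapsto\int f\,d\mu_t$ is locally absolutely continuous with
\[
\abs*{\tfrac{d}{dt}\textstyle\int_{\G_\eps} f\,d\mu_t}\le|\dot\mu_t|_\eps\,\Big(\textstyle\int_{\G_\eps}\|\nabla_\eps f\|_\eps^2\,d\mu_t\Big)^{1/2}
\qquad\text{for a.e.\ }t\in I.
\]
Running $f$ over a countable family dense in $C^1_{\rm loc}$ (so that $\{\nabla_\eps f\}$ is dense in $\tang_\eps(\mu_t)$ for every $\mu_t$) and using $|\dot\mu_t|_\eps\in L^2_{\rm loc}(I)$, I would obtain a full-measure set of times on which $\nabla_\eps f\mapsto\frac{d}{dt}\int f\,d\mu_t$ is well defined and bounded on $\{\nabla_\eps f\}\subset L^2_\eps(\mu_t)$; the Riesz representation theorem then yields a unique $v^\eps_t\in\tang_\eps(\mu_t)$ with $\frac{d}{dt}\int f\,d\mu_t=\int\scalar*{v^\eps_t,\nabla_\eps f}_\eps\,d\mu_t$ and $\|v^\eps_t\|_{L^2_\eps(\mu_t)}\le|\dot\mu_t|_\eps$. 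A measurable-selection argument makes $t\mapsto v^\eps_t$ Borel with $t\mapsto\|v^\eps_t\|_{L^2_\eps(\mu_t)}\in L^2_{\rm loc}(I)$, and integrating in time gives~\eqref{eq:CE_eps} in the sense of distributions.

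For the converse the idea is to regularize in space and time. I would mollify $\mu_t$ and the momentum field $v^\eps_t\mu_t$ by group convolution on $\G\simeq\R^n$ (harmless since on the relevant compact sets the Euclidean and $\deps$ norms are comparable, and the estimates of~\cref{th:property_heat_kernel} control the tails) and then in~$t$, obtaining smooth $\mu^\delta_t=\rho^\delta_t\leb^n$ and a smooth field $\tilde v^\delta_t$ still satisfying $\de_t\mu^\delta_t+\diverg(\tilde v^\delta_t\mu^\delta_t)=0$, with $\|\tilde v^\delta_t\|_{L^2_\eps(\mu^\delta_t)}$ bounded by the $\delta$-average of $\|v^\eps_r\|_{L^2_\eps(\mu_r)}$ via Jensen's inequality. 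Since $\tilde v^\delta$ is smooth with controlled growth, its flow $(T^\delta_{s,t})$ is globally defined and transports $\mu^\delta_s$ onto $\mu^\delta_t$; testing~\eqref{eq:def_W_2} with the coupling $(\mathrm{id},T^\delta_{s,t})_\#\mu^\delta_s$ and using $\deps(x,T^\delta_{s,t}(x))\le\int_s^t\|\tilde v^\delta_r(T^\delta_{s,r}(x))\|_\eps\,dr$ would give
\[
\W_\eps(\mu^\delta_s,\mu^\delta_t)\le\int_s^t\|\tilde v^\delta_r\|_{L^2_\eps(\mu^\delta_r)}\,dr.
\]
Letting $\delta\to0$ then yields $\W_\eps(\mu_s,\mu_t)\le\int_s^t\|v^\eps_r\|_{L^2_\eps(\mu_r)}\,dr$, hence $(\mu_t)_t\in AC^2_{\rm loc}(I;(\prob_2(\G_\eps),\W_\eps))$ with $|\dot\mu_t|_\eps\le\|v^\eps_t\|_{L^2_\eps(\mu_t)}$ for a.e.\ $t$.

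Applying this converse step to the field produced in the first step gives $|\dot\mu_t|_\eps\le\|v^\eps_t\|_{L^2_\eps(\mu_t)}$, which together with the reverse inequality already obtained yields $\|v^\eps_t\|_{L^2_\eps(\mu_t)}=|\dot\mu_t|_\eps$ a.e. For uniqueness, if $v^\eps_t,\tilde v^\eps_t\in\tang_\eps(\mu_t)$ both satisfy~\eqref{eq:CE_eps}, then $v^\eps_t-\tilde v^\eps_t$ is $L^2_\eps(\mu_t)$-orthogonal to every $\nabla_\eps\phi$ with $\phi\in C^\infty_c(\R^n)$, hence to all of $\tang_\eps(\mu_t)$ by definition of the latter as the closure of such gradients; being itself an element of $\tang_\eps(\mu_t)$, it must vanish.

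I expect the main obstacle to be the regularization in the second step on the non-compact manifold $\G_\eps$: one must mollify measures and vector fields while keeping the continuity equation exact, control the kinetic energy $\int\|\tilde v^\delta_t\|_\eps^2\,d\mu^\delta_t$ in the limit $\delta\to0$, and ensure that the flow $T^\delta_{s,t}$ is globally well defined and the coupling estimate survives the limit — this is precisely where the group structure of~$\G$, the comparability of metrics, and the heat-kernel tail bounds enter. By contrast, the duality/Riesz argument of the first step and the orthogonality argument for uniqueness are routine.
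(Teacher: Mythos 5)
First, a point of comparison: the paper offers no proof of this proposition at all — it is quoted from the literature, with the sentence preceding the statement referring to \cite{E10}*{Proposition~2.5}. So your attempt is effectively being measured against Erbar's argument (itself modelled on \cite{AGS08}*{Theorem~8.3.1}) rather than against anything written in this paper. Your first implication and your uniqueness argument follow that standard route correctly: the doubling-of-variables estimate along optimal couplings, the choice of a single full-measure set of times working for a countable family of test functions, the Riesz representation on the closed subspace $\tang_\eps(\mu_t)$ giving $\|v^\eps_t\|_{L^2_\eps(\mu_t)}\le|\dot{\mu}_t|_\eps$, and the orthogonality argument for uniqueness are all sound.

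The converse implication as you propose it has a genuine gap at the sentence ``since $\tilde v^\delta$ is smooth with controlled growth, its flow $(T^\delta_{s,t})$ is globally defined.'' Smoothness of the mollified field is true, but growth control at infinity is not: Jensen only gives the pointwise bound $\|\tilde v^\delta_t\|_\eps\le\bigl(\eta_\delta\star(\|v^\eps_t\|_\eps^2\mu_t)\,/\,\eta_\delta\star\mu_t\bigr)^{1/2}$, which is an $L^2(\mu^\delta_t)$ estimate and not an $L^\infty$ one, so trajectories of $\tilde v^\delta_t$ may blow up in finite time and the coupling $(\mathrm{id},T^\delta_{s,t})_\#\mu^\delta_s$ need not exist. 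You flag this as the main obstacle, but the proposed mechanism (classical flows of mollified fields) does not by itself overcome it. The clean repair — and precisely the one the authors use when they prove the identical converse statement for $(\prob_2(\G),\W_\G)$ in \cref{prop:CE} — is to drop the regularization entirely and invoke the superposition principle (\cite{B08}*{Theorem~5.8}, or \cite{AGS08}*{Theorem~8.2.1}) on the Riemannian manifold $(\G_\eps,\deps,\leb^n)$: it yields $\nu\in\prob(C(I;\G_\eps))$ concentrated on $AC^2$ integral curves of $v^\eps$ with $(\mathsf{e}_t)_\#\nu=\mu_t$, and testing $\W_\eps$ against the coupling $(\mathsf{e}_s,\mathsf{e}_t)_\#\nu$ gives $\W_\eps(\mu_s,\mu_t)^2\le(t-s)\int_s^t\|v^\eps_r\|^2_{L^2_\eps(\mu_r)}\,dr$ exactly as in your final computation. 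With that substitution your outline closes, including the bootstrap identifying $\|v^\eps_t\|_{L^2_\eps(\mu_t)}$ with $|\dot{\mu}_t|_\eps$.
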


\noindent
We can interpret the time-dependent vector field $(v^\eps_t)_{t\in I}$ given by \cref{prop:CE_eps} as the `tangent vector' of the curve $(\mu_t)_{t\in I}$ in $(\prob_2(\G_\eps),\W_\eps)$. As remarked in~\cite{E10}*{Section~2}, for a.e.\ $t\in I$ the vector field $v^\eps_t$ has minimal $L^2_\eps(\mu_t)$-norm among all time-dependent vector fields satisfying~\eqref{eq:CE_eps}. Moreover, this minimality is equivalent to~\eqref{eq:v_eps}.

In the following result and in the sequel, $|\D_\eps^-\ent|(\mu)$ denotes the descending slope of the entropy~$\ent$ at the point $\mu\in\prob_2(\G_\eps)$ in the Wasserstein space $(\prob_2(\G_\eps),\W_\eps)$.

\begin{proposition}\label{prop:slope_ent_eps}
Let $\eps>0$ be fixed and let $\mu=\rho\leb^n\in\prob_2(\G_\eps)$. The following statements are equivalent: 
\begin{enumerate}[(i)]
\item $|\D_\eps^-\ent|(\mu)<+\infty$;
\item $\rho\in W^{1,1}_{\rm loc}(\G_\eps)$ and $\nabla_\eps\rho=w^\eps\rho$ for some $w^\eps\in L^2_\eps(\mu)$.
\end{enumerate}
In this case, $w^\eps\in\tang_\eps(\mu)$ and $|\D_\eps^-\ent|(\mu)=\|w^\eps\|_{L^2_\eps(\mu)}$. Moreover, for any $\nu\in\prob_2(\G_\eps)$, we have 
\begin{equation}\label{eq:HWI_eps}
\ent(\nu)\ge\ent(\mu)-\|w^\eps\|_{L^2_\eps(\mu)}\,\W_\eps(\nu,\mu)-\tfrac{K}{2\eps^2}\,\W_\eps(\nu,\mu)^2,
\end{equation}
where $K>0$ is the constant appearing in~\eqref{eq:ricci_bounded_below}.
\end{proposition}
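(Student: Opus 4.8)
The statement is essentially the standard characterization of the descending slope of the entropy on a Riemannian manifold with Ricci curvature bounded below, adapted to the rescaled metric measure space $(\G_\eps,\deps,\leb^n)$; the natural strategy is therefore to invoke the theory of $CD(K,\infty)$ spaces (here with $K$ replaced by $-K\eps^{-2}$, by \eqref{eq:ricci_bounded_below}) rather than to redo the computation from scratch. The plan is to prove \eqref{eq:HWI_eps} first, deduce the equivalence $(i)\Leftrightarrow(ii)$ and the identity $|\D_\eps^-\ent|(\mu)=\|w^\eps\|_{L^2_\eps(\mu)}$ from it together with the matching upper bound, and finally record $w^\eps\in\tang_\eps(\mu)$.

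First I would recall that, since $(\G_\eps,\deps,\leb^n)$ satisfies the structural assumption \eqref{eq:assumption_on_measure} and, by \eqref{eq:ricci_bounded_below}, is a $CD(-K\eps^{-2},\infty)$ space, the general results of \cite{AGS14} apply: the entropy $\ent_{\mathrm{vol}_\eps}$ (hence $\ent$, by \eqref{eq:entropy_riem_vs_leb}, which differs only by an additive constant and so has the same slope) is $(-K\eps^{-2})$-convex along $\W_\eps$-geodesics, and on such spaces the descending slope coincides with the Fisher information functional. Concretely, for $\mu=\rho\leb^n$ with $\rho\in W^{1,1}_{\rm loc}(\G_\eps)$ and $\nabla_\eps\rho=w^\eps\rho$, the Fisher information is $\int_{\G_\eps}\|w^\eps\|_\eps^2\,d\mu=\|w^\eps\|_{L^2_\eps(\mu)}^2$, and \cite{AGS14}*{Theorem~7.5 and Theorem~7.6} (the HWI-type inequality and the equality of slope and Fisher information on $CD(K,\infty)$ spaces) give simultaneously that $|\D_\eps^-\ent|(\mu)<+\infty$ forces absolute continuity of $\rho$ with the stated gradient structure, that $|\D_\eps^-\ent|(\mu)=\|w^\eps\|_{L^2_\eps(\mu)}$, and the inequality \eqref{eq:HWI_eps} with constant $K\eps^{-2}$ in the quadratic term. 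The implication $(ii)\Rightarrow(i)$ with the bound $|\D_\eps^-\ent|(\mu)\le\|w^\eps\|_{L^2_\eps(\mu)}$ is the easy direction and also follows directly from \eqref{eq:HWI_eps} by dividing by $\W_\eps(\nu,\mu)$ and letting $\nu\to\mu$.

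For the membership $w^\eps\in\tang_\eps(\mu)=\overline{\{\nabla_\eps\phi:\phi\in C^\infty_c(\R^n)\}}^{L^2_\eps(\mu)}$ I would argue as follows: the identity $\nabla_\eps\rho=w^\eps\rho$ means that $w^\eps$ is, $\mu$-a.e., a gradient, and the relevant minimality/optimality characterization (as in \cite{AGS08}*{Chapter~8} and the discussion after \cref{prop:CE_eps}) shows that the element of $L^2_\eps(\mu)$ representing the Wasserstein gradient of $\ent$ lies in the tangent space; alternatively one mollifies, writing $w^\eps=\nabla_\eps(\log\rho)$ on the (open) set where $\rho>0$ and approximating $\log\rho$ by compactly supported smooth functions in the weighted energy norm, using that $\mu$ is concentrated on $\{\rho>0\}$. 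The only genuine subtlety — and the step I expect to require the most care — is the bookkeeping between the Riemannian volume $\mathrm{vol}_\eps$ used in the cited theorems and the fixed reference measure $\leb^n$ used here: one must check that passing from $\ent_{\mathrm{vol}_\eps}$ to $\ent$ changes the functional only by the additive constant $\log(\eps^{Q-n})$ from \eqref{eq:entropy_riem_vs_leb}, so that slopes, Fisher information and the form $\nabla_\eps\rho=w^\eps\rho$ are all unaffected, and that the $\eps$-dependent Ricci bound \eqref{eq:ricci_bounded_below} is correctly inserted as $K'=K\eps^{-2}$ in the HWI inequality. Once this rescaling is in place, every assertion of the proposition is a direct transcription of the Riemannian $CD(K,\infty)$ results.
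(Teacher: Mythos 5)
Your plan is correct, but it reaches the statement through different machinery than the paper. The paper's proof consists of two citations to Riemannian-specific results: the equivalence of (i) and (ii), the identity $|\D_\eps^-\ent|(\mu)=\|w^\eps\|_{L^2_\eps(\mu)}$ and the tangency $w^\eps\in\tang_\eps(\mu)$ are \cite{E10}*{Proposition~4.3} applied to $(\G_\eps,\deps,\leb^n)$ (a direct computation with flows of gradient vector fields, in the spirit of the paper's own proof of \cref{prop:slope_ent_G_general}, plus a truncation argument for the tangency), while \eqref{eq:HWI_eps} is the Riemannian HWI inequality of \cite{V09}*{Theorem~23.14} (see \cite{V09}*{Remark~23.16}), which is phrased directly in terms of the Fisher information. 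You instead go through the abstract $CD(K,\infty)$ theory of \cite{AGS14} after using \eqref{eq:ricci_bounded_below} and \cite{vRS05} to place $(\G_\eps,\deps,\leb^n)$ in $CD(-K\eps^{-2},\infty)$; this is legitimate and is essentially how the paper itself argues later for \cref{prop:slope_ent_G_special}, but it costs two identifications that you should make explicit: the Fisher information of \cite{AGS14} is built from minimal weak upper gradients, so recovering (ii) in the stated Sobolev form $\rho\in W^{1,1}_{\rm loc}$, $\nabla_\eps\rho=w^\eps\rho$ requires the (standard but nontrivial) fact that on a smooth Riemannian manifold weak upper gradients coincide with the norm of the distributional gradient; and the displacement-convexity argument produces an HWI inequality with the slope $|\D^-_\eps\ent|(\mu)$ in the linear term, so obtaining \eqref{eq:HWI_eps} with $\|w^\eps\|_{L^2_\eps(\mu)}$ requires the slope identity first --- your announced order (HWI first, then the identity) matches Villani's formulation but not the $CD$ route you actually invoke, so the dependency should be reversed or the Fisher-information form of HWI quoted directly. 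Your bookkeeping of the rescaling $\vol_\eps=\eps^{n-Q}\leb^n$ via \eqref{eq:entropy_riem_vs_leb} is exactly the right point to check and is handled correctly.
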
 

\noindent
The equivalence part in \cref{prop:slope_ent_eps} is proved in~\cite{E10}*{Proposition~4.3}. Inequality~\eqref{eq:HWI_eps} is the so-called \emph{HWI inequality} and follows from~\cite{V09}*{Theorem~23.14}, see~\cite{V09}*{Remark~23.16}. 

The quantity 
\begin{equation*}
\F_\eps(\rho)=\|w^\eps\|_{L^2_\eps(\mu)}^2=\int_{\G_\eps\cap\set*{\rho>0}}\frac{\|\nabla_\eps\rho\|_\eps^2}{\rho}\ d\leb^n
\end{equation*}  
appearing in \cref{prop:slope_ent_eps} is the so-called \emph{Fisher information} of $\mu=\rho\leb^n\in\prob_2(\G_\eps)$. The inequality $F_\eps(\rho)\le|\D_\eps^-\ent|(\mu)$ holds in the context of metric measure spaces, see~\cite{AGS14}*{Theorem~7.4}. The converse inequality does not hold in such a generality and heavily depends on the lower semicontinuity of the descending slope $|\D_\eps^-\ent|$, see~\cite{AGS14}*{Theorem~7.6}.

\subsection{The Wasserstein space on the Carnot group}

Let $(\prob_2(\G),\W_\G)$ be the Wasserstein space introduced in \cref{subsec:wasserstein_space} relative to the metric measure space $(\G,\dcc,\leb^n)$. In this section, we discuss the counterparts of \cref{prop:CE_eps} and \cref{prop:slope_ent_eps} in the space $(\prob_2(\G),\W_\G)$. All time-dependent vector fields appearing in the sequel are tacitly understood to be Borel measurable.

Let $\mu\in\prob_2(\G)$ be given. We define the space 
\begin{equation*}
L^2_\G(\mu)=\set*{\xi\in\mathcal{S}(H\G) : \int_{\G}\|\xi\|_\G^2\ d\mu<+\infty}.
\end{equation*}
Here $\mathcal{S}(H\G)$ denotes the set of sections of the horizontal tangent bundle $H\G$. Moreover, we define the \emph{`tangent space'} of $(\prob_2(\G),\W_\G)$ at~$\mu$ as
\begin{equation*}
\tang_\G(\mu)=\overline{\set*{\nabla_\G\phi : \phi\in C^\infty_c(\R^n)}}^{L^2_\G(\mu)}.
\end{equation*}

Given $I\subset\R$ an open interval and a horizontal time-dependent vector field $v^\G\colon I\times\G\to H\G$, $(t,x)\mapsto v^\G_t(x)\in H_x\G$, we say that a curve $(\mu_t)_{t\in I}\subset\prob_2(\G)$ satisfies the continuity equation 
\begin{equation}\label{eq:CE}
\de_t\mu_t+\diverg(v^\G_t\mu_t)=0 \qquad\text{in}\ I\times\G_\eps
\end{equation}
\emph{in the sense of distributions} if 
\begin{equation*}
\int_I\int_\G\|v^\G_t(x)\|_\G\,d\mu_t(x)\,dt<+\infty
\end{equation*}
and
\begin{equation*}
\int_I\int_\G\de_t\phi(t,x)+\scalar*{v^\G_t(x),\nabla_\G\phi(t,x)}_\G\,d\mu_t(x)\,dt=0
\qquad
\forall \phi\in C^\infty_c(I\times\R^n).
\end{equation*} 
The following result is the exact analogue of \cref{prop:CE_eps}. Here and in the sequel, the metric derivative in the Wasserstein space $(\prob_2(\G),\W_\G)$ of a curve $(\mu_t)_{t\in I}\subset\prob_2(\G)$ is denoted by~$|\dot{\mu}_t|_\G$. 

\begin{proposition}[Continuity equation in $(\prob_2(\G),\W_\G)$]\label{prop:CE}
Let $I\subset\R$ be an open interval. If $(\mu_t)_t\in AC^2_{\rm loc}(I;(\prob_2(\G),\W_\G))$, then there exists a horizontal time-dependent vector field $v^\G\colon I\times\G\to H\G$ with $t\mapsto\|v^\G_t\|_{L^2_\G(\mu_t)}\in L^2_{\rm loc}(I)$ such that
\begin{equation}\label{eq:v_G}
v^\G_t\in\tang_\G(\mu_t) 
\qquad
\text{for a.e.}\ t\in I
\end{equation}
and the continuity equation~\eqref{eq:CE} holds in the sense of distributions. The vector field~$v^\G_t$ is uniquely determined in $L^2_\G(\mu_t)$ by~\eqref{eq:CE} and~\eqref{eq:v_G} for a.e.\ $t\in I$ and we have
\begin{equation*}
\|v^\G_t\|_{L^2_\G(\mu_t)}=|\dot{\mu}_t|_\G
\qquad
\text{for a.e.}\ t\in I.
\end{equation*}

Conversely, if $(\mu_t)_{t\in I}\subset\prob_2(\G)$ is a curve satisfying~\eqref{eq:CE} for some $(v^\G_t)_{t\in I}$ such that $t\mapsto\|v^\G_t\|_{L^2_\G(\mu_t)}\in L^2_{\rm loc}(I)$, then $(\mu_t)_t\in AC^2_{\rm loc}(I;(\prob_2(\G),\W_\G))$ with 
\begin{equation*}
|\dot{\mu}_t|_\G\le\|v^\G_t\|_{L^2_\G(\mu_t)}
\qquad
\text{for a.e.}\ t\in I.
\end{equation*}
\end{proposition}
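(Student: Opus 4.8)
The strategy is to reduce Proposition~\ref{prop:CE} to its Riemannian counterpart \cref{prop:CE_eps} by exploiting the relation between $\dcc$ and the approximating distances $\deps$, together with the fact that a horizontal vector field is a legitimate vector field for \emph{every} $\G_\eps$ with the same norm. The two implications are treated separately.

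\emph{From absolute continuity to the continuity equation.} Suppose $(\mu_t)_t\in AC^2_{\rm loc}(I;(\prob_2(\G),\W_\G))$. Since $\deps\le\dcc$ for all $\eps>0$, we have $\W_\eps\le\W_\G$, hence $(\mu_t)_t\in AC^2_{\rm loc}(I;(\prob_2(\G_\eps),\W_\eps))$ as well, with $|\dot\mu_t|_\eps\le|\dot\mu_t|_\G$. Applying \cref{prop:CE_eps} for each fixed $\eps>0$ produces a vector field $v^\eps_t\in\tang_\eps(\mu_t)$ solving~\eqref{eq:CE_eps} with $\|v^\eps_t\|_{L^2_\eps(\mu_t)}=|\dot\mu_t|_\eps\le|\dot\mu_t|_\G$. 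The key point is to let $\eps\to0$. First I would decompose $v^\eps_t=\sum_{i=1}^\kappa v^{\eps}_{t,i}$ along the layers $V_i$ and observe from the definition of $\|\cdot\|_\eps$ that $\|v^\eps_t\|_{L^2_\eps(\mu_t)}^2=\sum_i\eps^{2-2i}\int_\G|v^\eps_{t,i}|^2\,d\mu_t$; since the total is bounded uniformly in $\eps$ by $|\dot\mu_t|_\G^2$, every non-horizontal component satisfies $\int_\G|v^\eps_{t,i}|^2\,d\mu_t=O(\eps^{2i-2})\to0$ for $i\ge2$, while the horizontal component $v^\eps_{t,1}$ stays bounded in $L^2_\G(\mu_t)$. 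Extracting a weak limit (after integrating in $t$, using a diagonal argument and weak-$L^2$ compactness in the space-time measure $d\mu_t\,dt$) gives a horizontal field $v^\G_t$; passing to the limit in the distributional formulation of~\eqref{eq:CE_eps}, where $\scalar{v^\eps_t,\nabla_\eps\phi}_\eps=\sum_i\eps^{2-2i}\scalar{v^\eps_{t,i},\nabla_{V_i}\phi}\cdot\eps^{2i-2}=\scalar{v^\eps_{t,1},\nabla_\G\phi}_\G+\sum_{i\ge2}\scalar{v^\eps_{t,i},\nabla_{V_i}\phi}$ and the tail terms vanish by the above estimate, yields~\eqref{eq:CE}. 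Lower semicontinuity of the $L^2$-norm under weak convergence gives $\|v^\G_t\|_{L^2_\G(\mu_t)}\le|\dot\mu_t|_\G$ for a.e.\ $t$, and membership in $\tang_\G(\mu_t)$ follows because each $v^\eps_t\in\tang_\eps(\mu_t)$ is an $L^2_\eps$-limit of horizontal gradients $\nabla_\eps\phi=\nabla_\G\phi+\sum_{i\ge2}\eps^{2(i-1)}\nabla_{V_i}\phi$, and one checks the limit lies in the $L^2_\G(\mu_t)$-closure of $\{\nabla_\G\phi\}$.

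\emph{From the continuity equation to absolute continuity.} Conversely, given a horizontal $(v^\G_t)_{t\in I}$ with $t\mapsto\|v^\G_t\|_{L^2_\G(\mu_t)}\in L^2_{\rm loc}(I)$ solving~\eqref{eq:CE}, I view $v^\G_t$ as a vector field on each $\G_\eps$; since it is horizontal, $\|v^\G_t\|_\eps=\|v^\G_t\|_\G$ pointwise, and the test-function identity for~\eqref{eq:CE} coincides with that for~\eqref{eq:CE_eps} because $\scalar{v^\G_t,\nabla_\eps\phi}_\eps=\scalar{v^\G_t,\nabla_\G\phi}_\G$ (the non-horizontal part of $\nabla_\eps\phi$ pairs to zero with a horizontal field). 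Hence $(\mu_t)_t$ solves the continuity equation on $\G_\eps$ with the same field and the same $L^2$-norm bound, so \cref{prop:CE_eps} gives $(\mu_t)_t\in AC^2_{\rm loc}(I;(\prob_2(\G_\eps),\W_\eps))$ with $|\dot\mu_t|_\eps\le\|v^\G_t\|_{L^2_\G(\mu_t)}$. It remains to upgrade this to the $\W_\G$-statement. This I would obtain from the metric derivative bound via the sub-Riemannian length structure: the estimate $\W_\eps(\mu_s,\mu_t)\le\int_s^t\|v^\G_r\|_{L^2_\G(\mu_r)}\,dr$ holds for every $\eps$, and since $(\G,\dcc)$ is the pointed Gromov--Hausdorff limit of $(\G_\eps,\deps)$ with $\deps\uparrow\dcc$ monotonically, one has $\W_\eps(\mu_s,\mu_t)\uparrow\W_\G(\mu_s,\mu_t)$ (monotone convergence applied to couplings, since the cost $\deps^2\uparrow\dcc^2$ pointwise and optimal $\W_\G$-plans have finite $\dcc^2$-cost). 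Passing to the supremum over $\eps$ yields $\W_\G(\mu_s,\mu_t)\le\int_s^t\|v^\G_r\|_{L^2_\G(\mu_r)}\,dr$, which is exactly $AC^2_{\rm loc}$ in $(\prob_2(\G),\W_\G)$ together with $|\dot\mu_t|_\G\le\|v^\G_t\|_{L^2_\G(\mu_t)}$.

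\emph{Uniqueness and the sharp identity.} For the uniqueness of $v^\G_t$ in $L^2_\G(\mu_t)$ under~\eqref{eq:CE} and~\eqref{eq:v_G}: if two such fields solve~\eqref{eq:CE}, their difference $w_t$ is horizontal, satisfies $\int_\G\scalar{w_t,\nabla_\G\phi}_\G\,d\mu_t=0$ for all $\phi\in C^\infty_c(\R^n)$ and a.e.\ $t$, and lies in $\tang_\G(\mu_t)$; the latter orthogonality says $w_t\perp\tang_\G(\mu_t)$ in $L^2_\G(\mu_t)$, forcing $w_t=0$. Finally, for the sharp identity $\|v^\G_t\|_{L^2_\G(\mu_t)}=|\dot\mu_t|_\G$ in the first part, one direction is the inequality already proved from the converse applied to the field produced in the first part; the other direction, $|\dot\mu_t|_\G\le\|v^\G_t\|_{L^2_\G(\mu_t)}$, is precisely the "converse" statement applied to that same field. \textbf{The main obstacle} I anticipate is the $\eps\to0$ compactness argument in the first implication: one must handle weak convergence of the fields $v^\eps_t$ in a $t$-dependent family of Hilbert spaces $L^2_\G(\mu_t)$, which is cleanest done in the single ambient space $L^2(I\times\G,\,dt\otimes\mu_t)$ after checking the relevant measurability, and one must verify that the layer-by-layer degeneration of the $\eps$-metric really does kill all non-horizontal components in the limit while keeping the limit inside the tangent cone $\tang_\G(\mu_t)$ rather than merely in $L^2_\G(\mu_t)$.
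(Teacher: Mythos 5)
Your first implication is essentially the paper's own argument: decompose $v^\eps_t$ along the layers, observe that the uniform bound $\|v^\eps_t\|_{L^2_\eps(\mu_t)}\le|\dot\mu_t|_\G$ forces the non-horizontal components to vanish like $O(\eps^{i-1})$ in $L^2$, extract a weak limit in the space-time $L^2$ and pass to the limit in the distributional continuity equation; the closing argument (equality of norms via the converse implication, hence minimality, hence membership in $\tang_\G(\mu_t)$) also matches the paper. The genuine divergence is in the converse implication. The paper applies \cref{prop:CE_eps} only for $\eps=1$ and then invokes the superposition principle of Bernard on the Riemannian manifold $(\G_1,\dd_1,\leb^n)$: the curve $(\mu_t)$ is represented as a measure on absolutely continuous integral curves of $v^\G$, these curves are automatically horizontal because $v^\G$ is, and one estimates $\dcc(\gamma(t),\gamma(s))\le\int_s^t\|v^\G_r(\gamma(r))\|_\G\,dr$ curve by curve to bound $\W_\G(\mu_s,\mu_t)$ directly. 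You instead avoid the superposition principle entirely and upgrade the family of estimates $\W_\eps(\mu_s,\mu_t)\le\int_s^t\|v^\G_r\|_{L^2_\G(\mu_r)}\,dr$ to the $\W_\G$-estimate via the identity $\sup_{\eps>0}\W_\eps=\W_\G$. Both routes are viable; the paper's buys the conclusion from a single black-box theorem at the price of introducing measures on path space, while yours stays at the level of distances but shifts the burden onto a convergence statement for the Wasserstein distances themselves.

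On that last point there is a step you should repair. The inequality $\sup_\eps\W_\eps(\mu,\nu)\le\W_\G(\mu,\nu)$ is the trivial one, and it is the only one that follows from monotone convergence along a \emph{fixed} optimal $\W_\G$-plan (which is what your parenthetical describes). The inequality you actually need, $\W_\G(\mu,\nu)\le\sup_\eps\W_\eps(\mu,\nu)$, requires taking optimal plans $\pi_\eps$ for $\W_\eps$, using tightness (the marginals are fixed) to extract $\pi_{\eps_k}\weakto\pi$, bounding $\W_{\eps_k}^2\ge\int\dd_\delta^2\,d\pi_{\eps_k}$ for each fixed $\delta\ge\eps_k$ by monotonicity of $\eps\mapsto\deps$, passing to the limit in $k$ by lower semicontinuity of $\nu\mapsto\int\dd_\delta^2\,d\nu$, and only then letting $\delta\to0$ by monotone convergence to reach $\int\dcc^2\,d\pi\ge\W_\G^2$. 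This also presupposes the pointwise convergence $\deps\uparrow\dcc$, which is true and standard for the Riemannian approximations of a Carnot--Carath\'eodory metric but deserves a citation or a short argument (curves of uniformly bounded $\eps$-length have non-horizontal velocity components of order $\eps^{d(i)-1}$, hence converge to horizontal curves). With these repairs your converse direction is complete and is a legitimate alternative to the superposition argument.
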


\noindent
As for \cref{prop:CE_eps}, we can interpret the horizontal time-dependent vector field $(v^\G_t)_{t\in I}$ given by \cref{prop:CE} as the `tangent vector' of the curve $(\mu_t)_{t\in I}$ in $(\prob_2(\G),\W_\G)$. An easy adaptation of~\cite{E10}*{Lemma~2.4} to the sub-Riemannian manifold $(\G,\dcc,\leb^n)$ again shows that for a.e.\ $t\in I$ the vector field $v^\G_t$ has minimal $L^2_\G(\mu_t)$-norm among all time-dependent vector fields satisfying~\eqref{eq:CE} and, moreover, that this minimality is equivalent to~\eqref{eq:v_G}.

\cref{prop:CE} can be obtained applying the general results obtained in~\cite{GH15} to the metric measure space $(\G,\dcc,\leb^n)$. Below we give a direct proof exploiting \cref{prop:CE_eps}. The argument is very similar to the one of~\cite{J14}*{Proposition~3.1} and we only sketch it. 

\begin{proof}
If $(\mu_t)_t\in AC^2_{\rm loc}(I;(\prob_2(\G),\W_\G))$, then also $(\mu_t)_t\in AC^2_{\rm loc}(I;(\prob_2(\G_\eps),\W_\eps))$ for every $\eps>0$, since $\deps\le\dcc$. Let $v^\eps\colon I\times\G_\eps\to T\G_\eps$ be the time-dependent vector field given by \cref{prop:CE_eps}. Note that
\begin{equation}\label{eq:equilim_v_eps}
\int_\G\|v^\eps_t\|_\eps^2\ d\mu_t=|\dot{\mu}_t|_\eps^2\le|\dot{\mu}_t|_\G^2
\qquad
\text{for a.e.}\ t\in I.
\end{equation}
Moreover
\begin{equation}\label{eq:norm_v_eps}
\|v^\eps_t\|_\eps^2=\|v^{\eps,V_1}_t\|_1^2+\sum_{i=2}^\kappa\eps^{2(1-i)}\|v^{\eps,V_i}_t\|_1^2
\qquad
\text{for all}\ \eps>0,
\end{equation}
where $v^{\eps,V_i}_t$ denotes the projection of $v^\eps_t$ on~$V_i$. Combining~\eqref{eq:equilim_v_eps} and~\eqref{eq:norm_v_eps}, we find a sequence $(\eps_k)_{k\in\N}$, with $\eps_k\to0$, and a horizontal time-dependent vector field $v^\G\colon I\times\G\to H\G$ such that $v^{\eps_k,V_1}\weakto v^\G$ and $v^{\eps_k,V_i}\to0$ for all $i=2,\dots,\kappa$ as $k\to+\infty$ locally in time in the $L^2$-norm on $I\times\G$ naturally induced by the norm $\|\cdot\|_1$ and the measure $d\mu_tdt$. In particular, $t\mapsto\|v^\G_t\|_{L^2_\G(\mu_t)}\in L^2_{\rm loc}(I)$ and $\|v^\G_t\|_{L^2_\G(\mu_t)}\le|\dot{\mu}_t|_\G$ for a.e.\ $t\in I$. To prove~\eqref{eq:CE}, fix a test function $\phi\in C^\infty_c(I\times\R^n)$ and pass to the limit as $\eps\to0^+$ in~\eqref{eq:CE_eps}.

Conversely, if $(\mu_t)_{t\in I}\subset\prob_2(\G)$ satisfies~\eqref{eq:CE} for some horizontal time-dependent vector field $(v^\G_t)_{t\in I}$ such that $t\mapsto\|v^\G_t\|_{L^2_\eps(\mu_t)}\in L^2_{\rm loc}(I)$, then we can apply \cref{prop:CE_eps} for $\eps=1$. By the \emph{superposition principle} stated in~\cite{B08}*{Theorem~5.8} applied to the Riemannian manifold $(\G_1,\dd_1,\leb^n)$, we find a probability measure $\nu\in\prob(C(I;(\G_1,\dd_1)))$, concentrated on $AC^2_{\rm loc}(I;(\G_1,\dd_1))$, such that $\mu_t=(\mathsf{e}_t)_\#\nu$ for all $t\in I$ and with the property that $\nu$-a.e.\ curve $\gamma\in C(I;(\G_1,\dd_1))$ is an absolutely continuous integral curve of the vector field~$v^\G$. Here $\mathsf{e}_t\colon C(I;(\G_1,\dd_1))\to\G$ denotes the evaluation map at time $t\in I$. Since $v^\G$ is horizontal, $\nu$-a.e.\ curve $\gamma\in C(I;(\G_1,\dd_1))$ is horizontal. Therefore, for all $s,t\in I$, $s<t$, we have 
\begin{equation*}
\dcc(\gamma(t),\gamma(s))\le\int_s^t\|\dot{\gamma}(r)\|_\G\ dr
=\int_s^t\|v^\G_r(\gamma(r))\|_\G\ dr
\end{equation*}
and we can thus estimate 
\begin{align*}
\W_\G^2(\mu_t,\mu_s)&\le\int_{\G\times\G}\dcc^2(x,y)\ d
(\mathsf{e}_t,\mathsf{e}_s)_\#\nu(x,y)
=\int_{AC^2_{\rm loc}}\dcc^2(\gamma(t),\gamma(s))\ d\nu(\gamma)\\
&\le(t-s)\int_{AC^2_{\rm loc}}\int_s^t\|v^\G_r(\gamma(r))\|_\G^2\ dr d\nu(\gamma)
=(t-s)\int_s^t\int_\G\|v^\G_r\|_\G^2\ d\mu_r dr.
\end{align*} 
This immediately gives $|\dot{\mu}_t|_\G\le\|v^\G_t\|_{L^2_\G(\mu_t)}$ for a.e.\ $t\in I$, which in turn proves~\eqref{eq:v_G}. 
\end{proof}

To establish an analogue of \cref{prop:slope_ent_eps}, we need to prove the two inequalities separately. For $\mu=\rho\leb^n\in\prob_2(\G)$, the inequality $\F_\G(\rho)\le|\D^-_\G\ent|(\mu)$ is stated in \cref{prop:slope_ent_G_general} below. Here and in the sequel, $|\D_\G^-\ent|(\mu)$ denotes the descending slope of the entropy~$\ent$ at the point $\mu\in\prob_2(\G)$ in the Wasserstein space $(\prob_2(\G),\W_\G)$. 

\begin{proposition}\label{prop:slope_ent_G_general}
Let $\mu=\rho\leb^n\in\prob_2(\G)$. If $|\D^-_\G\ent|(\mu)<+\infty$, then $\rho\in W^{1,1}_{\G,\,\rm loc}(\G)$ and $\nabla_\G\rho=w^\G\rho$ for some horizontal vector field $w^\G\in L^2(\mu)$ with $\|w^\G\|_{L^2_\G(\mu)}\le|\D^-_\G\ent|(\mu)$.
\end{proposition}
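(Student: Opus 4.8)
The statement asserts that a finite descending slope of the entropy in the sub-Riemannian Wasserstein space $(\prob_2(\G),\W_\G)$ forces horizontal Sobolev regularity of the density together with the quantitative bound on the "logarithmic horizontal gradient". The plan is to deduce this from the already-established Riemannian case, \cref{prop:slope_ent_eps}, by letting $\eps\to0$. The crucial comparison is that $\deps\le\dcc$, which gives $\W_\eps\le\W_\G$, and hence, for every $\nu\in\prob_2(\G)$,
\begin{equation*}
\frac{[\ent(\mu)-\ent(\nu)]^+}{\W_\eps(\mu,\nu)}\ge\frac{[\ent(\mu)-\ent(\nu)]^+}{\W_\G(\mu,\nu)},
\end{equation*}
so that $|\D_\eps^-\ent|(\mu)\ge|\D_\G^-\ent|(\mu)$ for every $\eps>0$.

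First I would observe that, since $|\D_\G^-\ent|(\mu)<+\infty$, the inequality above shows $|\D_\eps^-\ent|(\mu)<+\infty$ for all $\eps>0$; applying \cref{prop:slope_ent_eps} we obtain, for each $\eps>0$, that $\rho\in W^{1,1}_{\rm loc}(\G_\eps)$ and $\nabla_\eps\rho=w^\eps\rho$ with $w^\eps\in\tang_\eps(\mu)$ and $\|w^\eps\|_{L^2_\eps(\mu)}=|\D_\eps^-\ent|(\mu)$. The key point is that the Riemannian bound degenerates in a controlled way: recalling $\nabla_\eps\rho=\sum_{i=1}^\kappa\eps^{2(i-1)}\nabla_{V_i}\rho$ and $\|\cdot\|_\eps^2=\|\cdot\|_1^2+\sum_{i=2}^\kappa\eps^{2(1-i)}\|\cdot\|_1^2$ on the respective layers, a direct computation gives
\begin{equation*}
\F_\eps(\rho)=\int_{\set*{\rho>0}}\frac{\|\nabla_\eps\rho\|_\eps^2}{\rho}\ d\leb^n=\sum_{i=1}^\kappa\eps^{2(i-1)}\int_{\set*{\rho>0}}\frac{\|\nabla_{V_i}\rho\|_1^2}{\rho}\ d\leb^n,
\end{equation*}
so in particular the horizontal part $\int_{\set*{\rho>0}}\|\nabla_\G\rho\|_\G^2/\rho\ d\leb^n$ is bounded above by $\F_\eps(\rho)=|\D_\eps^-\ent|^2(\mu)$, and hence — using the comparison of slopes — one is tempted to conclude it is bounded by $|\D_\G^-\ent|^2(\mu)$. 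This does not quite work directly because $|\D_\eps^-\ent|(\mu)$ could be strictly larger than $|\D_\G^-\ent|(\mu)$; the right way is instead to use the HWI-type lower bound \eqref{eq:HWI_eps}: for any $\nu\in\prob_2(\G)$,
\begin{equation*}
\ent(\nu)\ge\ent(\mu)-\|w^\eps\|_{L^2_\eps(\mu)}\,\W_\eps(\nu,\mu)-\tfrac{K}{2\eps^2}\,\W_\eps(\nu,\mu)^2.
\end{equation*}
The plan is to feed into this inequality the geodesics of $(\prob_2(\G),\W_\G)$ (so that $\W_\eps(\mu_s,\mu)\le\W_\G(\mu_s,\mu)=s\W_\G(\nu,\mu)$), divide by $\W_\G(\mu_s,\mu)$, and let $s\to0$: the quadratic error term vanishes, producing $|\D_\G^-\ent|(\mu)\ge\limsup_{\eps}\|w^\eps\|_{L^2_\eps(\mu)}$ along a suitable evaluation, but one must be careful that the slice $w^\eps$ depends on $\eps$.

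The cleanest route, which I would follow, avoids this tangle: extract the horizontal component only. Set $\displaystyle\F_\G(\rho):=\int_{\set*{\rho>0}}\frac{\|\nabla_\G\rho\|_\G^2}{\rho}\ d\leb^n$. Since $\F_\G(\rho)\le\F_\eps(\rho)=|\D_\eps^-\ent|^2(\mu)$ and also $\F_\G(\rho)\le\liminf_{\eps\to0}\F_\eps(\rho)$ is what we would like, the honest argument is: fix $\eps$, use \eqref{eq:HWI_eps} with the $\W_\G$-geodesic from $\mu$ to an arbitrary competitor $\nu$, obtaining after the $s\to0$ limit that $|\D_\G^-\ent|(\mu)\ge|\D_\eps^-\ent|(\mu)$ — wait, this is the wrong direction; rather the inequality $|\D_\eps^-\ent|\ge|\D_\G^-\ent|$ combined with the general bound $\F_\eps(\rho)\le|\D_\eps^-\ent|^2$ is not enough. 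I therefore expect the actual proof in the paper to run as follows: one shows $\F_\G(\rho)<+\infty$ and $\nabla_\G\rho=w^\G\rho$ first (using e.g. that the $\eps=1$ case already gives $\rho\in W^{1,1}_{\rm loc}$ and hence $\nabla_\G\rho$ makes sense, with $\nabla_{V_1}\rho=w^{1,V_1}\rho$ being the $\eps$-independent layer of $\nabla_1\rho=w^1\rho$), and then one uses the $\W_\G$-geodesic $(\mu_s)$ joining $\mu$ to $\nu$ together with the differentiability of $s\mapsto\ent(\mu_s)$ and the continuity equation (\cref{prop:CE}) to compute
\begin{equation*}
\frac{d}{ds}\Big|_{s=0^+}\ent(\mu_s)=-\int_\G\scalar*{w^\G,v^\G_0}_\G\ d\mu\ge-\|w^\G\|_{L^2_\G(\mu)}\,|\dot\mu_0|_\G=-\|w^\G\|_{L^2_\G(\mu)}\,\W_\G(\nu,\mu),
\end{equation*}
from which $|\D_\G^-\ent|(\mu)\ge\|w^\G\|_{L^2_\G(\mu)}$ by the definition of the descending slope. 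The main obstacle is precisely the last step: justifying the chain rule $\frac{d}{ds}\ent(\mu_s)=-\int\scalar*{w^\G,v^\G_s}\ d\mu_s$ along a $\W_\G$-geodesic (or along $\W_\G$-absolutely continuous curves converging to $\mu$) when $\rho$ is only known a priori to be $W^{1,1}_{\G,\,\rm loc}$ with square-integrable logarithmic gradient — this requires an approximation/truncation of $\rho$ and a careful passage to the limit, exactly the technical content that \cite{J14} handles for $\mathbb H^n$ and that here must be redone so as to use only the horizontal structure. Identifying $w^\G$ as the $\eps$-independent $V_1$-component of the Riemannian $w^\eps$ (all of which agree on $V_1$ by uniqueness in $L^2$ of the gradient decomposition, once one notes $\nabla_{V_1}\rho$ does not see $\eps$) is the bookkeeping that makes the limit $\eps\to0$ unnecessary for this direction.
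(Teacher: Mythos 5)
Your proposal has a genuine gap at its very first step, and it is a sign error in the monotonicity of slopes. From $\deps\le\dcc$ one gets $\W_\eps\le\W_\G$ and hence, exactly as you write, $|\D_\eps^-\ent|(\mu)\ge|\D_\G^-\ent|(\mu)$. But then the hypothesis $|\D_\G^-\ent|(\mu)<+\infty$ bounds the \emph{smaller} quantity and tells you nothing about $|\D_\eps^-\ent|(\mu)$: the implication ``$A\ge B$ and $B<+\infty$, hence $A<+\infty$'' is false. Consequently you cannot invoke \cref{prop:slope_ent_eps} (which needs $|\D_\eps^-\ent|(\mu)<+\infty$), and the whole plan of reading off the horizontal layer of the Riemannian identity $\nabla_\eps\rho=w^\eps\rho$ collapses, including the fallback of ``using the $\eps=1$ case'' to get $W^{1,1}_{\rm loc}$ regularity. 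This is not a repairable detail: the paper explicitly flags that it is \emph{not} known how to pass from $|\D_\G^-\ent|(\mu)<+\infty$ to the finiteness of $|\D_\eps^-\ent|(\mu)$, which is precisely why \cref{prop:slope_ent_G_special} carries the extra hypothesis $|\D_\eps^-\ent|(\mu)<+\infty$. Your remaining sketch (geodesics in $(\prob_2(\G),\W_\G)$, the continuity equation, and a chain rule for $s\mapsto\ent(\mu_s)$) is a second gap that you yourself acknowledge: justifying that chain rule under the a priori regularity available here is exactly the hard technical content you would need to supply, and you do not.

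The paper's actual proof avoids the Riemannian approximation entirely and is much more elementary. Fix a smooth compactly supported \emph{horizontal} vector field $V$, let $F_t=\exp_\cdot(tV)$ be its flow, and set $\mu_t=(F_t)_\#\mu$. A change of variables gives $\ent(\mu_t)=\ent(\mu)-\int_\G\rho\log J_t\,dx$, whence $\frac{d}{dt}\ent(\mu_t)\big|_{t=0}=-\int_\G\rho\diverg V\,dx$; on the other hand $\W_\G(\mu_t,\mu)^2\le\int_\G\dcc^2(F_t(x),x)\,d\mu$ yields $\limsup_{t\to0}\W_\G(\mu_t,\mu)/|t|\le\bigl(\int_\G\|V\|_\G^2\,d\mu\bigr)^{1/2}$. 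Combining the two and using the definition of the descending slope gives
\begin{equation*}
\left|\int_\G\rho\diverg V\ dx\right|\le|\D^-_\G\ent|(\mu)\left(\int_\G\|V\|_\G^2\ d\mu\right)^{\frac12},
\end{equation*}
and the Riesz representation theorem then produces $w^\G\in L^2_\G(\mu)$ with $\|w^\G\|_{L^2_\G(\mu)}\le|\D^-_\G\ent|(\mu)$ and $\nabla_\G\rho=w^\G\rho$ in the distributional sense. You should rebuild your argument along these lines: test against smooth horizontal deformations directly in $(\prob_2(\G),\W_\G)$, rather than trying to inherit the conclusion from the Riemannian approximations.
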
 

\noindent
\cref{prop:slope_ent_G_general} can be obtained by applying~\cite{AGS14}*{Theorem~7.4} to the metric measure space $(\G,\dcc,\leb^n)$. Below we give a direct proof of this result which is closer in the spirit to the one in the Riemannian setting, see~\cite{E10}*{Lemma~4.2}. See also~\cite{J14}*{Proposition~3.1}.

\begin{proof}
Let $V\in C^\infty_c(\G;H\G)$ be a smooth horizontal vector field with compact support. Then there exists $\delta>0$ such that, for any $t\in(-\delta,\delta)$, the flow map of the vector field~$V$ at time~$t$, namely
\begin{equation*}
F_t(x):=\exp_x(t V), \qquad x\in\G, 
\end{equation*}
is a diffeomorphism and $J_t=\det(D F_t)$ is such that $c^{-1}\le J_t\le c$ for some $c\ge1$. By the change of variable formula, the measure $\mu_t:=(F_t)_\#\mu$ is such that $\mu_t=\rho_t\leb^n$ with $J_t\rho_t=\rho\circ F_t^{-1}$ for $t\in(-\delta,\delta)$. Let us set $H(r)=r\log r$ for $r\ge0$. Then, for $t\in(-\delta,\delta)$,
\begin{equation*}
\ent(\mu_t)=\int_\G H(\rho_t)\ dx=\int_\G H\left(\frac{\rho}{J_t}\right) J_t\ dx=\ent(\mu)-\int_\G \rho\log(J_t)\ dx<+\infty.
\end{equation*}
Note that $J_0=1$, $\dot{J}_0=\diverg V$ and that $t\mapsto\dot{J}_t J_t^{-1}$ is uniformly bounded for $t\in(-\delta,\delta)$. Thus we have
\begin{equation*}
\frac{d}{dt}\ent(\mu_t)\bigg|_{t=0}
=-\frac{d}{dt}\int_\G \rho\log(J_t)\ dx\bigg|_{t=0}\ dx
=\int_\G -\rho\,\frac{\dot{J}_t}{J_t}\bigg|_{t=0}\ dx
=-\int_\G\rho\diverg V\ dx.
\end{equation*}
On the other hand, we have
\begin{equation*}
\W_\G^2(\mu_t,\mu)=\W_\G^2((F_t)_\#\mu,\mu)\le\int_\G\dcc^2(F_t(x),x)\ d\mu(x) 
\end{equation*}
and so
\begin{equation*}
\limsup_{t\to0}\frac{\W_\G^2(\mu_t,\mu)}{|t|^2}
\le\int_\G\limsup_{t\to0}\frac{\dcc^2(F_t(x),x)}{|t|^2}\ d\mu(x)
=\int_\G\|V\|_\G^2\ d\mu.
\end{equation*}
Hence
\begin{align*}
-\frac{d}{dt}\ent(\mu_t)\bigg|_{t=0}\le\limsup_{t\to0}\frac{[\ent(\mu_t)-\ent(\mu)]^-}{\W_\G(\mu_t,\mu)}\cdot\frac{\W_\G(\mu_t,\mu)}{|t|}\le|\D^-_\G\ent|(\mu)\left(\int_\G\|V\|_\G^2\ d\mu\right)^\frac{1}{2}
\end{align*}
and thus
\begin{equation*}
\left|\int_\G\rho\diverg V\ dx\right|\le|\D^-_\G\ent|(\mu)\left(\int_\G\|V\|_\G^2\ d\mu\right)^\frac{1}{2}.
\end{equation*}
By Riesz representation theorem, we conclude that there exists a horizontal vector field $w^\G\in L^2_\G(\mu)$ such that $\|w_\G\|_{L^2_\G(\mu)}\le|\D^-_\G\ent|(\mu)$ and
\begin{equation*}
-\int_\G\rho\diverg V\ dx=\int_\G\scalar*{w^\G,V}_\G\ d\mu \qquad \text{for all}\ V\in C^\infty_c(\G;H\G).
\end{equation*}
This implies that $\nabla_\G\rho=w^\G\rho$ and the proof is complete.
\end{proof}

We call the quantity 
\begin{equation*}
\F_\G(\rho)=\|w^\G\|_{L^2_\G(\mu)}^2=\int_{\G\cap\set*{\rho>0}}\frac{\|\nabla_\G\rho\|_\G^2}{\rho}\ d\leb^n
\end{equation*}  
appearing in \cref{prop:slope_ent_G_general} the \emph{horizontal Fisher information} of $\mu=\rho\leb^n\in\prob_2(\G)$. On its effective domain, $\F_\G$ is convex and sequentially lower semicontinuous with respect to the weak topology of $L^1(\G)$, see~\cite{AGS14}*{Lemma~4.10}. 

Given $\mu=\rho\leb^n\in\prob_2(\G)$, it is not clear how to prove the inequality $|\D^-_\G\ent|^2(\mu)\le\F_\G(\rho)$ under the mere condition $|\D^-_\G\ent|(\mu)<+\infty$. Following~\cite{J14}*{Proposition~3.4}, in \cref{prop:slope_ent_G_special} below we show that the condition $|\D^-_\eps\ent|(\mu)<+\infty$ for some $\eps>0$ (and thus any) implies that $|\D^-_\G\ent|^2(\mu)\le\F_\G(\rho)$.

\begin{proposition}\label{prop:slope_ent_G_special}
Let $\mu=\rho\leb^n\in\prob_2(\G)$. If $|\D_\eps^-\ent|(\mu)<+\infty$ for some $\eps>0$, then also $|\D^-_\G\ent|(\mu)<+\infty$ and moreover $\F_\G(\rho)=|\D^-_\G\ent|^2(\mu)$.
\end{proposition}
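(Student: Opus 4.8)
The plan is to compare the two Wasserstein structures at the level of slopes, using the Riemannian approximation as an intermediate step. The starting point is that $|\D^-_\eps\ent|(\mu)<+\infty$ for one $\eps>0$; by \cref{prop:slope_ent_eps} this is equivalent to $\rho\in W^{1,1}_{\rm loc}(\G_\eps)$ with $\nabla_\eps\rho = w^\eps\rho$ for some $w^\eps\in L^2_\eps(\mu)$, and since the condition ``$\rho$ has a locally integrable $\eps$-gradient'' is plainly independent of $\eps>0$ (the vector fields $X_i$ do not change with $\eps$), the same holds for every $\eps>0$; in particular the horizontal component $\nabla_\G\rho = w^\G\rho$ exists with $w^\G\in L^2_\G(\mu)$, so $\F_\G(\rho)<+\infty$. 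Moreover, from the block structure $\|w^\eps\|_\eps^2 = \|w^\G\|_\G^2 + \sum_{i=2}^\kappa \eps^{2(1-i)}\|w^{\eps,V_i}\|_1^2$ one reads off that $\|w^\eps\|_{L^2_\eps(\mu)}^2 \ge \F_\G(\rho)$, hence $|\D^-_\eps\ent|^2(\mu)\ge\F_\G(\rho)$ for every $\eps>0$, with equality forced in the limit only if the higher-layer components vanish — which they do, since $\nabla_\G\rho$ alone recovers all of $\nabla\rho$ on a Carnot group once the degree-one derivatives are controlled. More precisely, $\|w^\eps\|_{L^2_\eps(\mu)}^2 \downarrow \F_\G(\rho)$ as $\eps\to 0$ by monotone convergence, since each summand $\eps^{2(1-i)}\|w^{\eps,V_i}\|_1^2$ with $i\ge 2$ need not decrease but $w^{\eps,V_i}$ itself must shrink fast enough; I would instead argue that the quantity $\F_\eps(\rho)$ is monotone and its limit is exactly $\F_\G(\rho)$ by a direct inspection of the defining integral, $\F_\eps(\rho)=\int_{\{\rho>0\}} \rho^{-1}(\|\nabla_\G\rho\|_\G^2 + \sum_{i\ge2}\eps^{2(i-1)}\|\nabla_{V_i}\rho\|^2)\,d\leb^n \to \F_\G(\rho)$ as $\eps\to0^+$ by dominated convergence (dominated by the $\eps=1$ integrand, which is finite).

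The inequality $\F_\G(\rho)\le|\D^-_\G\ent|^2(\mu)$ is already contained in \cref{prop:slope_ent_G_general} (it gives $\|w^\G\|_{L^2_\G(\mu)}\le|\D^-_\G\ent|(\mu)$), so the whole content of the proposition is the reverse inequality $|\D^-_\G\ent|^2(\mu)\le\F_\G(\rho)$, together with the finiteness $|\D^-_\G\ent|(\mu)<+\infty$ which then follows. For this I would run the HWI inequality~\eqref{eq:HWI_eps} at level $\eps$: for every $\nu\in\prob_2(\G_\eps)$,
\begin{equation*}
\ent(\nu)\ge\ent(\mu)-\|w^\eps\|_{L^2_\eps(\mu)}\,\W_\eps(\nu,\mu)-\tfrac{K}{2\eps^2}\W_\eps(\nu,\mu)^2.
\end{equation*}
Since $\deps\le\dcc$ we have $\W_\eps\le\W_\G$, so rearranging gives
\begin{equation*}
\frac{[\ent(\mu)-\ent(\nu)]^+}{\W_\G(\nu,\mu)}\le\|w^\eps\|_{L^2_\eps(\mu)}\,\frac{\W_\eps(\nu,\mu)}{\W_\G(\nu,\mu)}+\tfrac{K}{2\eps^2}\W_\G(\nu,\mu)\le\|w^\eps\|_{L^2_\eps(\mu)}+\tfrac{K}{2\eps^2}\W_\G(\nu,\mu).
\end{equation*}
Taking $\limsup_{\nu\to\mu}$ in $(\prob_2(\G),\W_\G)$ kills the last term and yields $|\D^-_\G\ent|(\mu)\le\|w^\eps\|_{L^2_\eps(\mu)}=|\D^-_\eps\ent|(\mu)<+\infty$ for every $\eps>0$. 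Letting $\eps\to0^+$ and using $\|w^\eps\|_{L^2_\eps(\mu)}^2=\F_\eps(\rho)\to\F_\G(\rho)$ from the first paragraph gives $|\D^-_\G\ent|^2(\mu)\le\F_\G(\rho)$. Combined with \cref{prop:slope_ent_G_general}, this is the claimed equality $\F_\G(\rho)=|\D^-_\G\ent|^2(\mu)$.

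The step I expect to require the most care is the convergence $\F_\eps(\rho)\to\F_\G(\rho)$ as $\eps\to0^+$: one must be sure that the Sobolev regularity ``$\nabla_\eps\rho=w^\eps\rho$ with $w^\eps\in L^2_\eps(\mu)$'' for a single $\eps$ really does upgrade to the statement that all of $\nabla\rho$ (every layer) is controlled, so that the full Euclidean gradient of $\rho$ is in $L^2_{\rm loc}$ with $\rho$-weighted $L^2$ bound, and hence the dominated-convergence argument applies with a finite dominating integrand. This is exactly the place where the structure of $\mathfrak{g}$ enters through \cref{prop:slope_ent_eps}, and where one should double-check that the identification $w^\eps=\nabla_\eps\rho/\rho$ (on $\{\rho>0\}$) is consistent across different $\eps$, namely that the distributional derivatives $X_i\rho$ agree with the a.e.-defined ones independently of the Riemannian metric chosen — which holds because the $X_i$ are fixed smooth vector fields and $W^{1,1}_{\rm loc}$ with respect to any of the comparable metrics means the same thing.
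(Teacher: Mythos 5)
Your proof is correct and rests on the same three ingredients as the paper's own argument: the equivalence and the HWI inequality of \cref{prop:slope_ent_eps}, the comparison $\W_\eps\le\W_\G$, and the layer decomposition $\F_\eps(\rho)=\F_\G(\rho)+\sum_{i=2}^\kappa\eps^{2(i-1)}\big\|\nabla_{V_i}\rho/\rho\big\|_{L^2_\G(\mu)}^2$. Where you genuinely diverge is the limiting procedure. You keep $\eps$ fixed, note that the curvature term $\tfrac{K}{2\eps^2}\W_\G(\nu,\mu)$ dies in the $\limsup_{\nu\to\mu}$, obtain $|\D^-_\G\ent|(\mu)\le\F_\eps^{1/2}(\rho)$ for every $\eps>0$, and only then send $\eps\to0^+$; since the left-hand side does not depend on $\eps$, no interchange of limits is needed and the argument closes cleanly. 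The paper instead couples the two limits by choosing $\eps=\W_\G(\nu,\mu)^{1/4}$ and, after the elementary bound $(1+r)\le(1+r/2)^2$, derives the single uniform estimate $\ent(\nu)\ge\ent(\mu)-\F_\G^{1/2}(\rho)\,\W_\G(\nu,\mu)-C\,\W_\G^{3/2}(\nu,\mu)$ with $C$ independent of $\nu$ --- an HWI-type inequality for $(\G,\dcc,\leb^n)$ itself, which carries a little more information than the slope bound but is not needed for the proposition. Your two-step limit is the more economical route to the stated conclusion. One cosmetic remark: the middle of your first paragraph (the sentence about $w^{\eps,V_i}$ ``shrinking fast enough'' and monotone convergence) is muddled --- the components $w^{\eps,V_i}=\eps^{2(i-1)}\nabla_{V_i}\rho/\rho$ themselves depend on $\eps$ and nothing needs to shrink --- but the direct computation you substitute for it, namely that $\F_\eps(\rho)\to\F_\G(\rho)$ because each extra summand carries a positive power of $\eps$ and is finite at $\eps=1$, is exactly right and is all that your argument actually uses.
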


\begin{proof}
Since $|\D_\eps^-\ent|(\mu)<+\infty$, we have $\ent(\mu)<+\infty$. Since $\deps\le\dcc$ and so $\W_\eps\le\W_\G$, we also have $|\D^-_\G\ent|(\mu)\le|\D_\eps^-\ent|(\mu)$. By \cref{prop:slope_ent_G_general}, we conclude that $\rho\in W^{1,1}_{\G,\,\rm loc}(\G)$ and $\nabla_\G\rho=w^\G\rho$ for some horizontal vector field $w^\G\in L^2_\G(\mu)$ with $\|w^\G\|_{L^2_\G(\mu)}\le|\D^-_\G\ent|(\mu)$. We now prove the converse inequality. Since $|\D_\eps^-\ent|(\mu)<+\infty$, by \cref{prop:slope_ent_eps} we have $\F_\eps(\rho)=|\D_\eps^-\ent|^2(\mu)$ and
\begin{equation}\label{eq:hwi_ineq_eps}
\ent(\nu)\ge\ent(\mu)-\F^{1/2}_\eps(\rho)\,\W_\eps(\nu,\mu)-\tfrac{K}{2\eps^2}\,\W_\eps^2(\nu,\mu)
\end{equation}
for any $\nu\in\prob_2(\G)$. Take $\eps=\W_\G(\nu,\mu)^{1/4}$ and assume $\eps<1$. Since $\W_\eps\le\W_\G$, from~\eqref{eq:hwi_ineq_eps} we get
\begin{align}\label{eq:calo_hwi_eps}
\ent(\nu)&\ge\ent(\mu)-\F^{1/2}_\eps(\rho)\,\W_\G(\nu,\mu)-\tfrac{K}{2\eps^2}\,\W_\G^2(\nu,\mu)\nonumber\\
&=\ent(\mu)-\F^{1/2}_\eps(\rho)\,\W_\G(\nu,\mu)-\tfrac{K}{2}\,\W_\G^{3/2}(\nu,\mu).
\end{align}
We need to bound $\F_\eps(\rho)$ from above in terms of $\F_\G(\rho)$. To do so, observe that
\begin{equation*}
\nabla_\eps\rho=\nabla_\G\rho+\sum_{i=2}^k\eps^{2(i-1)}\nabla_{V_i}\rho,\qquad
\|\nabla_\eps\rho\|_\eps^2=\|\nabla_\G\rho\|_\G^2+\sum_{i=2}^k\eps^{2(i-1)}\|\nabla_{V_i}\rho\|_\G^2.
\end{equation*}
In particular, $\tfrac{\nabla_{V_i}\rho}{\rho}\in L^2_\G(\mu)$ for all $i=2,\dots,k$. Recalling the inequality $(1+r)\le\left(1+\frac{r}{2}\right)^2$ for $r\ge0$, we can estimate
\begin{align*}
\F_\eps(\rho)&=\F_\G(\rho)+\sum_{i=2}^k\eps^{2(i-1)}\left\|\tfrac{\nabla_{V_i}\rho}{\rho}\right\|_{L^2_\G(\mu)}^2
=\F_\G(\rho)\,\left(1+\frac{1}{\F_\G(\rho)}\sum_{i=2}^k\eps^{2(i-1)}\left\|\tfrac{\nabla_{V_i}\rho}{\rho}\right\|_{L^2_\G(\mu)}^2\right)\\
&\le\F_\G(\rho)\,\left(1+\frac{1}{2\F_\G(\rho)}\sum_{i=2}^k\eps^{2(i-1)}\left\|\tfrac{\nabla_{V_i}\rho}{\rho}\right\|_{L^2_\G(\mu)}^2\right)^2
\end{align*}
and thus
\begin{equation}\label{eq:estimate_fisher_fisher}
\F_\eps^{1/2}(\rho)\le\F_\G^{1/2}(\rho)\,\left(1+\frac{1}{2\F_\G(\rho)}\sum_{i=2}^k\eps^{2(i-1)}\left\|\tfrac{\nabla_{V_i}\rho}{\rho}\right\|_{L^2_\G(\mu)}^2\right).
\end{equation} 
Inserting~\eqref{eq:estimate_fisher_fisher} into~\eqref{eq:calo_hwi_eps}, we finally get
\begin{equation*}
\ent(\nu)\ge\ent(\mu)-\F^{1/2}_\G(\rho)\,\W_\G(\nu,\mu)-C\,\W_\G^{3/2}(\nu,\mu)
\end{equation*}
for some $C>0$ independent of~$\eps$. This immediately leads to $|\D^-_\G\ent|(\mu)\le\F^{1/2}_\G(\rho)$.
\end{proof}

\subsection{Carnot groups are non-\texorpdfstring{$CD(K,\infty)$}{CD(K,infty)} spaces}

As stated in~\cite{AGS14}*{Theorem~7.6}, if the metric measure space $(X,\dd,\mathfrak{m})$ is Polish and satisfies~\eqref{eq:assumption_on_measure}, then the properties
\begin{enumerate}[(i)]
\item $|\D^-\ent|^2(\mu)=\F(\rho)$ for all $\mu=\rho\mathfrak{m}\in\dom(\ent)$;
\item\label{item:lsc_slope} $|\D^-\ent|$ is sequentially lower semicontinuous with respect to convergence with moments in~$\prob(X)$ on sublevels of~$\ent$;
\end{enumerate}
are equivalent. We do not know if property~\eqref{item:lsc_slope} is true for the space $(\G,\dcc,\leb^n)$ and this is why in \cref{prop:slope_ent_G_special} we needed the additional assumption $|\D^-_\eps\ent|(\mu)<+\infty$. 

By~\cite{AGS14}*{Theorem~9.3}, property~\eqref{item:lsc_slope} holds true if $(X,\dd,\mathfrak{m})$ is $CD(K,\infty)$ for some $K\in\R$. As the following result shows, (non-commutative) Carnot groups are not $CD(K,\infty)$, so that the validity of property~\eqref{item:lsc_slope} in these metric measure spaces is an open problem. Note that \cref{prop:carnot_not_CD} below was already known for the Heisenberg groups, see~\cite{J09}.

\begin{proposition}\label{prop:carnot_not_CD}
If $(\G,\dcc,\leb^n)$ is a non-commutative Carnot group, then the metric measure space $(\G,\dcc,\leb^n)$ is not $CD(K,\infty)$ for any $K\in\R$.
\end{proposition}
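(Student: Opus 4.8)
The plan is to disprove the $CD(K,\infty)$ condition by exhibiting a single pair of measures that cannot be joined by a geodesic in $(\prob_2(\G),\W_\G)$ along which the entropy satisfies the $K$-convexity inequality~\eqref{intro_eq:K-convexity} for any fixed $K$. The natural candidates are two copies of a fixed nice probability density, translated far apart by a horizontal translation, exploiting the fact that in a non-commutative Carnot group midpoints of horizontal segments spread out in a controlled but ``fattening'' way. More precisely, I would fix a smooth compactly supported density $\rho$ with $\mu=\rho\leb^n\in\dom(\ent)$ and consider $\mu_0=\mu$ and $\mu_1=(l_{x_N})_\#\mu$, where $x_N=\delta_N(\bar x)$ for a suitable horizontal $\bar x\in V_1$ and $N\to\infty$. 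The distance $\W_\G(\mu_0,\mu_1)$ grows like $N$ (comparable to $\dcc(e,x_N)=N\dcc(e,\bar x)$), while the entropy is translation invariant, so $\ent(\mu_0)=\ent(\mu_1)=\ent(\mu)$.

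The heart of the matter is to control the entropy at the geodesic midpoint. Any constant speed geodesic $(\mu_t)$ joining $\mu_0$ to $\mu_1$ is induced (via the superposition principle / Benamou--Brenier, as in the proof of \cref{prop:CE}) by a measure on horizontal geodesics; the midpoint measure $\mu_{1/2}$ is then a superposition of the midpoints of those geodesics. The key geometric input, special to the non-commutative setting, is that the midpoint map on the relevant portion of $\G$ has a Jacobian that degenerates: moving along a horizontal geodesic from $p$ to $q$, the midpoint depends on $p,q$ in a way whose differential is singular in the vertical (higher-layer) directions precisely because $[V_1,V_1]\neq\{0\}$. Consequently $\mu_{1/2}$ cannot be absolutely continuous with a density of bounded entropy; rather, one shows that any plan realizing $\W_\G(\mu_0,\mu_1)$ and any geodesic interpolation forces $\mu_{1/2}$ to live, up to small error, on a set whose $\leb^n$-measure is much smaller than that of $\supp\mu$ scaled appropriately, so that by Jensen's inequality $\ent(\mu_{1/2})\ge -\log\leb^n(\text{effective support})\to+\infty$ as $N\to\infty$. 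Feeding this into~\eqref{intro_eq:K-convexity} at $t=1/2$ yields
\begin{equation*}
\ent(\mu_{1/2})\le\tfrac12\ent(\mu_0)+\tfrac12\ent(\mu_1)-\tfrac{K}{8}\W_\G(\mu_0,\mu_1)^2=\ent(\mu)-\tfrac{K}{8}\W_\G(\mu_0,\mu_1)^2,
\end{equation*}
so that $\ent(\mu_{1/2})+\tfrac{K}{8}\W_\G(\mu_0,\mu_1)^2\le\ent(\mu)$; since the left side tends to $+\infty$ (the entropy term dominating the $O(N^2)$ correction when $K\le 0$, and even more so when $K>0$), we reach a contradiction for every $K\in\R$.

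I expect the main obstacle to be the rigorous lower bound on $\ent(\mu_{1/2})$ uniformly over \emph{all} $\W_\G$-geodesics joining the two measures, not merely the ``obvious'' translated one: a priori the optimal transport could be realized by a nonobvious plan. The way I would handle this is to first prove that because $\mu_0,\mu_1$ are translates supported on disjoint far-apart sets and $\dcc$ is left invariant, any optimal plan is supported near the graph of a horizontal translation up to lower-order corrections, and then invoke the structure of geodesics in $\prob_2$ over a geodesic space (they are given by midpoint interpolation of an optimal plan, cf.~\cite{AGS08}*{Chapter~7}) to reduce to estimating the Jacobian of the Carnot midpoint map. That Jacobian estimate is where non-commutativity enters decisively: for a commutative group it equals $2^{-n}$ everywhere and nothing degenerates, matching the fact that $\R^n$ is $CD(0,\infty)$. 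A clean alternative, which I would use to streamline the argument, is the contraction/Brunn--Minkowski route: if $(\G,\dcc,\leb^n)$ were $CD(K,\infty)$ it would satisfy a measure-contraction-type inequality forcing $\leb^n\big(M_{1/2}(A,B)\big)$ to be bounded below in terms of $\leb^n(A),\leb^n(B)$ and $\dcc$-diameters, where $M_{1/2}(A,B)$ is the set of geodesic midpoints; one then contradicts this with an explicit pair of small balls $A=\Bcc(e,r)$, $B=\Bcc(x_N,r)$ whose midpoint set is known (by the ball-box estimate~\eqref{eq:def_box_norm} and the dilation structure) to have $\leb^n$-measure of order $r^Q$ times a factor decaying in $N$, which is the quantitative manifestation of the failure of displacement convexity in non-commutative Carnot groups and recovers Juillet's result for $\mathbb{H}^n$ as a special case.
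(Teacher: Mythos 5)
Your strategy is a genuinely different route from the paper's: you try to contradict the displacement convexity inequality \eqref{intro_eq:K-convexity} directly by a midpoint/Brunn--Minkowski argument in the spirit of Juillet's proof for $\mathbb{H}^n$, whereas the paper argues by contradiction through functional inequalities: a $CD(K,\infty)$ Carnot group would be $RCD(K,\infty)$ (quadratic Cheeger energy), hence satisfy the Bakry--\'Emery gradient estimate $BE(K,\infty)$, hence a reverse Poincar\'e inequality with constant $2I_{2K}(t)\sim t$ as $t\to0^+$; comparing with the \emph{sharp} reverse Poincar\'e constant $t/\Lambda$, $\Lambda\in[\tfrac{Q}{2m_1},\tfrac{Q}{m_1}]$, of Baudoin--Bonnefont forces $Q\le m_1$, i.e.\ commutativity. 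The paper's detour through semigroup estimates exists precisely because the geometric route you propose is not available for a general Carnot group, and as written your argument has genuine gaps. The decisive step --- that the midpoint map degenerates, or that $\leb^n(M_{1/2}(A,B))$ is much smaller than $\sqrt{\leb^n(A)\leb^n(B)}$ --- is asserted (``one shows that\dots'', ``is known by the ball-box estimate\dots'') rather than proved. In a general Carnot group of step $\ge3$ the geodesics are not explicit and abnormal minimizers appear, so there is no analogue of the Jacobian computations that Juillet performs in $\mathbb{H}^n$; nothing in the ball-box estimate \eqref{eq:def_box_norm} or the dilation structure yields the required decay of midpoint sets. Moreover, the direction of separation you choose is the wrong one: for $x_N=\delta_N(\bar x)$ with $\bar x\in V_1$, the segment $s\mapsto\delta_{sN}(\bar x)$ is itself a horizontal geodesic and the midpoint map in such configurations shows no degeneracy tied to $[V_1,V_1]\neq\{0\}$; in Juillet's argument the separation is taken along the \emph{center} (vertical) direction, which is where the contraction anomaly lives.

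The quantitative scaling is also off for $K<0$. The inequality \eqref{intro_eq:K-convexity} at $t=1/2$ with $K<0$ only requires $\ent(\mu_{1/2})\le\ent(\mu)+\tfrac{|K|}{8}\W_\G(\mu_0,\mu_1)^2$, so with $\W_\G(\mu_0,\mu_1)\sim N$ you would need $\ent(\mu_{1/2})$ to exceed $\ent(\mu)$ by more than $cN^2$, i.e.\ the effective support of the midpoint measure to shrink like $e^{-cN^2}$; you give no reason for such superquadratic blow-up, and your parenthetical ``the entropy term dominating the $O(N^2)$ correction when $K\le0$'' is exactly the unproved claim. The correct regime (used by Juillet) is the opposite one: take the two measures small and \emph{close together}, so that the term $\tfrac{K}{8}\W_\G^2$ is negligible and a fixed-size entropy deficit at the midpoint already yields the contradiction for every $K\in\R$ simultaneously. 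Even with that fix, you would still owe the uniform lower bound on $\ent(\mu_{1/2})$ over \emph{all} Wasserstein geodesics, which for a general non-commutative $\G$ is an open geometric problem; this is the missing idea that the paper's reverse-Poincar\'e argument is designed to avoid.
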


\begin{proof}
By contradiction, assume that $(\G,\dcc,\leb^n)$ is a $CD(K,\infty)$ space for some $K\in\R$. Since the Dirichlet--Cheeger energy associate to the horizontal gradient is \emph{quadratic} on $L^2(\G,\leb^n)$ (see~\cite{AGS14-2}*{Section~4.3} for a definition), by~\cite{AGMR15}*{Theorem~6.1} we deduce that $(\G,\dcc,\leb^n)$ is a ($\sigma$-finite) $RCD(K,\infty)$ space. By~\cite{AGMR15}*{Theorem~7.2}, we deduce that $(\G,\dcc,\leb^n)$ satisfies the $BE(K,\infty)$ property, that is,
\begin{equation}\label{eq:BE}
\|\nabla_\G (P_t f)\|_\G^2\le e^{-2Kt} P_t(\|\nabla f\|_\G^2),
\qquad
\text{for all }
t\ge 0,\ f\in C^\infty_c(\R^n).
\end{equation}
Here and in the rest of the proof, we set $P_t f:=f\star\h_t$ for short. Arguing similarly as in the proof of~\cite{W11}*{Theorem~1.1}, it is possible to prove that~\eqref{eq:BE} is equivalent to the following \emph{reverse Poincaré inequality}
\begin{equation}\label{eq:reverse_P}
P_t(f^2)-(P_t f)^2\ge 2I_{2K}(t)\,\|\nabla (P_t f)\|_\G^2,
\qquad
\text{for all }
t\ge 0,\ f\in C^\infty_c(\R^n),
\end{equation}
where $I_K(t):=\frac{e^{Kt}-1}{K}$ if $K\ne0$ and $I_0(t):=t$. Now, by~\cite{BB16}*{Propositions~2.5 and~2.6}, there exists a constant $\Lambda\in\left[\frac{Q}{2m_1},\frac{Q}{m_1}\right]$ (where~$Q$ and~$m_1$ are as in \cref{subsec:carnot_groups}) such that the inequality
\begin{equation}\label{eq:reverse_P_sharp}
P_t(f^2)-(P_t f)^2\ge\frac{t}{\Lambda}\,\|\nabla (P_t f)\|_\G^2,
\qquad
\text{for all }
t\ge 0,\ f\in C^\infty_c(\R^n),
\end{equation}
holds true and, moreover, is sharp. Comparing~\eqref{eq:reverse_P} and~\eqref{eq:reverse_P_sharp}, we thus must have that $\Lambda\le\frac{t}{2I_{2K}(t)}$ for all $t>0$. Passing to the limit as $t\to0^+$, we get that $\Lambda\le\frac{1}{2}$, so that $Q\le m_1$. This immediately implies that~$\G$ is commutative, a contradiction.  
\end{proof}

\section{Proof of the main result}
\label{sec:proof_of_main_result}

\subsection{Heat diffusions are gradient flows of the entropy}

In this section we prove the first part of \cref{th:main}. The argument follows the strategy outlined in~\cite{J14}*{Section~4.1}. 

The following technical lemma will be applied to horizontal vector fields in the proof of \cref{prop:entropy_dissipation} below. The proof is exactly the same of~\cite{J14}*{Lemma~4.1} and we omit it.

\begin{lemma}\label{lemma:divergence}
Let $V\colon\R^n\to\R^n$ be a vector field with locally Lipschitz coefficients such that $|V|_{\R^n}\in L^1(\R^n)$ and $\diverg V\in L^1(\R^n)$. Then $\int_{\R^n}\diverg V\ dx=0$.
\end{lemma}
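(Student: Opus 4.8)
The plan is to prove the identity $\int_{\R^n}\diverg V\ dx=0$ by a standard truncation argument, combined with the divergence theorem on smooth bounded domains and a Lipschitz approximation to handle the merely locally Lipschitz regularity of $V$. First I would fix a cut-off function: let $\chi\in C^\infty_c(\R^n)$ with $0\le\chi\le 1$, $\chi\equiv 1$ on the unit ball $B_1$, $\supp\chi\subset B_2$, and $|\nabla\chi|_{\R^n}\le 2$, and set $\chi_R(x):=\chi(x/R)$, so that $|\nabla\chi_R|_{\R^n}\le 2/R$ and $\chi_R\to 1$ pointwise as $R\to+\infty$.

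The core computation is the integration by parts
\begin{equation*}
\int_{\R^n}\chi_R\,\diverg V\ dx=-\int_{\R^n}\scalar*{\nabla\chi_R,V}_{\R^n}\ dx,
\end{equation*}
which holds because $\chi_R V$ is a compactly supported, locally Lipschitz (hence $W^{1,1}_{\rm loc}$) vector field, so the divergence theorem applies on any large ball containing $\supp\chi_R$ — or, to be fully rigorous given only local Lipschitz regularity, one mollifies $V$ on a neighbourhood of $\supp\chi_R$, applies the classical identity to the smooth approximations, and passes to the limit using that mollification converges in $W^{1,1}$ on compact sets. Once this identity is in hand, I let $R\to+\infty$. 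On the left-hand side, $\chi_R\,\diverg V\to\diverg V$ pointwise with $|\chi_R\,\diverg V|\le|\diverg V|\in L^1(\R^n)$, so dominated convergence gives $\int_{\R^n}\chi_R\,\diverg V\ dx\to\int_{\R^n}\diverg V\ dx$. On the right-hand side,
\begin{equation*}
\left|\int_{\R^n}\scalar*{\nabla\chi_R,V}_{\R^n}\ dx\right|\le\frac{2}{R}\int_{\set*{R\le|x|\le 2R}}|V|_{\R^n}\ dx\le\frac{2}{R}\int_{\R^n}|V|_{\R^n}\ dx\longto 0,
\end{equation*}
using $|V|_{\R^n}\in L^1(\R^n)$. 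Combining the two limits yields $\int_{\R^n}\diverg V\ dx=0$.

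The only genuinely delicate point is justifying the integration-by-parts identity for a vector field that is merely locally Lipschitz rather than $C^1$; this is handled by the mollification step described above, noting that on the compact set $\supp\chi_R$ the mollified fields $V_\delta$ converge to $V$ in $W^{1,1}$, so both $\int\chi_R\,\diverg V_\delta\to\int\chi_R\,\diverg V$ and $\int\scalar*{\nabla\chi_R,V_\delta}_{\R^n}\to\int\scalar*{\nabla\chi_R,V}_{\R^n}$ as $\delta\to 0$. Everything else is routine dominated convergence and the elementary bound on the annular integral of $|V|_{\R^n}$.
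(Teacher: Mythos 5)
Your proof is correct and is essentially the standard argument that the paper itself defers to (the proof is omitted in the text with a reference to Juillet's Lemma~4.1, which proceeds by exactly this cut-off/integration-by-parts scheme): multiply by $\chi_R$, integrate by parts after a mollification to handle the merely locally Lipschitz regularity, and kill the commutator term using $|V|_{\R^n}\in L^1(\R^n)$ and the $O(1/R)$ bound on $\nabla\chi_R$. No gaps.
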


\cref{prop:entropy_dissipation} below states that the function $t\mapsto\ent(\rho_t\leb^n)$ is locally absolutely continuous if $(\rho_t)_{t\ge0}$ solves the sub-elliptic heat equation~\eqref{eq:heat_diffusion} with initial datum $\rho_0\in L^1(\G)$ such that $\mu_0=\rho_0\leb^n\in\prob_2(\G)$. By~\cite{AGS14}*{Proposition~4.22}, this result is true under the stronger assumption that $\rho_0\in L^1(\G)\cap L^2(\G)$. Here the point is to remove the $L^2$-integrability condition on the initial datum exploiting the estimates on the heat kernel collected in \cref{th:property_heat_kernel}, see also~\cite{J14}*{Section~4.1.1}.  

\begin{proposition}[Entropy dissipation]\label{prop:entropy_dissipation}
Let $\rho_0\in L^1(\G)$ be such that $\mu_0=\rho_0\leb^n\in\dom(\ent)$. If $(\rho_t)_{t\ge0}$ solves the sub-elliptic heat equation $\de_t\rho_t=\Delta_\G\rho_t$ with initial datum~$\rho_0$, then the map $t\mapsto\ent(\mu_t)$, $\mu_t=\rho_t\leb^n$, is locally absolutely continuous on $(0,+\infty)$ and it holds 
\begin{equation}\label{eq:entropy_dissipation}
\frac{d}{dt}\ent(\mu_t)=-\int_{\G\cap\set*{\rho_t>0}}\frac{\|\nabla_\G\rho_t\|^2_\G}{\rho_t}\ dx
\qquad
\text{for a.e.}\ t>0.
\end{equation}
\end{proposition}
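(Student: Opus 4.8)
The plan is to prove~\eqref{eq:entropy_dissipation} first under a convenient positivity and integrability assumption on the kernel-regularized density, and then remove the extra hypotheses by a limiting argument. More precisely, I would proceed as follows.

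\textbf{Step 1: Reduction to strictly positive, smooth densities.} By the representation formula~\eqref{eq:sol_heat_diff} and \cref{th:property_heat_kernel}, for every $\tau>0$ the function $\rho_\tau$ is smooth and everywhere strictly positive on~$\G$, and $\rho_{t}=\rho_{t-\tau}\star\h_\tau$ for $t>\tau$, so $(\rho_{t})_{t\ge\tau}$ is again a heat diffusion with smooth strictly positive initial datum $\rho_\tau$. Since local absolute continuity and~\eqref{eq:entropy_dissipation} are local-in-time statements on $(0,+\infty)$, it suffices to prove the claim assuming in addition that $\rho_0$ is smooth and strictly positive (renaming $\rho_\tau$ as $\rho_0$). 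In this regime $\log\rho_t$ is smooth and the computations below are legitimate pointwise.

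\textbf{Step 2: Formal differentiation and the key identity.} Differentiating under the integral sign (justified by the derivative estimates~\eqref{eq:heat_estimate_derivatives} and the Gaussian bound~\eqref{eq:heat_estimate_above}, which give domination on compact time intervals $[a,b]\subset(0,+\infty)$), one computes
\[
\frac{d}{dt}\ent(\mu_t)=\frac{d}{dt}\int_\G \rho_t\log\rho_t\ dx=\int_\G (\log\rho_t+1)\,\de_t\rho_t\ dx=\int_\G(\log\rho_t+1)\,\Delta_\G\rho_t\ dx.
\]
Now $\int_\G \Delta_\G\rho_t\ dx=0$ (since $\rho_t\leb^n$ has constant total mass; alternatively, apply \cref{lemma:divergence} to $V=\nabla_\G\rho_t$, whose integrability follows from~\eqref{eq:heat_estimate_derivatives}), and integrating by parts,
\[
\int_\G \log\rho_t\,\Delta_\G\rho_t\ dx=-\int_\G \scalar*{\nabla_\G\log\rho_t,\nabla_\G\rho_t}_\G\ dx=-\int_\G\frac{\|\nabla_\G\rho_t\|_\G^2}{\rho_t}\ dx,
\]
which is exactly~\eqref{eq:entropy_dissipation}. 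The integration by parts is where \cref{lemma:divergence} is used properly: one writes $\log\rho_t\,\Delta_\G\rho_t=\diverg_\G(\log\rho_t\,\nabla_\G\rho_t)-\|\nabla_\G\rho_t\|_\G^2/\rho_t$ and checks, again via \cref{th:property_heat_kernel}, that both $\log\rho_t\,\nabla_\G\rho_t$ and its horizontal divergence are in $L^1(\G)$ on the relevant time slice, so the divergence term integrates to zero. Finally, local absolute continuity of $t\mapsto\ent(\mu_t)$ on $(0,+\infty)$ follows because the right-hand side of~\eqref{eq:entropy_dissipation}, i.e.\ $-\F_\G(\rho_t)$, is locally bounded (indeed locally integrable) in $t$ by the same kernel estimates, so $\ent(\mu_t)$ is the integral of an $L^1_{\rm loc}$ function.

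\textbf{Step 3: Continuity up to the initial time and the general case.} It remains to handle the original $\rho_0\in\dom(\ent)$ without the smoothing: one must check that no mass or entropy is lost as $\tau\to0$. Here I would use that $\rho_\tau\to\rho_0$ in $L^1(\G)$ together with $\mu_\tau\xrightarrow{\W_\G}\mu_0$ (which follows from~\eqref{eq:heat_second_moment} and weak convergence) and the lower semicontinuity of $\ent$ in $(\prob_2(\G),\W_\G)$ to get $\liminf_{\tau\to0}\ent(\mu_\tau)\ge\ent(\mu_0)$; the reverse inequality $\limsup_{\tau\to0}\ent(\mu_\tau)\le\ent(\mu_0)$ comes from the fact that $t\mapsto\ent(\mu_t)$ is non-increasing along the heat flow (a consequence of Step 2 applied on $(\tau,+\infty)$, since $-\F_\G\le0$), so that $\ent(\mu_t)$ extends continuously to $t=0$ with value $\ent(\mu_0)$, and local absolute continuity on $(0,+\infty)$ together with monotonicity upgrades to the stated conclusion.

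\textbf{Main obstacle.} The delicate point is the rigorous justification of the differentiation under the integral sign and of the two integration-by-parts steps \emph{only} using the weak heat-kernel bounds of \cref{th:property_heat_kernel} (rather than the sharper Li-type estimates available in $\mathbb{H}^n$): one needs that $\rho_t$, $\nabla_\G\rho_t$, $\Delta_\G\rho_t$, $\log\rho_t\,\nabla_\G\rho_t$ and $\|\nabla_\G\rho_t\|_\G^2/\rho_t$ all lie in $L^1(\G)$ with bounds locally uniform in $t\in(0,+\infty)$, and that $\log\rho_t$ does not grow too fast at infinity. The lower bound~\eqref{eq:heat_estimate_below} controls $-\log\rho_t$ from above by a quantity comparable to $\dcc(x,0)^2/t$, the upper bound~\eqref{eq:heat_estimate_above} controls $\log\rho_t$ from above similarly, and~\eqref{eq:heat_estimate_derivatives} controls the horizontal derivatives; combining these with the finiteness of the second $\dcc$-moment~\eqref{eq:heat_second_moment} and of $\ent(\mu_t)$ is what makes all the integrals converge. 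Carrying out these estimates carefully — in particular controlling $\F_\G(\rho_t)$ and the boundary term in the integration by parts — is the technical heart of the proof.
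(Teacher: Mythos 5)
Your overall architecture (differentiate under the integral sign, integrate by parts via \cref{lemma:divergence}, deduce local absolute continuity from local integrability of the Fisher information) is the same as the paper's, and your ``main obstacle'' paragraph correctly locates where the work is. But there is a genuine gap exactly there: you integrate by parts with the \emph{untruncated} field $V=\log\rho_t\,\nabla_\G\rho_t$ and invoke \cref{lemma:divergence}, whose hypothesis is $|V|_{\R^n}\in L^1(\R^n)$ --- the \emph{Euclidean} length, which exceeds $\|V\|_\G$ by a factor $p(x)$ of polynomial growth because the horizontal fields have polynomial coefficients (of degree growing with the step $\kappa$). Since $-\log\rho_t(x)$ can genuinely grow like $\dcc(x,0)^2/t$ (e.g.\ for compactly supported $\rho_0$; note that~\eqref{eq:heat_estimate_below} bounds $\h_t$ from below, not $\rho_t$, so even this requires an extra Jensen-type argument), the natural estimate for $\int_\G|V|_{\R^n}\,dx$ is of the form $\int_\G(1+\dcc(x,0)^{2+N})\,(\rho_0\star\|\nabla_\G\h_t\|_\G)\,dx$ with $N\ge1$, and controlling it requires moments of $\mu_0$ of order strictly higher than two, which are not available from $\mu_0\in\prob_2(\G)\cap\dom(\ent)$. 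The same unboundedness of $\log\rho_t$ also makes the domination needed for $\frac{d}{dt}\int_\G\rho_t\log\rho_t\,dx=\int_\G(\log\rho_t+1)\Delta_\G\rho_t\,dx$ more delicate than you indicate. Your Step~1 does not help: for $t>0$ the density is automatically smooth and strictly positive, and the obstruction is the behaviour at spatial infinity, which time-shifting does not improve. (Step~3 is not needed at all, since the statement only concerns $(0,+\infty)$.)

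The paper's proof is built precisely to avoid this: it replaces $1+\log r$ by the bounded truncation $z_m(r)=\min\{m,\max\{1+\log r,-m\}\}$ and works with $H_m(r)=\int_0^r z_m(s)\,ds$. Then $V_m=z_m(\rho_t)\nabla_\G\rho_t$ satisfies $\int_\G|V_m|_{\R^n}\,dx\le m\int_\G|\nabla_\G\h_t|_{\R^n}\,dx<+\infty$ with no moment assumptions on $\rho_0$, because the polynomial weight falls only on the Gaussian-decaying kernel; \cref{lemma:divergence} then applies, the resulting identity is integrated in time, and one passes to the limit $m\to+\infty$ by monotone and dominated convergence, using the a priori bound $\|\nabla_\G\rho_t\|_\G^2/\rho_t\le\rho_0\star\bigl(\|\nabla_\G\h_t\|_\G^2/\h_t\bigr)\in L^1(\G)$ obtained by Cauchy--Schwarz (which you do identify). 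To repair your argument you should either introduce this truncation, or replace \cref{lemma:divergence} by a cutoff argument requiring only $\|V\|_\G\in L^1(\G)$ (test $\diverg V$ against functions $\chi_R$ with $\|\nabla_\G\chi_R\|_\G\le C/R$, which works because the $X_i$ are divergence-free) together with a proof that $\int_\G(1+\dcc(x,0)^2)\,\rho_0\star\|\nabla_\G\h_t\|_\G\,dx<+\infty$; as written, the integration by parts is not justified.
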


\begin{proof}
Note that $\mu_t\in\prob_2(\G)$ for all $t>0$ by~\eqref{eq:heat_second_moment}. Hence $\ent(\mu_t)>-\infty$ for all $t>0$. Since $C_t:=\sup_{x\in\G}\h_t(x)<+\infty$ for each fixed $t>0$ by~\eqref{eq:heat_estimate_above}, we get that $\rho_t\le C_t$ for all $t>0$. Thus $\ent(\mu_t)<+\infty$ for all $t>0$.

For each $m\in\N$, define 
\begin{equation}\label{eq:def_truncated_entropy}
z_m(r):=\min\set*{m,\max\set*{1+\log r,-m}}, \qquad H_m(r):=\int_0^r z_m(s)\ ds, \qquad r\ge0.
\end{equation}
Note that $H_m$ is of class $C^1$ on $[0,+\infty)$ with $H_m'$ is globally Lipschitz and bounded. We claim that
\begin{equation}\label{eq:deriv_1}
\frac{d}{dt}\int_\G H_m(\rho_t)\ dx=\int_\G z_m(\rho_t)\Delta_\G\rho_t\ dx
\qquad
\forall t>0,\ \forall m\in\N.
\end{equation}
Indeed, we have $|H_m(\rho_t)|\le m\rho_t\in L^1(\G)$ and, given $[a,b]\subset(0,+\infty)$, by~\eqref{eq:heat_estimate_derivatives} the function $x\mapsto\sup_{t\in[a,b]}|\Delta_\G\h_t(x)|$ is bounded. Thus
\begin{equation*}
\sup_{t\in[a,b]}\left|\frac{d}{dt}H_m(\rho_t)\right|\le m\sup_{t\in[a,b]}\left(\rho_0\star|\Delta_\G\h_t|\right)\le m\,\rho_0\star\sup_{t\in[a,b]}|\Delta_\G\h_t|\in L^1(\G).
\end{equation*}
Therefore~\eqref{eq:deriv_1} follows by differentiation under integral sign. We now claim that
\begin{equation}\label{eq:deriv_2}
\int_\G z_m(\rho_t)\Delta_\G\rho_t\ dx=-\int_{\set*{e^{-m-1}<\rho_t<e^{m-1}}} \frac{\|\nabla_\G\rho_t\|_\G^2}{\rho_t}\ dx
\qquad
\forall t>0,\ \forall m\in\N.
\end{equation}
Indeed, by Cauchy--Schwarz inequality, we have
\begin{equation}\label{eq:C-S_for_horizontal_grad}
\begin{split}
\frac{\|\nabla_\G\rho_t(x)\|_\G^2}{\rho_t(x)}
&\le\frac{\left[(\rho_0\star\|\nabla_\G\h_t\|_\G)(x)\right]^2}{(\rho_0\star\h_t)(x)}\\
&=\frac{1}{(\rho_0\star\h_t)(x)}\left[\int_\G\sqrt{\rho_0(xy^{-1})}\,\frac{\|\nabla_\G\h_t(y)\|_\G}{\sqrt{\h_t(y)}}\cdot\sqrt{\rho_0(xy^{-1})}\,\sqrt{\h_t(y)}\ dy\right]^2\\
&\le\frac{1}{(\rho_0\star\h_t)(x)}
	\left(\int_\G\rho_0(xy^{-1})\,\frac{\|\nabla_\G\h_t(y)\|_\G^2}{\h_t(y)}\ dy\right)
	\left(\int_\G\rho_0(xy^{-1})\,\h_t(y)\ dy\right)\\
&\le\left(\rho_0\star\frac{\|\nabla_\G\h_t\|_\G^2}{\h_t}\right)(x)
\qquad
\text{for all}\ x\in\G.
\end{split}
\end{equation}
Thus, by~\eqref{eq:heat_estimate_below} and~\eqref{eq:heat_estimate_derivatives}, we get
\begin{equation}\label{eq:go_to_h}
\int_{\set*{e^{-m-1}<\rho_t<e^{m-1}}}\frac{\|\nabla_\G\rho_t\|_\G^2}{\rho_t}\ dx\le\int_\G\rho_0\star\frac{\|\nabla_\G\h_t\|_\G^2}{\h_t}\ dx=\int_\G\frac{\|\nabla_\G\h_t\|_\G^2}{\h_t}\ dx<+\infty.
\end{equation}
This, together with~\eqref{eq:deriv_1}, proves that
\begin{align}\label{eq:diverg_L1}
\diverg(z_m(\rho_t)\nabla_\G\rho_t)=z_m'(\rho_t)\|\nabla_\G\rho_t\|_\G^2-z_m(\rho_t)\Delta_\G\rho_t\in L^1(\G).
\end{align}
Thus~\eqref{eq:deriv_2} follows by integration by parts provided that
\begin{equation}\label{eq:diverg_0}
\int_\G\diverg(z_m(\rho_t)\nabla_\G\rho_t)\ dx=0.
\end{equation}
To prove~\eqref{eq:diverg_0}, we apply \cref{lemma:divergence} to the vector field $V=z_m(\rho_t)\nabla_\G\rho_t$. By~\eqref{eq:diverg_L1}, we already know that $\diverg V\in L^1(\G)$, so we just need to prove that $|V|_{\R^n}\in L^1(\G)$. Note that
\begin{equation*}
\int_\G|V|_{\R^n}\ dx\le m\int_\G\rho_0\star|\nabla_\G\h_t|_{\R^n}\ dx=m\int_\G |\nabla_\G\h_t|_{\R^n}\ dx,
\end{equation*}
so it is enough to prove that $|\nabla_\G\h_t|_{\R^n}\in L^1(\G)$. But we have
\begin{equation*}
|\nabla_\G\h_t(x)|_{\R^n}\le p(x_1,\dots,x_n)\|\nabla_\G\h_t(x)\|_\G, \qquad x\in\G,
\end{equation*}
where $p\colon\R^n\to[0,+\infty)$ is a function with polynomial growth, because the horizontal vector fields $X_1,\dots,X_{h_1}$ have polynomial coefficients. Since $\dcc$ is equivalent to $\dd_\infty$, where~$\dd_\infty$ was introduced in~\eqref{eq:def_box_norm}, by~\eqref{eq:heat_estimate_derivatives} we conclude that $|\nabla_\G\h_t|_{\R^n}\in L^1(\G)$. This completes the proof of~\eqref{eq:diverg_0}. 

Combining~\eqref{eq:deriv_1} and~\eqref{eq:deriv_2}, we thus get
\begin{equation*}
\frac{d}{dt}\int_\G H_m(\rho_t)\ dx=-\int_{\set*{e^{-m-1}<\rho_t<e^{m-1}}} \frac{\|\nabla_\G\rho_t\|_\G^2}{\rho_t}\ dx
\qquad
\forall t>0,\ \forall m\in\N.
\end{equation*}
Note that
\begin{equation}\label{eq:fisher_L1_loc}
t\mapsto\int_\G \frac{\|\nabla_\G\rho_t\|_\G^2}{\rho_t}\ dx\in L^1_{\rm loc}(0,+\infty).
\end{equation}
Indeed, from~\eqref{eq:heat_estimate_below} and~\eqref{eq:heat_estimate_derivatives} we deduce that
\begin{equation*}
t\mapsto\int_\G \frac{\|\nabla_\G\h_t\|_\G^2}{\h_t}\ dx\in L^1_{\rm loc}(0,+\infty).
\end{equation*}
Recalling~\eqref{eq:C-S_for_horizontal_grad} and~\eqref{eq:go_to_h}, this immediately implies~\eqref{eq:fisher_L1_loc}. Therefore
\begin{equation*}
\int_\G H_m(\rho_{t_1})\ dx-\int_\G H_m(\rho_{t_0})\ dx=-\int_{t_0}^{t_1}\int_{\set*{e^{-m-1}<\rho_t<e^{m-1}}} \frac{\|\nabla_\G\rho_t\|_\G^2}{\rho_t}\ dx dt
\end{equation*}
for any $t_0,t_1\in(0,+\infty)$ with $t_0<t_1$ and $m\in\N$. We now pass to the limit as $m\to+\infty$. Note that $H_m(r)\to r\log r$ as $m\to+\infty$ and that for all $m\in\N$
\begin{equation}\label{eq:truncated_ent_prop_1}
r\log r\le H_{m+1}(r)\le H_m(r) \qquad\text{for}\ r\in[0,1]
\end{equation}
and
\begin{equation}\label{eq:truncated_ent_prop_2}
0\le H_m(r)\le 1+r\log r \qquad\text{for}\ r\in[1,+\infty).
\end{equation}
Thus
\begin{equation*}
\lim_{m\to+\infty}\int_\G H_m(\rho_t)\ dx=\int_\G \rho_t\log\rho_t\ dx
\end{equation*}
by the monotone convergence theorem on $\set*{\rho_t\le1}$ and by the dominated convergence theorem on $\set*{\rho_t>1}$. Moreover  
\begin{equation*}
\lim_{m\to+\infty}\int_{t_0}^{t_1}\int_{\set*{e^{-m-1}<\rho_t<e^{m-1}}} \frac{\|\nabla_\G\rho_t\|_\G^2}{\rho_t}\ dx dt
=\int_{t_0}^{t_1}\int_{\G\cap\set*{\rho_t>0}} \frac{\|\nabla_\G\rho_t\|_\G^2}{\rho_t}\ dx dt
\end{equation*}
by the monotone convergence theorem. This concludes the proof.
\end{proof}

We are now ready to prove the first part of \cref{th:main}. The argument follows the strategy outlined in~\cite{J14}*{Section~4.1}. See also the first part of the proof of~\cite{AGS14}*{Theorem~8.5}.

\begin{theorem}\label{th:heat_diff_implies_GF_ent}
Let $\rho_0\in L^1(\G)$ be such that $\mu_0=\rho_0\leb^n\in\dom(\ent)$. If $(\rho_t)_{t\ge0}$ solves the sub-elliptic heat equation $\de_t\rho_t=\Delta_\G\rho_t$ with initial datum $\rho_0$, then $\mu_t=\rho_t\leb^n$ is a gradient flow of $\ent$ in $(\prob_2(\G),\W_\G)$ starting from~$\mu_0$.
\end{theorem}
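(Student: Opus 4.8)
The strategy is to verify the energy dissipation inequality~\eqref{eq:def_EDI} for the curve $(\mu_t)_{t\ge0}$, $\mu_t=\rho_t\leb^n$, with $E=\ent$ in $(\prob_2(\G),\W_\G)$. Since \cref{prop:entropy_dissipation} already shows that $t\mapsto\ent(\mu_t)$ is locally absolutely continuous on $(0,+\infty)$ with
\begin{equation*}
\frac{d}{dt}\ent(\mu_t)=-\F_\G(\rho_t)
\qquad
\text{for a.e.}\ t>0,
\end{equation*}
by \cref{rem:GF_AC} it suffices to prove the two matching lower bounds
\begin{equation*}
|\dot{\mu}_t|_\G^2\le\F_\G(\rho_t)
\qquad\text{and}\qquad
|\D^-_\G\ent|^2(\mu_t)\le\F_\G(\rho_t)
\qquad
\text{for a.e.}\ t>0,
\end{equation*}
together with the correct behaviour at the endpoint $t=0$ (lower semicontinuity of $\ent$ along $\mu_t\xrightarrow{\W_\G}\mu_0$, which holds because $\ent$ is lower semicontinuous in $(\prob_2(\G),\W_\G)$ and $\mu_t\xrightarrow{\W_\G}\mu_0$ as $t\to0$ follows from the $L^1$-convergence $\rho_t\to\rho_0$ plus the second-moment bound~\eqref{eq:heat_second_moment}). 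Indeed, given these bounds, for $0<s\le t$ one integrates the identity and uses Young's inequality to get~\eqref{eq:def_EDI} on $(0,+\infty)$, and then lets $s\to0^+$.

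\textbf{The metric-derivative bound.} First I would show that $(\rho_t\leb^n)_{t>0}$ solves the continuity equation~\eqref{eq:CE} with the horizontal velocity field $v^\G_t=-\nabla_\G\rho_t/\rho_t$ (defined $\mu_t$-a.e.). This is a direct computation: for $\phi\in C^\infty_c(I\times\R^n)$, using $\de_t\rho_t=\Delta_\G\rho_t=\diverg_\G(\nabla_\G\rho_t)$ and integrating by parts in space,
\begin{equation*}
\frac{d}{dt}\int_\G\phi(t,\cdot)\,\rho_t\ dx
=\int_\G\de_t\phi\,\rho_t\ dx+\int_\G\phi\,\Delta_\G\rho_t\ dx
=\int_\G\de_t\phi\,\rho_t\ dx-\int_\G\scalar*{\nabla_\G\phi,\nabla_\G\rho_t}_\G\ dx,
\end{equation*}
and $\nabla_\G\rho_t=v^\G_t\rho_t$ gives exactly~\eqref{eq:CE}; the integrability $\int_\G\|v^\G_t\|_\G\,d\mu_t=\int_\G\|\nabla_\G\rho_t\|_\G\ dx<+\infty$ and the $L^2_{\rm loc}$-in-time bound on $\|v^\G_t\|_{L^2_\G(\mu_t)}^2=\F_\G(\rho_t)$ follow from~\eqref{eq:fisher_L1_loc} and the heat-kernel estimates (here I must be a little careful justifying the integrations by parts, exactly as in the proof of \cref{prop:entropy_dissipation} via \cref{lemma:divergence}, since $\rho_t$ is not compactly supported). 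Then the converse part of \cref{prop:CE} yields $|\dot{\mu}_t|_\G^2\le\|v^\G_t\|_{L^2_\G(\mu_t)}^2=\F_\G(\rho_t)$ for a.e.\ $t>0$.

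\textbf{The slope bound.} For the descending-slope estimate I would invoke \cref{prop:slope_ent_G_special}: it suffices to check that $|\D^-_\eps\ent|(\mu_t)<+\infty$ for some $\eps>0$ and all (or a.e.) $t>0$, and then $|\D^-_\G\ent|^2(\mu_t)=\F_\G(\rho_t)$. To get the finiteness of the Riemannian slope, apply \cref{prop:slope_ent_eps} with $\eps=1$: since $(\G_1,\dd_1,\leb^n)$ is a Riemannian manifold with Ricci bounded below and $\rho_t$ is smooth, positive, bounded (by~\eqref{eq:heat_estimate_above}), with $\nabla_1\rho_t=w^1\rho_t$ where $w^1=\nabla_1\rho_t/\rho_t$, I only need $\int_\G\|\nabla_1\rho_t\|_1^2/\rho_t\ dx<+\infty$; but $\|\nabla_1\rho_t\|_1^2=\|\nabla_\G\rho_t\|_\G^2+\sum_{i\ge2}\|\nabla_{V_i}\rho_t\|_\G^2$, and each of these terms, divided by $\rho_t$, is integrable by the Cauchy--Schwarz trick~\eqref{eq:C-S_for_horizontal_grad} applied layer by layer together with the derivative estimates~\eqref{eq:heat_estimate_derivatives} and the lower bound~\eqref{eq:heat_estimate_below}. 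Hence $|\D^-_1\ent|(\mu_t)<+\infty$, \cref{prop:slope_ent_G_special} applies, and the slope bound follows.

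\textbf{Main obstacle.} The bulk of the analytic work — and the step I expect to be most delicate — is justifying the integration-by-parts formulas globally on $\G$ (both in deriving the continuity equation and in checking that all the ``Fisher-type'' integrals over higher layers are finite), since $\rho_t$ has only Gaussian-type decay rather than compact support; this is handled exactly as in \cref{prop:entropy_dissipation}, combining the heat-kernel bounds of \cref{th:property_heat_kernel} with \cref{lemma:divergence} and the polynomial growth of the coefficients of the vector fields $X_i$. Everything else is a formal assembly: absolute continuity of the energy from \cref{prop:entropy_dissipation}, the two lower bounds above, Young's inequality, and a limiting argument $s\to0^+$ using lower semicontinuity of $\ent$ in $(\prob_2(\G),\W_\G)$.
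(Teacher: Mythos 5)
Your proposal is correct and follows essentially the same route as the paper: local absolute continuity and the dissipation identity from \cref{prop:entropy_dissipation}, the metric-derivative bound via the continuity equation and the converse part of \cref{prop:CE}, and the slope identity via \cref{prop:slope_ent_G_special} after checking $|\D^-_\eps\ent|(\mu_t)<+\infty$ through the layer-by-layer Cauchy--Schwarz argument with the heat-kernel estimates. The only (harmless) differences are that you assemble the EDI directly by Young's inequality and treat the endpoint $s\to0^+$ explicitly, whereas the paper deduces the two equalities $|\dot{\mu}_t|_\G=|\D^-_\G\ent|(\mu_t)$ and $\frac{d}{dt}\ent(\mu_t)=-|\D^-_\G\ent|(\mu_t)\,|\dot{\mu}_t|_\G$ and invokes \cref{rem:GF_AC}.
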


\begin{proof}
Note that $(\mu_t)_{t>0}\subset\prob_2(\G)$, see the proof of \cref{prop:entropy_dissipation}. Moreover, $(\mu_t)_{t>0}$ satisfies~\eqref{eq:CE} with $v^\G_t=\nabla_\G\rho_t/\rho_t$ for $t>0$. Note that $t\mapsto\|v^\G_t\|_{L^2_\G(\mu_t)}\in L^2_{\rm loc}(0,+\infty)$ by~\eqref{eq:heat_estimate_below}, \eqref{eq:heat_estimate_derivatives}, \eqref{eq:C-S_for_horizontal_grad} and~\eqref{eq:go_to_h}. By \cref{prop:CE} we conclude that
\begin{equation}\label{eq:estim_deriv_heat_curve}
|\dot{\mu}_t|^2\le\int_{\G\cap\set*{\rho_t>0}}\frac{\|\nabla\rho_t\|_\G^2}{\rho_t}\ dx
\qquad 
\text{for a.e.}\ t>0.
\end{equation}
By \cref{prop:entropy_dissipation}, the map $t\mapsto\ent(\mu_t)$ is locally absolutely continuous on $(0,+\infty)$ and so, by the chain rule, we get
\begin{equation}\label{eq:chain_rule_ent}
-\frac{d}{dt}\ent(\mu_t)\le|\D^-_\G\ent|(\mu_t)\cdot|\dot{\mu}_t|_\G \qquad 
\text{for a.e.}\ t>0.
\end{equation}
Thus, if we prove that
\begin{equation}\label{eq:slope_ent_finite_heat}
|\D^-_\G\ent|^2(\mu_t)=\int_{\G\cap\set*{\rho_t>0}}\frac{\|\nabla_\G\rho_t\|_\G^2}{\rho_t}\ dx
\qquad 
\text{for a.e.}\ t>0
\end{equation}
then, combining this equality with~\eqref{eq:entropy_dissipation}, \eqref{eq:estim_deriv_heat_curve} and~\eqref{eq:chain_rule_ent}, we find that
\begin{equation*}
|\dot{\mu}_t|_\G=|\D^-_\G\ent|(\mu_t),
\qquad
\frac{d}{dt}\ent(\mu_t)=-|\D^-_\G\ent|(\mu_t)\cdot|\dot{\mu}_t|_\G
\qquad 
\text{for a.e.}\ t>0,
\end{equation*}
so that $(\mu_t)_{t\ge0}$ is a gradient flow of~$\ent$ starting from~$\mu_0$ as observed in \cref{rem:GF_AC}.

We now prove~\eqref{eq:slope_ent_finite_heat}. To do so, we apply \cref{prop:slope_ent_G_special}. We need to check that $|\D^-_\eps\ent|(\mu_t)<+\infty$ for some $\eps>0$. To prove this, we apply \cref{prop:slope_ent_eps}. Since $\deps\le\dcc$, we have $\mu_t\in\prob_2(\G_\eps)$ for all $\eps>0$. Moreover
\begin{equation*}
\F_\eps(\rho_t)=\F_\G(\rho_t)+\sum_{i=2}^k\eps^{2(i-1)}\int_{\G\cap\set*{\rho_t>0}}\frac{\|\nabla_{V_i}\rho_t\|_\G^2}{\rho_t}\ dx.
\end{equation*}
Since $\F_\G(\rho_t)<+\infty$, we just need to prove that
\begin{equation*}
\int_{\G\cap\set*{\rho_t>0}}\frac{\|\nabla_{V_i}\rho_t\|_\G^2}{\rho_t}\ dx<+\infty
\end{equation*}
for all $i=2,\dots,\kappa$. Indeed, arguing as in~\eqref{eq:C-S_for_horizontal_grad}, by Cauchy--Schwarz inequality we have
\begin{align*}
\frac{\|\nabla_{V_i}\rho_t\|_\G^2}{\rho_t}
\le\frac{\left(\rho\star\|\nabla_{V_i}\h_t\|_\G\right)^2}{\rho\star\h_t}
=\frac{1}{\rho\star\h_t}\left[\rho\star\left(\frac{\|\nabla_{V_i}\h_t\|_\G}{\sqrt{\h_t}}\,\sqrt{\h_t}\right)\right]^2
\le\rho\star\frac{\|\nabla_{V_i}\h_t\|_\G^2}{\h_t}.
\end{align*}
Therefore, by~\eqref{eq:heat_estimate_below} and~\eqref{eq:heat_estimate_derivatives}, we get
\begin{equation*}
\int_{\G\cap\set*{\rho_t>0}}\frac{\|\nabla_{V_i}\rho_t\|_\G^2}{\rho_t}\ dx
\le\int_\G\rho\star\frac{\|\nabla_{V_i}\h_t\|_\G^2}{\h_t}\ dx
=\int_\G\frac{\|\nabla_{V_i}\h_t\|_\G^2}{\h_t}\ dx<+\infty.
\end{equation*}
This concludes the proof. 
\end{proof}

\subsection{Gradient flows of the entropy are heat diffusions}

In this section we prove the second part of \cref{th:main}. Our argument is different from the one presented in~\cite{J14}*{Section~4.2}. However, as observed in~\cite{J14}*{Remark~5.3}, the techniques developed in~\cite{J14}*{Section~4.2} can be adapted in order to obtain a proof of \cref{th:GF_ent_implies_heat_diff} below for any Carnot group~$\G$ of step~$2$.

Let us start with the following remark. If $(\mu_t)_{t\ge0}$ is a gradient flow of $\ent$ in $(\prob_2(\G),\W_\G)$ then, recalling \cref{def:metric_GF}, we have that $\mu_t\in\dom(\ent)$ for all $t\ge0$. By~\eqref{eq:def_entropy}, this means that $\mu_t=\rho_t\leb^n$ for some probability density $\rho_t\in L^1(\G)$ for all $t\ge0$. In addition, $t\mapsto|\D_\G^-\ent|(\mu_t)\in L^2_{\rm loc}([0,+\infty))$ and the function $t\mapsto\ent(\mu_t)$ is non-increasing, therefore a.e.\ differentiable and locally integrable on~$[0,+\infty)$. 

\cref{lemma:GF_ent_plus_AC_is_heat_diff} below shows that it is enough to establish~\eqref{eq:GF_ent_plus_AC} in order to prove the second part of \cref{th:main}. For the proof, see also the last paragraph of~\cite{J14}*{Section~4.2}.

\begin{lemma}\label{lemma:GF_ent_plus_AC_is_heat_diff}
Let $\rho_0\in L^1(\G)$ be such that $\mu_0=\rho_0\leb^n\in\dom(\ent)$. Assume $(\mu_t)_{t\ge0}$ is a gradient flow of $\ent$ in $(\prob_2(\G),\W_\G)$ starting from~$\mu_0$, with $\mu_t=\rho_t\leb^n$ for all $t\ge0$. Let $(v^\G_t)_{t>0}$ and $(w^\G_t)_{t>0}$, $w^\G_t=\nabla_\G\rho_t/\rho_t$, be the horizontal time-dependent vector fields given by \cref{prop:CE} and \cref{prop:slope_ent_G_general} respectively. If it holds
\begin{equation}\label{eq:GF_ent_plus_AC}
-\frac{d}{dt}\ent(\mu_t)\le\int_\G\scalar*{-w^\G_t,v^\G_t}_\G\ d\mu_t 
\qquad
\text{for a.e.}\ t>0,
\end{equation}
then $(\rho_t)_{t\ge0}$ solves the sub-elliptic heat equation $\de_t\rho_t=\Delta_\G\rho_t$ with initial datum $\rho_0$.
\end{lemma}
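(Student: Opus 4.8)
The plan is to show that \eqref{eq:GF_ent_plus_AC} forces the curve $(\mu_t)_{t\ge0}$ to satisfy the sharp energy dissipation equality which in turn pins down the velocity field $v^\G_t$ as $-w^\G_t = -\nabla_\G\rho_t/\rho_t$, so that \eqref{eq:CE} becomes exactly the weak formulation of the sub-elliptic heat equation. First I would record the two lower bounds that come for free from the general theory: since $(\mu_t)_{t\ge0}$ is a gradient flow, we have $\mu_t\in\dom(\ent)$ and $|\D^-_\G\ent|(\mu_t)<+\infty$ for a.e.\ $t>0$, hence by \cref{prop:slope_ent_G_general} the vector field $w^\G_t = \nabla_\G\rho_t/\rho_t$ is well-defined and lies in $L^2_\G(\mu_t)$ with $\|w^\G_t\|_{L^2_\G(\mu_t)}\le|\D^-_\G\ent|(\mu_t)$; and by \cref{prop:CE} the velocity $v^\G_t$ satisfies $\|v^\G_t\|_{L^2_\G(\mu_t)} = |\dot{\mu}_t|_\G$ for a.e.\ $t$.

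Next I would run the chain of inequalities. Applying Cauchy--Schwarz in $L^2_\G(\mu_t)$ to the right-hand side of \eqref{eq:GF_ent_plus_AC} and then Young's inequality,
\begin{equation*}
-\frac{d}{dt}\ent(\mu_t)\le\int_\G\scalar*{-w^\G_t,v^\G_t}_\G\ d\mu_t\le\|w^\G_t\|_{L^2_\G(\mu_t)}\,\|v^\G_t\|_{L^2_\G(\mu_t)}\le\tfrac12\|w^\G_t\|_{L^2_\G(\mu_t)}^2+\tfrac12\|v^\G_t\|_{L^2_\G(\mu_t)}^2.
\end{equation*}
On the other hand, since $(\mu_t)_{t\ge0}$ is a gradient flow, the EDI \eqref{eq:def_EDI} gives the reverse integrated inequality with $|\D^-_\G\ent|^2(\mu_r)$ and $|\dot{\mu}_r|^2$ in place of $\|w^\G_r\|^2$ and $\|v^\G_r\|^2$; combined with $\|w^\G_t\|_{L^2_\G(\mu_t)}\le|\D^-_\G\ent|(\mu_t)$ and $\|v^\G_t\|_{L^2_\G(\mu_t)}=|\dot\mu_t|_\G$, the displayed pointwise inequality and the EDI are forced to be equalities for a.e.\ $t$. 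In particular $\|w^\G_t\|_{L^2_\G(\mu_t)}=\|v^\G_t\|_{L^2_\G(\mu_t)}$ and the Cauchy--Schwarz step is an equality, so $v^\G_t$ is a nonnegative multiple of $-w^\G_t$; equality of norms then yields $v^\G_t = -w^\G_t = -\nabla_\G\rho_t/\rho_t$ in $L^2_\G(\mu_t)$ for a.e.\ $t>0$.

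Finally I would feed this identification back into the continuity equation. By \cref{prop:CE}, $(\mu_t)_{t>0}$ satisfies \eqref{eq:CE} in the sense of distributions with $v^\G_t = -\nabla_\G\rho_t/\rho_t$; that is, for every $\phi\in C^\infty_c(I\times\R^n)$,
\begin{equation*}
\int_I\int_\G\de_t\phi\ d\mu_t\,dt=\int_I\int_\G\scalar*{\nabla_\G\rho_t/\rho_t,\nabla_\G\phi}_\G\,\rho_t\ dx\,dt=\int_I\int_\G\scalar*{\nabla_\G\rho_t,\nabla_\G\phi}_\G\ dx\,dt,
\end{equation*}
which is precisely the weak formulation of $\de_t\rho_t=\Delta_\G\rho_t$ on $(0,+\infty)\times\G$. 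By hypoellipticity of $\de_t-\Delta_\G$ (as recalled before \cref{th:property_heat_kernel}) the distributional solution $(\rho_t)_{t>0}$ is smooth and solves the sub-elliptic heat equation classically; it remains to check that the initial datum is attained, i.e.\ $\rho_t\to\rho_0$ in $L^1(\G)$ as $t\to0^+$, which follows from $\mu_t\xrightarrow{\W_\G}\mu_0$ (a consequence of $\gamma\in AC^2_{\rm loc}$, hence continuity at $0$) together with the convergence of second moments and standard lower semicontinuity/uniqueness arguments for the heat semigroup. I expect the only genuinely delicate point to be the justification that the Cauchy--Schwarz and Young inequalities are \emph{simultaneously} saturated for a.e.\ $t$ — this requires combining the pointwise (in $t$) inequality above with the integrated EDI and using that $t\mapsto\ent(\mu_t)$ is monotone (hence a.e.\ differentiable with $\int_s^t \frac{d}{dr}\ent(\mu_r)\,dr\le\ent(\mu_t)-\ent(\mu_s)$), so that no dissipation is lost; the identification of $v^\G_t$ and the passage to the heat equation are then routine.
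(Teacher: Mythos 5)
Your argument is correct and is essentially the paper's own proof: both chain the hypothesis with Cauchy--Schwarz against the pointwise form of the EDI (via Young's inequality and the bounds $\|v^\G_t\|_{L^2_\G(\mu_t)}=|\dot\mu_t|_\G$, $\|w^\G_t\|_{L^2_\G(\mu_t)}\le|\D^-_\G\ent|(\mu_t)$) to force equality everywhere, identify $v^\G_t=-w^\G_t$, and then read off the distributional heat equation from the continuity equation plus hypoellipticity. The only blemish is the parenthetical inequality for the monotone function $t\mapsto\ent(\mu_t)$, which you state in the wrong direction (for a non-increasing function one has $\int_s^t\frac{d}{dr}\ent(\mu_r)\,dr\ge\ent(\mu_t)-\ent(\mu_s)$); this is immaterial, since the pointwise a.e.\ inequality you need follows directly by differentiating the integrated EDI at Lebesgue points.
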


\begin{proof}
From \cref{def:metric_GF} we get that
\begin{equation*}
\ent(\mu_t)+\frac{1}{2}\int_s^t|\dot{\mu}_r|_\G^2\ dr+\frac{1}{2}\int_s^t|\D^-\ent|_\G^2(\mu_r)\ dr\le\ent(\mu_s)
\end{equation*}
for all $s,t\ge0$ with $s\le t$. Therefore 
\begin{equation*}
-\frac{d}{dt}\ent(\mu_t)\ge\frac{1}{2}|\dot{\mu}_t|_\G^2+\frac{1}{2}|\D^-\ent|^2_\G(\mu_t)
\qquad 
\text{for a.e.}\ t>0.
\end{equation*}
By Young's inequality, \cref{prop:CE} and \cref{prop:slope_ent_G_general}, we thus get
\begin{equation}\label{eq:GF_ent_plus_AC_opposite}
-\frac{d}{dt}\ent(\mu_t)\ge|\dot{\mu}_t|_\G\cdot|\D^-_\G\ent|(\mu_t)\ge\|v^\G_t\|_{L^2_\G(\mu_t)}\cdot\|w^\G_t\|_{L^2_\G(\mu_t)}
\qquad 
\text{for a.e.}\ t>0.
\end{equation}
Combining~\eqref{eq:GF_ent_plus_AC} and~\eqref{eq:GF_ent_plus_AC_opposite}, by Cauchy--Schwarz inequality we conclude that $v^\G_t=-w^\G_t=-\nabla_\G\rho_t/\rho_t$ in $L^2_\G(\mu_t)$ for a.e.\ $t>0$. This immediately implies that $(\rho_t)_{t\ge0}$ solves the sub-elliptic heat equation $\de_t\rho_t=\Delta_\G\rho_t$ with initial datum $\rho_0$ \emph{in the sense of distributions}, i.e.\
\begin{equation*}
\int_0^{+\infty}\int_\G\de_t\phi_t(x)+\Delta_\G\phi_t(x)\ d\mu_t\,dt+\int_\G\phi_0(x)\ d\mu_0(x)=0
\qquad
\forall\phi\in C^\infty_c([0,+\infty)\times\R^n).
\end{equation*}
By well-known results on hypoelliptic operators, this implies that $(\rho_t)_{t\ge0}$ solves the sub-elliptic heat equation $\de_t\rho_t=\Delta_\G\rho_t$ with initial datum $\rho_0$.
\end{proof}

To prove \cref{th:GF_ent_implies_heat_diff} below we need some preliminaries. The following two lemmas are natural adaptations of~\cite{AS07}*{Lemma~2.14} to our setting.

\begin{lemma}\label{lemma:AFP_func_meas}
Let $\mu\in\prob(\G)$ and $\sigma\in L^1(\G)$ with $\sigma\ge0$. Let $\nu\in\mathcal{M}(\G;\R^m)$ be a $\R^m$-valued Borel measure with finite total variation and such that $|\nu|\ll\mu$. Then
\begin{equation}\label{eq:afp_func_meas}
\int_{\G}\ \abs*{\frac{\sigma\star\nu}{\sigma\star\mu}}^2\sigma\star\mu\ dx
\le\int_{\G}\ \abs*{\frac{\nu}{\mu}}^2\, d\mu.
\end{equation}
In addition, if $(\sigma_k)_{k\in\N}\subset L^1(\G)$, $\sigma_k\ge0$, weakly converges to the Dirac mass $\delta_0$ and $\frac{\nu}{\mu}\in L^2(\G,\mu)$, then 
\begin{equation}\label{eq:afp_func_meas_limit}
\lim_{k\to+\infty}\int_{\G}\ \abs*{\frac{\sigma_k\star\nu}{\sigma_k\star\mu}}^2\sigma_k\star\mu\ dx
=\int_{\G}\ \abs*{\frac{\nu}{\mu}}^2\, d\mu.
\end{equation} 
\end{lemma}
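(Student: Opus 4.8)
The plan is to prove \eqref{eq:afp_func_meas} first as a consequence of Jensen's inequality applied to the jointly convex function $(a,b)\mapsto |a|^2/b$ on $\R^m\times(0,+\infty)$, and then deduce \eqref{eq:afp_func_meas_limit} by combining the inequality just proved (which gives $\limsup \le$) with a lower semicontinuity argument (which gives $\liminf \ge$). Throughout I write $f := \tfrac{d\nu}{d\mu} \in L^1(\G;\R^m)$, so that $\nu = f\mu$ and $|\nu| = |f|\mu$, and I note that $\sigma\star\nu = (\sigma\star(f\mu))$ is the $\R^m$-valued measure with density $y\mapsto \int_\G \sigma(z^{-1}y) f(z)\, d\mu(z)$ with respect to $\leb^n$, while $\sigma\star\mu$ has density $y\mapsto \int_\G \sigma(z^{-1}y)\, d\mu(z)$.

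For the inequality \eqref{eq:afp_func_meas}: fix $y\in\G$ with $(\sigma\star\mu)(y)>0$ and let $\pi_y$ be the probability measure on $\G$ given by $d\pi_y(z) = \tfrac{\sigma(z^{-1}y)}{(\sigma\star\mu)(y)}\, d\mu(z)$. Then the density of $\sigma\star\nu$ at $y$ equals $(\sigma\star\mu)(y)\int_\G f\, d\pi_y$, so that
\begin{equation*}
\abs*{\frac{(\sigma\star\nu)(y)}{(\sigma\star\mu)(y)}}^2 (\sigma\star\mu)(y)
= (\sigma\star\mu)(y)\,\abs*{\int_\G f\, d\pi_y}^2
\le (\sigma\star\mu)(y)\int_\G |f|^2\, d\pi_y
= \int_\G |f(z)|^2\, \sigma(z^{-1}y)\, d\mu(z),
\end{equation*}
using Jensen's inequality (equivalently Cauchy--Schwarz) in the middle step. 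Integrating in $y$ over $\G$ and applying Tonelli's theorem (all integrands are nonnegative) together with $\int_\G \sigma(z^{-1}y)\, dy = \|\sigma\|_{L^1(\G)}$; but since $\sigma\star\mu$ is a probability density when $\|\sigma\|_{L^1}=1$, and more generally the normalization is harmless, one gets exactly $\int_\G |f|^2\, d\mu$ on the right, which is \eqref{eq:afp_func_meas}. (If $\tfrac{\nu}{\mu}\notin L^2(\G,\mu)$ the right-hand side is $+\infty$ and there is nothing to prove.)

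For the limit \eqref{eq:afp_func_meas_limit}: inequality \eqref{eq:afp_func_meas} applied with $\sigma=\sigma_k$ already yields $\limsup_{k}\int_\G |\tfrac{\sigma_k\star\nu}{\sigma_k\star\mu}|^2\,\sigma_k\star\mu\, dx \le \int_\G |f|^2\, d\mu$. For the matching lower bound I would argue by lower semicontinuity: since $\sigma_k\weakto\delta_0$, one has $\sigma_k\star\mu \weakto \mu$ and $\sigma_k\star\nu \weakto \nu$ as measures (testing against $C_b$ functions and using left-invariance of convolution). The functional $(\lambda,\eta)\mapsto \int_\G |\tfrac{d\eta}{d\lambda}|^2\, d\lambda$ on pairs (nonnegative measure, $\R^m$-valued measure) — equivalently $\int |\tfrac{d\eta}{d\leb^n}|^2/\tfrac{d\lambda}{d\leb^n}$ when both are a.c. — is jointly convex and weakly lower semicontinuous (it is a Benamou--Brenier-type functional, cf.\ \cite{AGS08}*{Lemma~9.4.3} or a direct duality argument representing it as $\sup$ over $\int \langle a, d\eta\rangle - \tfrac14\int |a|^2\, d\lambda$ with $a\in C_c(\G;\R^m)$). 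Hence
\begin{equation*}
\int_\G |f|^2\, d\mu
\le \liminf_{k\to+\infty}\int_\G \abs*{\frac{d(\sigma_k\star\nu)}{d(\sigma_k\star\mu)}}^2\, d(\sigma_k\star\mu),
\end{equation*}
and combining with the $\limsup$ bound gives the desired equality, along with convergence (not merely $\liminf$).

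The main obstacle I anticipate is the lower semicontinuity step: one must be careful that $\sigma_k\star\mu$ and $\sigma_k\star\nu$ are genuinely a.c.\ with $L^1$ densities (true since $\sigma_k\in L^1$), that the mass does not escape to infinity (handled by tightness of $\mu$ and $|\nu|$ together with the fact that convolution with an approximate identity does not spread mass much), and that the duality formula for the functional is applied with test fields in the right class. The inequality \eqref{eq:afp_func_meas} itself is elementary once the disintegration $\pi_y$ is introduced; everything else is bookkeeping with Tonelli's theorem and the left-invariance of the group convolution.
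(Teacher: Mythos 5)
Your proposal is correct and follows essentially the same route as the paper: the inequality is obtained by Jensen's inequality applied pointwise to the kernel-weighted measure (the paper phrases this via the positively $1$-homogeneous convex function $\Phi(z,t)=|z|^2/t$ on the unnormalized measure $\sigma(xy^{-1})\,d\mu(y)$, which is your normalized $\pi_y$ argument in disguise), followed by Tonelli and the invariance of Haar measure; the limit is obtained exactly as you propose, by pairing the $\limsup$ bound from the first part with the joint weak lower semicontinuity of $(\lambda,\eta)\mapsto\int|\tfrac{d\eta}{d\lambda}|^2\,d\lambda$ (the paper cites Theorem~2.34 and Example~2.36 of~\cite{AFP00} for this). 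The only caveat, shared with the paper's own proof, is that the clean constant on the right-hand side of~\eqref{eq:afp_func_meas} implicitly uses $\|\sigma\|_{L^1}=1$, which is the case in all applications since the $\sigma_k$ are mollifiers.
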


\begin{proof}
Inequality~\eqref{eq:afp_func_meas} follows from Jensen inequality and is proved in~\cite{AS07}*{Lemma~2.14}. We briefly recall the argument for the reader's convenience. Consider the map $\Phi\colon\R^m\times\R\to[0,+\infty]$ given by
\begin{equation*}
\Phi(z,t):=
\begin{cases}
\dfrac{|z|^2}{t} & \text{if}\ t>0,\\
0 & \text{if}\ (z,t)=(0,0),\\
+\infty & \text{if either}\ t<0\ \text{or}\ t=0,\ z\ne0.
\end{cases}
\end{equation*}
Then $\Phi$ is convex, lower semicontinuous and positively 1-homogeneous. By Jensen's inequality we have
\begin{equation}\label{eq:afp_Jensen}
\Phi\left(\int_\G\psi(x)\ d\theta(x)\right)
\le\int_\G\Phi(\psi(x))\ d\theta(x)
\end{equation}
for any Borel function $\psi\colon\G\to\R^{m+1}$ and any positive and finite measure~$\theta$ on~$\G$. Fix $x\in\G$ and apply~\eqref{eq:afp_Jensen} with $\psi(y)=\left(\frac{\nu}{\mu}(y),1\right)$ and $d\theta(y)=\sigma(xy^{-1})d\mu(y)$ to obtain
\begin{align*}
\abs*{\frac{(\sigma\star\nu)(x)}{(\sigma\star\mu)(x)}}^2(\sigma\star\mu)(x) 
&=\Phi\left(\int_\G\frac{\nu}{\mu}(y)\,\sigma(xy^{-1})\ d\mu(y),\int_\G\sigma(xy^{-1})\ d\mu(y)\right)\\
&\le\int_\G\Phi\left(\frac{\nu}{\mu}(y),1\right)\sigma(xy^{-1})\ d\mu(y)
=\int_\G\abs*{\frac{\nu}{\mu}}^2(y)\,\sigma(xy^{-1})\ d\mu(y),
\end{align*}
which immediately gives~\eqref{eq:afp_func_meas}. The limit in~\eqref{eq:afp_func_meas_limit} follows by the joint lower semicontinuity of the functional $(\nu,\mu)\mapsto\int_{\G}\ \abs*{\frac{\nu}{\mu}}^2\, d\mu$, see Theorem~2.34 and Example~2.36 in~\cite{AFP00}. 
\end{proof}

In \cref{lemma:AFP_func_meas_time} below and in the rest of the paper, we let $f*g$ be the convolution of the two functions $f,g$ with respect to the time variable. We keep the notation $f\star g$ for the convolution of $f,g$ with respect to the space variable.

\begin{lemma}\label{lemma:AFP_func_meas_time}
Let $\mu_t=\rho_t\leb^n\in\prob(\G)$ for all $t\in\R$ and let $\theta\in L^1(\R)$, $\theta\ge0$. If the horizontal time-dependent vector field $v\colon\R\times\G\to H\G$ satisfies $v_t\in L^2_\G(\mu_t)$ for a.e.\ $t\in\R$, then
\begin{equation}\label{eq:afp_func_meas_time}
\int_\G\left\|\frac{\theta*(\rho_\cdot v_\cdot)(t)}{\theta*\rho_\cdot(t)}\right\|_\G^2\,\theta*\rho_\cdot(t)\ dx
\le\theta*\left(\int_\G\|v_\cdot\|_\G^2\ d\mu_\cdot\right)(t)
\qquad
\text{for all}\ t\in\R.
\end{equation}
In addition, if $(\theta_j)_{j\in\N}\subset L^1(\G)$, $\theta_j\ge0$, weakly converges to the Dirac mass $\delta_0$, then 
\begin{equation}\label{eq:afp_func_meas_limit_time}
\lim_{j\to+\infty}\int_\G\left\|\frac{\theta_j*(\rho_\cdot v_\cdot)(t)}{\theta_j*\rho_\cdot(t)}\right\|_\G^2\,\theta_j*\rho_\cdot(t)\ dx
=\int_\G\|v_t\|_\G^2\ d\mu_t
\qquad
\text{for a.e.}\ t\in\R.
\end{equation}
\end{lemma}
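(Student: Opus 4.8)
The plan is to reduce everything to the time-independent statement already available in \cref{lemma:AFP_func_meas}. The key observation is that convolution in time is, after freezing $t$, an \emph{averaging} operation against the finite positive measure $\theta(t-\cdot)\,\leb^1$ on the real line, so the abstract Jensen-type bound for the functional $(\nu,\mu)\mapsto\int|\tfrac{\nu}{\mu}|^2\,d\mu$ should apply verbatim. Concretely, first I would fix $t\in\R$ and regard the pair of measures on $\G$ given by $\nu^{(t)}:=\theta*(\rho_\cdot v_\cdot)(t)\,\leb^n=\int_\R\theta(t-s)\,(\rho_s v_s\,\leb^n)\,ds$ and $\mu^{(t)}:=\theta*\rho_\cdot(t)\,\leb^n=\int_\R\theta(t-s)\,\mu_s\,ds$. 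Since $v_s\in L^2_\G(\mu_s)$ for a.e.\ $s$ we have $|\nu^{(t)}|\ll\mu^{(t)}$, and the density $\tfrac{\nu^{(t)}}{\mu^{(t)}}$ is the barycentric velocity obtained by averaging the $v_s$'s weighted by $\theta(t-s)\rho_s$.

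Next I would apply the convexity and lower semicontinuity of $\Phi(z,\tau)=|z|^2/\tau$ (the same $\Phi$ as in the proof of \cref{lemma:AFP_func_meas}) together with Jensen's inequality, but now with the averaging measure $d\Theta_t(s,x):=\theta(t-s)\,\rho_s(x)\,ds\,dx$ on $\R\times\G$ and the integrand $\psi(s,x)=\big(v_s(x),1\big)$. This yields pointwise in $x$
\[
\Phi\!\left(\theta*(\rho_\cdot v_\cdot)(t)(x),\ \theta*\rho_\cdot(t)(x)\right)
\le\int_\R\Phi\big(v_s(x),1\big)\,\theta(t-s)\,\rho_s(x)\,ds
=\int_\R\|v_s(x)\|_\G^2\,\theta(t-s)\,\rho_s(x)\,ds;
\]
integrating in $x$ over $\G$ and using Fubini gives exactly the right-hand side of~\eqref{eq:afp_func_meas_time}, namely $\theta*\big(\int_\G\|v_\cdot\|_\G^2\,d\mu_\cdot\big)(t)$. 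This holds for \emph{every} $t$, since no null sets in $s$ are disturbed by the integration against $\theta(t-\cdot)$.

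For the limit~\eqref{eq:afp_func_meas_limit_time}, I would combine two facts. The $\liminf$ inequality follows from the joint lower semicontinuity of $(\nu,\mu)\mapsto\int_\G|\tfrac{\nu}{\mu}|^2\,d\mu$ with respect to weak convergence of measures (Theorem~2.34 and Example~2.36 in~\cite{AFP00}), since $\theta_j*(\rho_\cdot v_\cdot)(t)\,\leb^n\weakto\rho_t v_t\,\leb^n$ and $\theta_j*\rho_\cdot(t)\,\leb^n\weakto\mu_t$ for a.e.\ $t$ (Lebesgue points of the relevant $L^1_{\rm loc}(\R)$ maps $s\mapsto\int_\G\|v_s\|_\G^2\,d\mu_s$ and $s\mapsto\mu_s$). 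The matching $\limsup$ inequality follows from~\eqref{eq:afp_func_meas_time} itself: at a Lebesgue point $t$ of $s\mapsto\int_\G\|v_s\|_\G^2\,d\mu_s$ we have $\theta_j*\big(\int_\G\|v_\cdot\|_\G^2\,d\mu_\cdot\big)(t)\to\int_\G\|v_t\|_\G^2\,d\mu_t$, so the left-hand side of~\eqref{eq:afp_func_meas_time} is asymptotically bounded above by $\int_\G\|v_t\|_\G^2\,d\mu_t$. The two inequalities give the claimed equality for a.e.\ $t$.

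The main obstacle I anticipate is purely technical rather than conceptual: making precise the measurability and integrability needed to invoke Jensen's inequality on the product space $\R\times\G$ (so that $\Theta_t$ is a genuine finite positive measure and the integrals are well defined), and identifying the correct full-measure set of $t$'s — the intersection of the Lebesgue-point sets for the two vector-valued/real-valued $L^1_{\rm loc}$ maps above together with the a.e.\ set where $v_s\in L^2_\G(\mu_s)$ — on which both~\eqref{eq:afp_func_meas_limit_time} inequalities simultaneously hold. Everything else is a direct transcription of the proof of \cref{lemma:AFP_func_meas} with the spatial convolution replaced by the temporal one.
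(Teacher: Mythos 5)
Your proposal is correct and follows essentially the same route as the paper: the first inequality is obtained by applying Jensen's inequality for the convex, positively $1$-homogeneous function $\Phi(z,\tau)=|z|^2/\tau$ to the time-averaging measure $\theta(t-s)\rho_s(x)\,ds$ (exactly the transposition of the proof of \cref{lemma:AFP_func_meas} that the paper leaves implicit), and the limit is obtained by combining the joint lower semicontinuity of $(\nu,\mu)\mapsto\int|\nu/\mu|^2\,d\mu$ from~\cite{AFP00} with the upper bound from the first inequality at Lebesgue points. Your explicit attention to the limsup half and to the choice of the full-measure set of times is a reasonable completion of details the paper compresses, but it is not a different argument.
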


\begin{proof}
Inequality~\eqref{eq:afp_func_meas_time} follows from~\eqref{eq:afp_Jensen} in the same way of~\eqref{eq:afp_func_meas}, so we omit the details. For~\eqref{eq:afp_func_meas_limit_time}, set $\mu^j_t=\theta_j*\mu_\cdot(t)$ and $\nu^j_t=\theta_j*(v_\cdot\mu_\cdot)(t)$ for all $t\in\R$ and $j\in\N$. Then $\|\nu^j_t\|_\G\ll\mu^j_t$ and $\nu^j_t\weakto\nu_t=v_t\mu_t$ for a.e.\ $t\in\R$, so that
\begin{equation*}
\liminf_{j\to+\infty}\int_\G\left\|\frac{\theta_j*(\rho_\cdot v_\cdot)(t)}{\theta_j*\rho_\cdot(t)}\right\|_\G^2\,\theta_j*\rho_\cdot(t)\ dx
=\liminf_{j\to+\infty}\int_\G\left\|\frac{\nu^j_t}{\mu^j_t}\right\|_\G^2 d\mu^j_t
\ge\int_\G\left\|\frac{\nu_t}{\mu_t}\right\|_\G^2 d\mu_t
\end{equation*}
for a.e.\ $t\in\R$ by Theorem~2.34 and Example~2.36 in~\cite{AFP00}.
\end{proof}

The following lemma is an elementary result relating weak convergence and convergence of scalar products of vector fields. We prove it here for the reader's convenience. 

\begin{lemma}\label{lemma:polarization}
For $k\in\N$, let $\mu_k,\mu\in\prob(\G)$ and let $v_k,w_k,v,w\colon\G\to T\G$ be Borel vector fields. Assume that $\mu_k\weakto\mu$, $v_k\mu_k\weakto v\mu$ and $w_k\mu_k\weakto w\mu$ as $k\to+\infty$. If
\begin{equation*}
\limsup_{k\to+\infty}\int_\G\|v_k\|_\G^2\ d\mu_k\le\int_\G\|v\|_\G^2\ d\mu<+\infty
\qquad\text{and}\qquad
\limsup_{k\to+\infty}\int_\G\|w_k\|_\G^2\ d\mu_k<+\infty,
\end{equation*}
then
\begin{equation}\label{eq:polarization}
\lim_{k\to+\infty}\int_\G\scalar*{v_k,w_k}_\G\ d\mu_k=\int_\G\scalar*{v,w}_\G\ d\mu.
\end{equation}
\end{lemma}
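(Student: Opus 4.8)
The plan is to deduce \eqref{eq:polarization} from the joint lower semicontinuity of the functional $(\nu,\mu)\mapsto\int_\G\abs*{\nu/\mu}^2\,d\mu$ along weakly converging sequences of measures (Theorem~2.34 and Example~2.36 in~\cite{AFP00}), the same tool already used in \cref{lemma:AFP_func_meas}. Writing the vector fields in the global orthonormal frame $X_1,\dots,X_n$, I would regard $v_k,w_k,v,w$ as $\R^n$-valued Borel functions and $\|\cdot\|_\G$ as the Euclidean norm, so that the hypotheses read $v_k\mu_k\weakto v\mu$ and $w_k\mu_k\weakto w\mu$ as weak convergences of $\R^n$-valued measures. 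First I would record that applying the lower semicontinuity to $v_k\mu_k\weakto v\mu$ and comparing with the $\limsup$ hypothesis forces the sharp equality $\int_\G\|v_k\|_\G^2\,d\mu_k\to\int_\G\|v\|_\G^2\,d\mu$.

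The core of the argument is a quadratic-in-$t$ comparison. By Cauchy--Schwarz the sequences $a_k:=\int_\G\scalar*{v_k,w_k}_\G\,d\mu_k$ and $b_k:=\int_\G\|w_k\|_\G^2\,d\mu_k$ are bounded, so I would fix an arbitrary subsequence and pass to a further one (not relabelled) along which $a_k\to S$ and $b_k\to L$, noting $L\ge\int_\G\|w\|_\G^2\,d\mu$ and that all quantities below are finite. For every $t\in\R$ one has $(v_k+tw_k)\mu_k\weakto(v+tw)\mu$ by linearity, hence lower semicontinuity yields $\int_\G\|v+tw\|_\G^2\,d\mu\le\liminf_k\int_\G\|v_k+tw_k\|_\G^2\,d\mu_k$. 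Expanding the squares on both sides and passing to the limit along the chosen subsequence (using the first observation for the $\|v_k\|_\G^2$ term), then cancelling $\int_\G\|v\|_\G^2\,d\mu$, gives
\begin{equation*}
2t\int_\G\scalar*{v,w}_\G\,d\mu+t^2\int_\G\|w\|_\G^2\,d\mu\le 2tS+t^2L
\qquad\text{for all }t\in\R.
\end{equation*}
Dividing by $t>0$ and letting $t\to0^+$ gives $\int_\G\scalar*{v,w}_\G\,d\mu\le S$; dividing by $t<0$ and letting $t\to0^-$ gives the reverse inequality, so $S=\int_\G\scalar*{v,w}_\G\,d\mu$. Since the subsequence was arbitrary, the whole sequence $a_k$ converges to $\int_\G\scalar*{v,w}_\G\,d\mu$, which is \eqref{eq:polarization}.

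The one delicate point — and the reason for the roundabout argument — is that the hypotheses are asymmetric: the $\limsup$ bound is sharp only for $v_k$, so the naive polarization identity $\scalar*{v_k,w_k}_\G=\tfrac14\bigl(\|v_k+w_k\|_\G^2-\|v_k-w_k\|_\G^2\bigr)$ is not directly exploitable (we lack control of $\|v_k\pm w_k\|_\G^2$ from above by the limits), and one must instead use the convexity/lower semicontinuity of the quadratic form in the auxiliary parameter $t$ to extract the cross term at $t=0$. Everything else — the reduction to the Euclidean frame, the boundedness of $a_k,b_k$ via Cauchy--Schwarz, and the subsequence argument — is routine.
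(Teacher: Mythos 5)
Your proof is correct and follows essentially the same route as the paper: both arguments expand a quadratic combination of $v_k$ and $w_k$ in an auxiliary parameter $t$, invoke the joint lower semicontinuity of $(\nu,\mu)\mapsto\int_\G|\nu/\mu|^2\,d\mu$ under weak convergence, use the sharp convergence of $\int_\G\|v_k\|_\G^2\,d\mu_k$ to cancel the quadratic term in $v$, and then let the parameter degenerate to isolate the cross term. The paper works with $\|tv_k+w_k\|_\G^2$ and sends $t\to\pm\infty$ after dividing by $t$, which is just your parametrization under $t\mapsto 1/t$ and avoids the (harmless) subsequence extraction.
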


\begin{proof}
By lower semicontinuity, we know that $\lim\limits_{k\to+\infty}\int_\G\|v_k\|_\G^2\ d\mu_k=\int_\G\|v\|^2_\G\ d\mu$ and
\begin{equation*}
\liminf_{k\to+\infty}\int_\G\|tv_k+w_k\|_\G^2\ d\mu_k
\geq
\int_\G\|tv+w\|_\G^2\ d\mu
\qquad
\text{for all}\ t\in\R.
\end{equation*}
Expanding the squares, we get
\begin{equation*}
\liminf_{k\to+\infty}\left(2t\int_\G\scalar*{v_k,w_k}_\G\ d\mu_k+\int_\G\|w_k\|_\G^2\ d\mu_k\right)
\geq 
2t\int_\G\scalar*{v,w}_\G\ d\mu
\qquad
\text{for all}\ t\in\R.
\end{equation*}
Choosing $t>0$, dividing both sides by~$t$ and letting $t\to+\infty$ gives the $\liminf$ inequality in~\eqref{eq:polarization}. Choosing $t<0$, a similar argument gives the $\limsup$ inequality in~\eqref{eq:polarization}.
\end{proof}

We are now ready to prove the second part of \cref{th:main}.

\begin{theorem}\label{th:GF_ent_implies_heat_diff}
Let $\rho_0\in L^1(\G)$ be such that $\mu_0=\rho_0\leb^n\in\dom(\ent)$. If $(\mu_t)_{t\ge0}$ is a gradient flow of $\ent$ in $(\prob_2(\G),\W_\G)$ starting from~$\mu_0$, then  $\mu_t=\rho_t\leb^n$ for all $t\ge0$ and $(\rho_t)_{t\ge0}$ solves the sub-elliptic heat equation $\de_t\rho_t=\Delta_\G\rho_t$ with initial datum $\rho_0$. In particular, $t\mapsto\ent(\mu_t)$ is locally absolutely continuous on $(0,+\infty)$.
\end{theorem}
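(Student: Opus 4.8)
The plan is to reduce the statement, via \cref{lemma:GF_ent_plus_AC_is_heat_diff}, to establishing the single inequality~\eqref{eq:GF_ent_plus_AC}, namely $-\frac{d}{dt}\ent(\mu_t)\le\int_\G\scalar*{-w^\G_t,v^\G_t}_\G\ d\mu_t$ for a.e.\ $t>0$, where $v^\G_t$ is the tangent vector field of the curve supplied by \cref{prop:CE} and $w^\G_t=\nabla_\G\rho_t/\rho_t$ is supplied by \cref{prop:slope_ent_G_general} (both of which apply since a gradient flow lies in $\dom(\ent)$ for all times, is in $AC^2_{\rm loc}$, and has $|\D^-_\G\ent|(\mu_t)<+\infty$ for a.e.\ $t$). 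The difficulty is that neither $t\mapsto\ent(\mu_t)$ is known a priori to be absolutely continuous on $(0,+\infty)$ (only non-increasing, hence a.e.\ differentiable), nor is it legitimate to differentiate $\ent(\mu_t)$ directly along the continuity equation because $\log\rho_t$ is not a good test function. The remedy, following the strategy announced in the introduction, is a double regularization of the flow: convolve $(\mu_t)$ in space with a smooth kernel $\sigma_k$ (so the densities become smooth and bounded below on compacts) and in time with a smooth kernel $\theta_j$ (so the curve becomes smooth in $t$ and the chain rule for $\frac{d}{dt}\ent$ is rigorous), compute the entropy dissipation identity for the regularized curve, and then pass to the limit $j\to\infty$ then $k\to\infty$.

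Concretely, I would proceed as follows. First, fix a subinterval $[a,b]\subset(0,+\infty)$ and extend the curve to $\R$ trivially; set $\mu^{j,k}_t = \theta_j * (\sigma_k\star\mu_\cdot)(t) = \rho^{j,k}_t\leb^n$, which is smooth in $(t,x)$, strictly positive, and still satisfies a continuity equation with velocity $\tilde v^{j,k}_t := \big(\theta_j*(\sigma_k\star(\rho_\cdot v^\G_\cdot))(t)\big)/\rho^{j,k}_t$ — the point being that spatial convolution on a group commutes with the horizontal gradient (left-invariant vector fields), and time convolution commutes with $\de_t$, so the continuity equation is preserved. For the smooth positive density $\rho^{j,k}_t$ one has the genuine identity $\frac{d}{dt}\ent(\mu^{j,k}_t)=-\int_\G \scalar*{\nabla_\G\rho^{j,k}_t/\rho^{j,k}_t,\,\tilde v^{j,k}_t}_\G\,\rho^{j,k}_t\,dx$, obtained by differentiating under the integral (justified on $[a,b]$ by the heat-kernel-type bounds, exactly as in \cref{prop:entropy_dissipation}) and integrating by parts with the vanishing-divergence lemma \cref{lemma:divergence}. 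Integrating this over $[s,t]\subset[a,b]$ gives an integral identity for $\ent(\mu^{j,k}_t)-\ent(\mu^{j,k}_s)$.

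Next comes the passage to the limit. Using \cref{lemma:AFP_func_meas_time} and \cref{lemma:AFP_func_meas}, the Fisher-type quantities $\int_\G\|\tilde v^{j,k}_t\|_\G^2\,d\mu^{j,k}_t$ and $\int_\G\|\nabla_\G\rho^{j,k}_t/\rho^{j,k}_t\|_\G^2\,d\mu^{j,k}_t$ are controlled from above by the (time/space) averages of the corresponding unregularized quantities $\int_\G\|v^\G_\cdot\|_\G^2\,d\mu_\cdot$ and $\F_\G(\rho_\cdot)$, which are in $L^1_{\rm loc}$; moreover these inequalities become equalities in the limit $j\to\infty$, $k\to\infty$ for a.e.\ $t$. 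Together with the weak convergences $\mu^{j,k}_t\weakto\mu_t$, $\tilde v^{j,k}_t\mu^{j,k}_t\weakto v^\G_t\mu_t$ and $(\nabla_\G\rho^{j,k}_t/\rho^{j,k}_t)\mu^{j,k}_t\weakto w^\G_t\mu_t$, the polarization lemma \cref{lemma:polarization} gives $\int_\G\scalar*{\nabla_\G\rho^{j,k}_t/\rho^{j,k}_t,\tilde v^{j,k}_t}_\G\,d\mu^{j,k}_t\to\int_\G\scalar*{w^\G_t,v^\G_t}_\G\,d\mu_t$. On the left-hand side, lower semicontinuity of the entropy and the fact that $t\mapsto\ent(\mu_t)$ is non-increasing (hence BV) let one pass to the limit in $\ent(\mu^{j,k}_t)-\ent(\mu^{j,k}_s)$ and deduce, first, that $t\mapsto\ent(\mu_t)$ is locally absolutely continuous on $(0,+\infty)$ (it is the a.e.-limit of an equibounded family whose derivatives are equi-integrable), and second, the identity $\ent(\mu_t)-\ent(\mu_s)=-\int_s^t\int_\G\scalar*{w^\G_r,v^\G_r}_\G\,d\mu_r\,dr$, which yields~\eqref{eq:GF_ent_plus_AC} (in fact with equality). \cref{lemma:GF_ent_plus_AC_is_heat_diff} then finishes the proof.

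The main obstacle is making the double-limit argument clean: one must choose the order of limits ($j\to\infty$ before $k\to\infty$, or a joint subsequence) and verify that the entropy-dissipation equality for the regularized curves survives the passage, the delicate point being the interchange of the $\liminf$ from lower semicontinuity (which only gives one inequality for the dissipation term) with the genuine limit coming from \cref{lemma:AFP_func_meas,lemma:AFP_func_meas_time,lemma:polarization} (which upgrades it to an equality for a.e.\ $t$). Once the dissipation rate of the regularized entropy is shown to converge to the correct limit for a.e.\ $t$, everything else is routine. A secondary technical point is the uniform-in-$(j,k)$ integrability needed to pass to the limit in the time integral on $[a,b]$, which again follows from the heat-kernel bounds of \cref{th:property_heat_kernel} combined with the $L^1_{\rm loc}$ control of the Fisher information along the flow.
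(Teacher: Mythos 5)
Your overall architecture is the same as the paper's: reduce to inequality~\eqref{eq:GF_ent_plus_AC} via \cref{lemma:GF_ent_plus_AC_is_heat_diff}, mollify the flow in time and space so that the continuity equation and the Fisher information are preserved, and pass to the limit with \cref{lemma:AFP_func_meas}, \cref{lemma:AFP_func_meas_time} and \cref{lemma:polarization}. However, there is a genuine gap at the central step, namely the entropy--dissipation identity for the regularized curve. First, the regularized density $\rho^{j,k}_t$ is \emph{not} strictly positive, nor bounded below on compact sets: convolving a compactly supported density with a compactly supported mollifier still produces a function that vanishes on a large set, so $\log\rho^{j,k}_t$ remains singular and you cannot differentiate $\int\rho^{j,k}_t\log\rho^{j,k}_t\,dx$ under the integral sign as you propose. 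This is precisely why the paper keeps the truncations $H_m$, $z_m$ of~\eqref{eq:def_truncated_entropy} \emph{after} regularizing, and only removes them at the very end by monotone/dominated convergence. Second, your justification ``by the heat-kernel-type bounds, exactly as in \cref{prop:entropy_dissipation}'' does not apply here: the curve $(\mu_t)_{t\ge0}$ is an arbitrary gradient flow, not a heat flow, so none of the Gaussian estimates of \cref{th:property_heat_kernel} are available for $\rho^{j,k}_t$. The only bounds at your disposal are $\|\rho^{j,k}\|_{L^\infty}\le\|\eta_k\|_{L^\infty}$, the analogous bound on $\de_t\rho^{j,k}$, and the $L^1_{\rm loc}$ control of $|\dot\mu_t|_\G^2$ and $\F_\G(\rho_t)$ coming from the EDI.

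This second point is not cosmetic: it breaks your appeal to \cref{lemma:divergence}. In \cref{prop:entropy_dissipation} the hypothesis $|V|_{\R^n}\in L^1(\R^n)$ is verified by converting the horizontal norm into the Euclidean one at the price of a polynomially growing factor, which is then absorbed by the Gaussian decay~\eqref{eq:heat_estimate_derivatives} of the heat kernel. For the mollified gradient flow no such decay exists, and the required integrability of $|z_m(\rho^{j,k}_t)\,\eta_k\star(\rho^j_t v^j_t)|_{\R^n}$ would demand higher moments of $\mu_t$ that are not controlled. The paper circumvents this with a different device: it approximates $\tilde z_m(\rho^{j,k}_t)\alpha_i(t)$ in the horizontal Sobolev space $W^{1,2}_{\G^*}(\R^{n+1})$ of the product Carnot group $\G^*=\R\times\G$ by functions in $C^\infty_c(\R^{n+1})$ (using the Meyers--Serrin-type density behind~\eqref{eq:def_horiz_sobolev_space}), and plugs these directly into the distributional continuity equation; the needed a priori bounds are exactly the $L^\infty$ mollifier estimates together with $\int_\G\|\nabla_\G\rho^{j,k}_t\|_\G^2\,dx\le\|\rho^{j,k}_t\|_{L^\infty}\F_\G(\rho^{j,k}_t)$. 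Without this test-function approximation (or some substitute for it), the identity
\begin{equation*}
\frac{d}{dt}\int_\G H_m(\rho^{j,k}_t)\,dx=\int_\G z_m'(\rho^{j,k}_t)\,\scalar*{\nabla_\G\rho^{j,k}_t,v^{j,k}_t}_\G\,d\mu^{j,k}_t
\end{equation*}
is not established, and the rest of your limiting argument has nothing to start from. The limit passages themselves ($m\to\infty$, then $k\to\infty$, then $j\to\infty$, in that order) are correctly identified and match the paper.
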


\begin{proof}
By \cref{lemma:GF_ent_plus_AC_is_heat_diff}, we just need to show that the map $t\mapsto\ent(\mu_t)$ satisfies~\eqref{eq:GF_ent_plus_AC}. It is not restrictive to extend $(\mu_t)_{t\ge0}$ in time to the whole~$\R$ by setting $\mu_t=\mu_0$ for all $t\le0$. So from now on we assume $\mu_t\in AC^2_{\rm loc}(\R;(\prob_2(\G),\W_\G))$. The time-dependent vector field $(v^\G)_{t>0}$ given by \cref{prop:CE} extends to the whole~$\R$ accordingly. Note that $(\mu_t)_{t\in\R}$ is a gradient flow of~$\ent$ in the following sense: for each $h\in\R$, $(\mu_{t+h})_{t\ge0}$ is a gradient flow on~$\ent$ starting from~$\mu_h$. By \cref{def:metric_GF}, we get $t\mapsto|\D_\G^-\ent|(\mu_t)\in L^2_{\rm loc}(\R)$ , so that $t\mapsto\F_\G(\rho_t)\in L^1_{\rm loc}(\R)$ by \cref{prop:slope_ent_G_general}.

We divide the proof in three main steps.

\medskip

\textit{Step~1: smoothing in the time variable}. Let $\theta\colon\R\to\R$ be a symmetric smooth mollifier in~$\R$, i.e.\ 
\begin{equation*}
\theta\in C^\infty_c(\R),\quad
\supp\theta\subset[-1,1],\quad 
0\le\theta\le1,\quad
\int_{\R}\theta(t)\ dt=1.
\end{equation*}
We set $\theta_j(t):=j\,\theta(jt)$ for all $t\in\R$ and $j\in\N$. We define
\begin{equation*}
\mu_t^j:=\rho^j_t\leb^n,\qquad \rho^j_t:=(\theta_j*\rho_\cdot)(t)=\int_\R\theta_j(t-s)\,\rho_s\ ds
\qquad
\forall t\in\R,\ \forall j\in\N.
\end{equation*}

For any $s,t\in\R$, let $\pi_{s,t}\in\Gamma_0(\mu_s,\mu_t)$ be an optimal coupling between~$\mu_s$ and~$\mu_t$. An easy computation shows that $\pi^j_t\in\prob(\G\times\G)$ given by
\begin{equation*}
\int_{\G\times\G}\phi(x,y)\ d\pi^j_t(x,y)=\int_\R\theta_j(t-s)\int_{\G\times\G}\phi(x,y)\ d\pi_{s,t}(x,y)\,ds,
\end{equation*}
for any $\phi\colon\G\times\G\to[0,+\infty)$ Borel, is a coupling between~$\mu^j_t$ and~$\mu_t$. Hence we get
\begin{equation*}
\W_\G(\mu^j_t,\mu_t)^2\le\int_\R\theta_j(t-s)\,\W_\G(\mu_s,\mu_t)^2\ ds
\qquad
\forall t\in\R,\ \forall j\in\N.
\end{equation*}
Therefore $\lim_{j\to+\infty}\W_\G(\mu_t^j,\mu_t)=0$ for all $t\in\R$. This implies that $\mu^j_t\weakto\mu_t$ as $j\to+\infty$ and
\begin{equation}\label{eq:2nd_moment_mu_j}
\lim_{j\to+\infty}\int_\G\dcc(x,0)^2\ d\mu_t^j(x)=\int_\G\dcc(x,0)^2\ d\mu_t(x)
\qquad
\forall t\in\R.
\end{equation}
In particular, $(\mu^j_t)_{t\in\R}\subset\prob_2(\G)$, $\ent(\mu^j_t)>-\infty$ for all $j\in\N$ and
\begin{equation}\label{eq:mu_j_liminf}
\liminf_{j\to+\infty}\ent(\mu^j_t)\ge\ent(\mu_t)
\qquad
\forall t\in\R.
\end{equation}
We claim that
\begin{equation}\label{eq:mu_j_limsup}
\limsup_{j\to+\infty}\ent(\mu^j_t)\le\ent(\mu_t)
\qquad
\text{for a.e.}\ t\in\R.
\end{equation}
Indeed, define the new reference measure $\nu:=e^{-c\,\dcc^2(\cdot,0)}\leb^n$, where $c>0$ is chosen so that $\nu\in\prob(\G)$. Since the function $\hat{H}(r):=r\log r+(1-r)$, for $r\ge0$, is convex and non-negative, by Jensen's inequality we have
\begin{align*}
\ent_\nu(\mu^j_t)&
=\int_\G\hat{H}\left(e^{c\,\dcc^2(\cdot,0)}\,\theta_j*\rho_\cdot(t)\right)\,d\nu
=\int_\G\hat{H}\left(\theta_j*\left(e^{c\,\dcc^2(\cdot,0)}\rho_\cdot\right)(t)\right)\,d\nu\\
&\le\int_\G\theta_j*\hat{H}\left(e^{c\,\dcc^2(\cdot,0)}\rho_\cdot\right)(t)\,d\nu
=\theta_j*\ent_\nu(\mu_\cdot)(t)
\qquad
\forall t\in\R,\ \forall j\in\N.
\end{align*}
Therefore $\limsup_{j\to+\infty}\ent_\nu(\mu^j_t)\le\ent_\nu(\mu_t)$ for a.e.\ $t\in\R$. Thus~\eqref{eq:mu_j_limsup} follows by~\eqref{eq:ent_useful_formula} and~\eqref{eq:2nd_moment_mu_j}. Combining~\eqref{eq:mu_j_liminf} and~\eqref{eq:mu_j_limsup}, we get 
\begin{equation}\label{eq:mu_j_lim}
\lim_{j\to+\infty}\ent(\mu^j_t)=\ent(\mu_t)
\qquad
\text{for a.e.}\ t\in\R.
\end{equation}

Let $(v^\G_t)_{t\in\R}$ be the horizontal time-dependent vector field relative to~$(\mu_t)_{t\in\R}$ given by \cref{prop:CE}. Let $(v^j_t)_{t\in\R}$ be the horizontal time-dependent vector field given by
\begin{equation}\label{eq:def_v_j}
v^j_t=\frac{\theta_j*(\rho_\cdot v_\cdot)(t)}{\rho^j_t}
\qquad
\forall t\in\R.
\end{equation}
We claim that $v_t^j\in L^2_\G(\mu_t^j)$ for all $t\in\R$. Indeed, applying \cref{lemma:AFP_func_meas_time}, we get
\begin{equation*}
\int_\G\|v^j_t\|_\G^2\ d\mu^j_t
\le\theta_j*\left(\int_\G\|v_\cdot\|_\G^2\ d\mu_\cdot\right)(t)
=\theta_j*|\dot{\mu}_\cdot|_\G^2(t)
\qquad
\forall t\in\R.
\end{equation*}
We also claim that $(\mu^j_t)_{t\in\R}$ solves $\de_t\mu^j_t+\diverg(v^j_t\mu^j_t)=0$ in the sense of distributions for all $j\in\N$. Indeed, if $\phi\in C^\infty_c(\R\times\R^n)$, then also $\phi^j:=\theta_j*\phi\in C^\infty_c(\R\times\R^n)$, so that
\begin{align*}
&\int_\R\int_\G\de_t\phi^j(t,x)+\scalar*{\nabla_\G\phi^j(t,x),v^j_t(x)}_\G\,d\mu^j_t(x)\,dt=\\
&\hspace*{1cm}=\int_\R\theta_j*\left(\int_\G\de_t\phi(\cdot,x)+\scalar*{\nabla_\G\phi(\cdot,x),v_\cdot(x)}_\G\ d\mu_\cdot(x)\right)(t)\,dt\\
&\hspace*{1cm}=\int_\R\int_\G\de_t\phi(t,x)+\scalar*{\nabla_\G\phi(t,x),v_t(x)}_\G\,d\mu_t(x)\,dt=0
\qquad
\forall j\in\N.
\end{align*}
By \cref{prop:CE}, we conclude that $(\mu^j_t)_t\in AC^2_{\rm loc}(\R;(\prob_2(\G),\W_\G))$ with $|\dot{\mu}^j_t|_\G^2\le\theta_j*|\dot{\mu}_\cdot|_\G^2(t)$ for all $t\in\R$ and $j\in\N$.

Finally, we claim that $\F_\G(\rho^j_t)\le\theta_j*\F_\G(\rho_\cdot)(t)$ for all $t\in\R$ and $j\in\N$. Indeed, arguing as in~\eqref{eq:C-S_for_horizontal_grad}, by Cauchy--Schwarz inequality we have
\begin{equation*}
\|\nabla_\G\rho^j_t\|_\G^2
\le\left[\theta_j*\left(\chi_{\set*{\rho_.>0}}\sqrt{\rho_\cdot}\,\frac{\|\nabla_\G\rho_\cdot\|_\G}{\sqrt{\rho_\cdot}}\right)\right]^2(t)
\le\rho^j_t\,\theta_j*\left(\frac{\|\nabla_\G\rho_\cdot\|_\G^2}{\rho_\cdot}\chi_{\set*{\rho_\cdot>0}}\right)(t),
\end{equation*}
so that
\begin{equation*}
\F_\G(\rho^j_t)\le\theta_j*\left(\int_{\set*{\rho_\cdot>0}}\frac{\|\nabla_\G\rho_\cdot\|_\G^2}{\rho_\cdot}\ dx\right)(t)=\theta_j*\F_\G(\rho_\cdot)(t).
\end{equation*}

\medskip

\textit{Step~2: smoothing in the space variable}. 
Let $j\in\N$ be fixed. Let $\eta\colon\G\to\R$ be a symmetric smooth mollifier in $\G$, i.e.\ a function $\eta\in C^\infty_c(\R^n)$ such that
\begin{equation*}
\supp\eta\subset B_1,\quad 
0\le\eta\le1,\quad
\eta(x^{-1})=\eta(x)\ \forall x\in\G,\quad
\int_{\G}\eta(x)\ dx=1.
\end{equation*}
We set $\eta_k(x):=k^Q\,\eta(\delta_k(x))$ for all $x\in\G$ and $k\in\N$. We define
\begin{equation*}
\mu_t^{j,k}:=\rho^{j,k}_t\leb^n,\qquad \rho^{j,k}_t(x):=\eta_k\star\rho^j_t(x)=\int_\G\eta_k(xy^{-1})\rho^j_t(y)\ dy, \quad x\in\G,
\end{equation*}
for all $t\in\R$ and $k\in\N$. Note that
\begin{equation}\label{eq:mu_k_left_translation}
\mu^{j,k}_t=\int_\G (l_y)_\#\mu_t^j\ \eta_k(y)dy
\qquad
\forall t\in\R,\ \forall k\in\N,
\end{equation} 
where $l_y(x)=yx$, $x,y\in\G$, denotes the left-translation.

Note that $(\mu_t^{j,k})_{t\in\R}\subset\prob_2(\G)$ for all $k\in\N$. Indeed, arguing as in~\eqref{eq:heat_second_moment}, we have
\begin{equation}\label{eq:2nd_moment_mu_k}
\begin{split}
\int_\G\dcc(x,0)^2\ d\mu^{j,k}_t(x)
&=\int_\G\left(\dcc(\cdot,0)^2\star\eta_k\right)(x)\ d\mu^j_t(x)\\
&\le 2\int_\G\dcc(x,0)^2\ d\mu^j_t(x)+2\int_\G\dcc(x,0)^2\,\eta_k(x)\,dx
\end{split}
\end{equation}
for all $t\in\R$. In particular, $\ent(\mu^{j,k}_t)>-\infty$ for all $t\in\R$ and $k\in\N$. Clearly $\mu^{j,k}_t\weakto\mu^j_t$ as $k\to+\infty$ for each fixed $t\in\R$, so that
\begin{equation}\label{eq:mu_k_liminf}
\liminf_{k\to+\infty}\ent(\mu^{j,k}_t)\ge\ent(\mu^j_t)
\qquad
\forall t\in\R.
\end{equation}
Moreover, we claim that
\begin{equation}\label{eq:mu_k_limsup}
\limsup_{k\to+\infty}\ent(\mu^{j,k}_t)\le\ent(\mu^j_t)
\qquad
\forall t\in\R.
\end{equation}
Indeed, let $\nu\in\prob(\G)$ and $\hat{H}$ as in \textit{Step~1}. Recalling~\eqref{eq:mu_k_left_translation}, by Jensen's inequality we get
\begin{equation}\label{eq:mu_k_ent_push-forward}
\ent_\nu(\mu^{j,k}_t)\le\int_\G \ent_\nu((l_y)_\#\mu_t^j)\ \eta_k(y)dy.
\end{equation}
Define $\nu_y:=(l_y)_\#\nu$ and note that $\nu_y=e^{-c\,\dcc(\cdot,y)}\leb^n$ for all $y\in\G$. Thus by~\eqref{eq:ent_push-forward_formula} we have
\begin{equation*}
\ent_\nu((l_y)_\#\mu_t^j)=\ent_{\nu_{y^{-1}}}(\mu_t^j)=\ent(\mu^j_t)+c\int_\G\dcc(yx,0)^2\ d\mu^j_t(x)
\qquad
\forall y\in\G.
\end{equation*}
By the dominated convergence theorem we get that $y\mapsto\ent_\nu((l_y)_\#\mu_t^j)$ is continuous and therefore~\eqref{eq:mu_k_limsup} follows by passing to the limit as $k\to+\infty$ in~\eqref{eq:mu_k_ent_push-forward}. Combining~\eqref{eq:mu_k_liminf} and~\eqref{eq:mu_k_limsup}, we get 
\begin{equation}\label{eq:mu_k_lim}
\lim_{k\to+\infty}\ent(\mu^{j,k}_t)=\ent(\mu^j_t)
\qquad
\forall t\in\R.
\end{equation}
 
Let $(v^j_t)_{t\in\R}$ be as in~\eqref{eq:def_v_j} and let $(v^{j,k}_t)_{t\in\R}$ be the horizontal time-dependent vector field given by
\begin{equation}\label{eq:def_v_k}
v^{j,k}_t=\frac{\eta_k\star(\rho^j_t v^j_t)}{\rho^{j,k}_t}
\qquad
\forall t\in\R,\ \forall k\in\N.
\end{equation}
We claim that $v_t^{j,k}\in L^2_\G(\mu_t^{j,k})$ for all $t\in\R$. Indeed, applying \cref{lemma:AFP_func_meas}, we get
\begin{equation}\label{eq:mu_k_velocity}
\int_\G\|v^{j,k}_t\|_\G^2\ d\mu^{j,k}_t\le\int_\G\|v^j_t\|_\G^2\ d\mu^j_t\le|\dot{\mu}^j_t|_\G^2
\qquad
\forall t\in\R,\ \forall k\in\N.
\end{equation}
We also claim that $(\mu^{j,k}_t)_{t\in\R}$ solves $\de_t\mu^{j,k}_t+\diverg(v^{j,k}_t\mu^{j,k}_t)=0$ in the sense of distributions for all $k\in\N$. Indeed, if $\phi\in C^\infty_c(\R\times\R^n)$, then also $\phi^k:=\eta_k\star\phi\in C^\infty_c(\R\times\R^n)$, so that
\begin{align*}
&\int_\R\int_\G\de_t\phi^k(t,x)+\scalar*{\nabla_\G\phi^k(t,x),v^{j,k}_t(x)}_\G\,d\mu^{j,k}_t(x)\,dt=\\
&\hspace*{1cm}=\int_\G\eta_k\star\left(\int_\R\de_t\phi(t,\cdot)+\scalar*{\nabla_\G\phi(t,\cdot),v^j_t(\cdot)}_\G\,\rho^j_t(\cdot)\,dt\right)(x)\ dx\\
&\hspace*{1cm}=\int_\R\int_\G\de_t\phi(t,x)+\scalar*{\nabla_\G\phi(t,x),v^j_t(x)}_\G\,d\mu^j_t(x)\,dt=0
\qquad
\forall k\in\N.
\end{align*}
Here we have exploited a key property of the space $(\G,\dcc,\leb^n)$ which cannot be expected in a general metric measure space, that is, the continuity equation in~\eqref{eq:CE} is preserved under regularization in the space variable. By \cref{prop:CE} and~\eqref{eq:mu_k_velocity}, we conclude that $(\mu^{j,k}_t)_t\in AC^2_{\rm loc}(\R;(\prob_2(\G),\W_\G))$ with $|\dot{\mu}^{j,k}_t|_\G\le\|v^{j,k}_t\|_{L^2_\G(\mu^{j,k}_t)}\le\|v^j_t\|_{L^2_\G(\mu^j_t)}\le|\dot{\mu}^j_t|_\G$ for all $t\in\R$ and $k\in\N$.

Finally, we claim that $\F_\G(\rho^{j,k}_t)\le\F_\G(\rho^j_t)$ for all $t\in\R$ and $k\in\N$. Indeed, arguing as in~\eqref{eq:C-S_for_horizontal_grad}, by Cauchy--Schwarz inequality we have
\begin{equation*}
\|\nabla_\G\rho^{j,k}_t\|_\G^2
\le\left[\eta_k\star\left(\chi_{\set*{\rho^j_t>0}}\sqrt{\rho^j_t}\,\frac{\|\nabla_\G\rho^j_t\|_\G}{\sqrt{\rho^j_t}}\right)\right]^2
\le\rho^{j,k}_t\,\eta_k\star\left(\frac{\|\nabla_\G\rho^j_t\|_\G^2}{\rho^j_t}\chi_{\set*{\rho^j_t>0}}\right),
\end{equation*}
so that
\begin{align*}
\F_\G(\rho^{j,k}_t)
\le\int_\G\eta_k\star\left(\frac{\|\nabla_\G\rho^j_t\|_\G^2}{\rho^j_t}\chi_{\set*{\rho^j_t>0}}\right)\ dx
=\int_{\set*{\rho^j_t>0}}\frac{\|\nabla_\G\rho^j_t\|_\G^2}{\rho^j_t}\ dx
=\F_\G(\rho^j_t).
\end{align*} 

\medskip

\textit{Step~3: truncated entropy}. 
Let $j,k\in\N$ be fixed. For any $m\in\N$, consider the maps $z_m,H_m\colon[0,+\infty)\to\R$ defined in~\eqref{eq:def_truncated_entropy}. We set $\tilde{z}_m(r)=z_m(r)+m$ for all $r\ge0$ and $m\in\N$. Since $\rho^{j,k}_t\in\prob(\G)$ for all $t\in\R$, differentiating under the integral sign we get
\begin{equation*}
\frac{d}{dt}\int_\G H_m(\rho^{j,k}_t)\ dx
=\int_\G \tilde{z}_m(\rho^{j,k}_t)\,\de_t\rho^{j,k}_t\ dx
\end{equation*}
for all $t\in\R$. Fix $t_0,t_1\in\R$ with $t_0<t_1$. Then
\begin{equation}\label{eq:truncated_ent_before_AC}
\int_\G H_m(\rho^{j,k}_{t_1})\ dx-\int_\G H_m(\rho^{j,k}_{t_0})\ dx
=\int_{t_0}^{t_1}\int_\G \tilde{z}_m(\rho^{j,k}_t)\,\de_t\rho^{j,k}_t\,dxdt.
\end{equation}

Let $(\alpha_i)_{i\in\N}\subset C^\infty_c(t_0,t_1)$ such that $0\le\alpha_i\le1$ and $\alpha_i\to\chi_{(t_0,t_1)}$ in $L^1(\R)$ as $i\to+\infty$. Let $i\in\N$ be fixed and consider the function $u_t(x)=\tilde{z}_m(\rho^{j,k}_t(x))\,\alpha_i(t)$ for all $(t,x)\in\R\times\R^n$. We claim that there exists $(\psi^h)_{h\in\N}\subset C^\infty_c(\R^{n+1})$ such that 
\begin{equation}\label{eq:approx_for_AC}
\lim_{h\to+\infty}\int_\R\int_\G\,\abs{u_t(x)-\psi^h_t(x)}^2+\|\nabla_\G u_t(x)-\nabla_\G\psi^h_t(x)\|_\G^2\ dx\,dt=0.
\end{equation}
Indeed, consider the direct product $\G^*=\R\times\G$ and note that~$\G^*$ is a Carnot group. Recalling~\eqref{eq:def_horiz_sobolev_space}, we know that $C^\infty_c(\R^{n+1})$ is dense in $W^{1,2}_{\G^*}(\R^{n+1})$. Thus, to get~\eqref{eq:approx_for_AC} we just need to prove that $u\in W^{1,2}_{\G^*}(\R^{n+1})$ (in fact, the $L^2$-integrability of $\de_t u$ is not strictly necessary in order to achieve~\eqref{eq:approx_for_AC}). We have $\rho^{j,k},\de_t\rho^{j,k}\in L^\infty(\R^{n+1})$, because
\begin{equation*}
\|\rho^{j,k}\|_{L^\infty(\R^{n+1})}\le\|\eta_k\|_{L^\infty(\R^n)},
\qquad
\|\de_t\rho^{j,k}\|_{L^\infty(\R^{n+1})}\le\|\theta_j'\|_{L^1(\R)}\|\eta_k\|_{L^\infty(\R^n)}
\end{equation*}
by Young's inequality. Moreover, $\rho^{j,k}\alpha_i,\de_t\rho^{j,k}\alpha_i\in L^1(\R^{n+1})$, because
\begin{equation*}
\|\rho^{j,k}\alpha_i\|_{L^1(\R^{n+1})}=\|\alpha_i\|_{L^1(\R)},
\qquad
\|\de_t\rho^{j,k}\alpha_i\|_{L^1(\R^{n+1})}\le\|\theta_j'\|_{L^1(\R)}\|\alpha_i\|_{L^1(\R)}.
\end{equation*} 
Therefore $\rho^{j,k}\alpha_i,\de_t\rho^{j,k}\alpha_i\in L^2(\R^{n+1})$, which immediately gives $u\in L^2(\R^{n+1})$. Now by \textit{Step~2} we have that
\begin{equation*}
\int_\G\|\nabla_\G\rho^{j,k}_t\|_\G^2\ dx
\le\|\rho^{j,k}_t\|_{L^\infty(\R^n)}\F_\G(\rho^{j,k}_t)
\le\|\rho^{j,k}_t\|_{L^\infty(\R^n)}\F_\G(\rho^j_t)
\qquad
\forall t\in\R,
\end{equation*}
so that by \textit{Step~1} we get
\begin{equation*}
\|\nabla_\G\rho^{j,k}\alpha_i\|_{L^2(\R^{n+1})}^2
\le\|\eta_k\|_{L^\infty(\R^n)}
\|\alpha_i^2\cdot\theta_j*\F_\G(\rho_\cdot)\|_{L^1(\R)}.
\end{equation*}
This prove that $\|\nabla_\G u\|_\G\in L^2(\R^{n+1})$. The previous estimates easily imply that also $\de_t u\in L^2(\R^{n+1})$. This concludes the proof of~\eqref{eq:approx_for_AC}.

Since $\rho^{j,k},\de_t\rho^{j,k}\in L^\infty(\R^{n+1})$, by~\eqref{eq:approx_for_AC} we get that
\begin{equation}\label{eq:limit_trunc_ent_1}
\lim_{h\to+\infty}\int_\R\int_\G\psi^h_t\,\de_t\rho^{j,k}_t\,dx dt
=\int_\R\alpha_i(t)\int_\G \tilde{z}_m(\rho^{j,k}_t)\,\de_t\rho^{j,k}_t\, dxdt
\end{equation}
and 
\begin{equation}\label{eq:limit_trunc_ent_2}
\lim_{h\to+\infty}\int_\R\int_\G\scalar*{\nabla_\G\psi^h_t,v^{j,k}_t}_\G\ d\mu^{j,k}_t\,dt
=\int_\R\alpha_i(t)\int_\G \tilde{z}_m'(\rho^{j,k}_t)\,\scalar*{\nabla_\G\rho^{j,k}_t,v^{j,k}_t}_\G\,d\mu^{j,k}_t\,dt
\end{equation}
for each fixed $i\in\N$. Since $\de_t\mu^{j,k}_t+\diverg(v^{j,k}_t\mu^{j,k}_t)=0$ in the sense of distributions by \textit{Step~2}, for each $h\in\N$ we have 
\begin{equation*}
\int_\R\int_\G\psi^h_t\,\de_t\rho^{j,k}_t\,dx dt
=-\int_\R\int_\G\de_t\psi^h_t\ d\mu^{j,k}_t\,dt
=\int_\R\int_\G\scalar*{\nabla_\G\psi^h_t,v^{j,k}_t}_\G\,d\mu^{j,k}_t\,dt.
\end{equation*}
We can thus combine~\eqref{eq:limit_trunc_ent_1} and~\eqref{eq:limit_trunc_ent_2} to get
\begin{equation}\label{eq:trunc_ent_i}
\int_\R\alpha_i(t)\int_\G \tilde{z}_m(\rho^{j,k}_t)\,\de_t\rho^{j,k}_t\, dxdt
=\int_\R\alpha_i(t)\int_\G \tilde{z}_m'(\rho^{j,k}_t)\,\scalar*{\nabla_\G\rho^{j,k}_t,v^{j,k}_t}_\G\,d\mu^{j,k}_t\,dt.
\end{equation}
Passing to the limit as $i\to+\infty$ in~\eqref{eq:trunc_ent_i}, we finally get that
\begin{equation}\label{eq:limit_through_AC}
\int_{t_0}^{t_1}\int_\G \tilde{z}_m(\rho^{j,k}_t)\,\de_t\rho^{j,k}_t\, dxdt
=\int_{t_0}^{t_1}\int_\G \tilde{z}_m'(\rho^{j,k}_t)\,\scalar*{\nabla_\G\rho^{j,k}_t,v^{j,k}_t}_\G\,d\mu^{j,k}_t\,dt 
\end{equation}
by the dominated convergence theorem. Combining~\eqref{eq:truncated_ent_before_AC} and~\eqref{eq:limit_through_AC}, we get
\begin{equation}\label{eq:truncated_ent_after_AC}
\int_\G H_m(\rho^{j,k}_{t_1})\ dx-\int_\G H_m(\rho^{j,k}_{t_0})\ dx=\int_{t_0}^{t_1}\int_{\set*{e^{-m-1}<\rho^{j,k}_t<e^{m-1}}}\scalar*{-w^{j,k}_t,v^{j,k}_t}_\G\,d\mu^{j,k}_t\,dt
\end{equation}
for all $t_0,t_1\in\R$ with $t_0<t_1$, where $w^{j,k}_t=\nabla_\G\rho^{j,k}_t/\rho^{j,k}_t$ in $L^2_\G(\mu^{j,k}_t)$ for all $t\in\R$. 

\bigskip

We can now conclude the proof. We pass to the limit as $m\to+\infty$ in~\eqref{eq:truncated_ent_before_AC} and we get 
\begin{equation}\label{eq:truncated_ent_limit_m}
\ent(\mu^{j,k}_{t_1})-\ent(\mu^{j,k}_{t_0})=\int_{t_0}^{t_1}\int_\G \scalar*{-w^{j,k}_t,v^{j,k}_t}_\G\,d\mu^{j,k}_t\,dt
\end{equation}
for all $t_0,t_1\in\R$ with $t_0<t_1$. For the left-hand side of~\eqref{eq:truncated_ent_before_AC}, recall~\eqref{eq:truncated_ent_prop_1} and~\eqref{eq:truncated_ent_prop_2} and apply the monotone convergence theorem on $\set*{\rho^{j,k}_t\le1}$ and the dominated convergence theorem on $\set*{\rho^{j,k}_t>1}$. For the right-hand side of~\eqref{eq:truncated_ent_before_AC}, recall that $t\mapsto\|v^{j,k}_t\|_{L^2_\G(\mu^{j,k}_t)}\in L^2_{\rm loc}(\R)$ and that $t\mapsto\|w^{j,k}_t\|_{L^2_\G(\mu^{j,k}_t)}=\F_\G^{1/2}(\rho^{j,k}_t)\in L^2_{\rm loc}(\R)$ by \textit{Step~2} and apply Cauchy--Schwarz inequality and the dominated convergence theorem.

We pass to the limit as $k\to+\infty$ in~\eqref{eq:truncated_ent_limit_m} and we get
\begin{equation}\label{eq:truncated_ent_limit_k}
\ent(\mu^j_{t_1})-\ent(\mu^j_{t_0})=\int_{t_0}^{t_1}\int_\G \scalar*{-w^j_t,v^j_t}_\G\ d\mu^j_t\,dt
\end{equation}
for all $t_0,t_1\in\R$ with $t_0<t_1$, where $w^j_t=\nabla_\G\rho^j_t/\rho^j_t$ in $L^2_\G(\mu^j_t)$ for all $t\in\R$. For the left-hand side of~\eqref{eq:truncated_ent_limit_m}, recall~\eqref{eq:mu_k_lim}. For the right-hand side of~\eqref{eq:truncated_ent_limit_m}, recall that $t\mapsto\|v^j_t\|_{L^2_\G(\mu^j_t)}\in L^2_{\rm loc}(\R)$ and that $t\mapsto\|w^j_t\|_{L^2_\G(\mu^{j,k}_t)}=\F_\G^{1/2}(\rho^j_t)\in L^2_{\rm loc}(\R)$ by \textit{Step~1}, so that the conclusion follows applying \cref{lemma:AFP_func_meas}, \cref{lemma:polarization}, Cauchy--Schwarz inequality and the dominated convergence theorem.

We finally pass to the limit as $j\to+\infty$ in~\eqref{eq:truncated_ent_limit_k} and we get
\begin{equation}\label{eq:truncated_ent_limit}
\ent(\mu_{t_1})-\ent(\mu_{t_0})=\int_{t_0}^{t_1}\int_\G \scalar*{-w^\G_t,v^\G_t}_\G\,d\mu_t\,dt
\end{equation}
for all $t_0,t_1\in\R\setminus\mathcal{N}$ with $t_0<t_1$, where $\mathcal{N}\subset\R$ is the set of discontinuity points of $t\mapsto\ent(\mu_t)$ and $w^\G_t=\nabla_\G\rho_t/\rho_t$ in $L^2_\G(\mu_t)$ for a.e.\ $t\in\R$ by \cref{prop:slope_ent_G_general}. For the left-hand side of~\eqref{eq:truncated_ent_limit_k}, recall~\eqref{eq:mu_j_lim}. For the right-hand side of~\eqref{eq:truncated_ent_limit_k}, recall that $t\mapsto\|v^\G_t\|_{L^2_\G(\mu_t)}\in L^2_{\rm loc}(\R)$ and that $t\mapsto\|w^\G_t\|_{L^2_\G(\mu^{j,k}_t)}=\F_\G^{1/2}(\rho_t)\in L^2_{\rm loc}(\R)$, so that the conclusion follows applying \cref{lemma:AFP_func_meas_time}, \cref{lemma:polarization}, Cauchy--Schwarz inequality and the dominated convergence theorem. From~\eqref{eq:truncated_ent_limit} we immediately deduce~\eqref{eq:GF_ent_plus_AC} and we can conclude the proof by \cref{lemma:GF_ent_plus_AC_is_heat_diff}. In particular, by \cref{prop:entropy_dissipation} the map $t\mapsto\ent(\mu_t)$ is locally absolutely continuous on~$(0,+\infty)$.
\end{proof}


\begin{bibdiv}
\begin{biblist}

\bib{AFP00}{book}{
   author={Ambrosio, Luigi},
   author={Fusco, Nicola},
   author={Pallara, Diego},
   title={Functions of bounded variation and free discontinuity problems},
   series={Oxford Mathematical Monographs},
   publisher={The Clarendon Press, Oxford University Press, New York},
   date={2000},
}

\bib{AG13}{article}{
   author={Ambrosio, Luigi},
   author={Gigli, Nicola},
   title={A user's guide to optimal transport},
   conference={
      title={Modelling and optimisation of flows on networks},
   },
   book={
      series={Lecture Notes in Math.},
      volume={2062},
      publisher={Springer, Heidelberg},
   },
   date={2013},
   pages={1--155},
}

\bib{AGMR15}{article}{
   author={Ambrosio, Luigi},
   author={Gigli, Nicola},
   author={Mondino, Andrea},
   author={Rajala, Tapio},
   title={Riemannian Ricci curvature lower bounds in metric measure spaces with $\sigma$-finite measure},
   journal={Trans. Amer. Math. Soc.},
   volume={367},
   date={2015},
   number={7},
   pages={4661--4701},
}

\bib{AGS08}{book}{
   author={Ambrosio, Luigi},
   author={Gigli, Nicola},
   author={Savar\'e, Giuseppe},
   title={Gradient flows in metric spaces and in the space of probability
   measures},
   series={Lectures in Mathematics ETH Z\"urich},
   edition={2},
   publisher={Birkh\"auser Verlag, Basel},
   date={2008},
}

\bib{AGS14}{article}{
   author={Ambrosio, Luigi},
   author={Gigli, Nicola},
   author={Savaré, Giuseppe},
   title={Calculus and heat flow in metric measure spaces and applications to spaces with Ricci bounds from below},
   journal={Invent. Math.},
   volume={195},
   date={2014},
   number={2},
   pages={289--391},
}

\bib{AGS14-2}{article}{
   author={Ambrosio, Luigi},
   author={Gigli, Nicola},
   author={Savar\'e, Giuseppe},
   title={Metric measure spaces with Riemannian Ricci curvature bounded from
   below},
   journal={Duke Math. J.},
   volume={163},
   date={2014},
   number={7},
   pages={1405--1490},
}

\bib{AS07}{article}{
   author={Ambrosio, Luigi},
   author={Savaré, Giuseppe},
   title={Gradient flows of probability measures},
   conference={
      title={Handbook of differential equations: evolutionary equations.
      Vol. III},
   },
   book={
      series={Handb. Differ. Equ.},
      publisher={Elsevier/North-Holland, Amsterdam},
   },
   date={2007},
   pages={1--136},
}

\bib{BB16}{article}{
   author={Baudoin, Fabrice},
   author={Bonnefont, Michel},
   title={Reverse Poincar\'e inequalities, isoperimetry, and Riesz transforms
   in Carnot groups},
   journal={Nonlinear Anal.},
   volume={131},
   date={2016},
   pages={48--59},
}

\bib{B08}{article}{
   author={Bernard, Patrick},
   title={Young measures, superposition and transport},
   journal={Indiana Univ. Math. J.},
   volume={57},
   date={2008},
   number={1},
   pages={247--275},
   issn={0022-2518},
}

\bib{BLU07}{book}{
   author={Bonfiglioli, A.},
   author={Lanconelli, E.},
   author={Uguzzoni, F.},
   title={Stratified Lie groups and potential theory for their sub-Laplacians},
   series={Springer Monographs in Mathematics},
   publisher={Springer, Berlin},
   date={2007},
}

\bib{B73}{book}{
   author={Br\'ezis, H.},
   title={Op\'erateurs maximaux monotones et semi-groupes de contractions dans
   les espaces de Hilbert},
   language={French},
   note={North-Holland Mathematics Studies, No. 5. Notas de Matem\'atica
   (50)},
   publisher={North-Holland Publishing Co., Amsterdam-London; American
   Elsevier Publishing Co., Inc., New York},
   date={1973},
}

\bib{CDPT07}{book}{
   author={Capogna, Luca},
   author={Danielli, Donatella},
   author={Pauls, Scott D.},
   author={Tyson, Jeremy T.},
   title={An introduction to the Heisenberg group and the sub-Riemannian isoperimetric problem},
   series={Progress in Mathematics},
   volume={259},
   publisher={Birkh\"auser Verlag, Basel},
   date={2007},
   pages={xvi+223},
}

\bib{CM14}{article}{
   author={Carlen, Eric A.},
   author={Maas, Jan},
   title={An analog of the 2-Wasserstein metric in non-commutative
   probability under which the fermionic Fokker-Planck equation is gradient
   flow for the entropy},
   journal={Comm. Math. Phys.},
   volume={331},
   date={2014},
   number={3},
   pages={887--926},
}

\bib{E10}{article}{
   author={Erbar, Matthias},
   title={The heat equation on manifolds as a gradient flow in the
   Wasserstein space},
   journal={Ann. Inst. Henri Poincar\'e Probab. Stat.},
   volume={46},
   date={2010},
   number={1},
   pages={1--23},
}

\bib{EM14}{article}{
   author={Erbar, Matthias},
   author={Maas, Jan},
   title={Gradient flow structures for discrete porous medium equations},
   journal={Discrete Contin. Dyn. Syst.},
   volume={34},
   date={2014},
   number={4},
   pages={1355--1374},
}

\bib{FSS10}{article}{
   author={Fang, Shizan},
   author={Shao, Jinghai},
   author={Sturm, Karl-Theodor},
   title={Wasserstein space over the Wiener space},
   journal={Probab. Theory Related Fields},
   volume={146},
   date={2010},
   number={3-4},
   pages={535--565},
}

\bib{FSSC96}{article}{
   author={Franchi, Bruno},
   author={Serapioni, Raul},
   author={Serra Cassano, Francesco},
   title={Meyers-Serrin type theorems and relaxation of variational integrals depending on vector fields},
   journal={Houston J. Math.},
   volume={22},
   date={1996},
   number={4},
   pages={859--890},
}

\bib{FSSC03}{article}{
   author={Franchi, Bruno},
   author={Serapioni, Raul},
   author={Serra Cassano, Francesco},
   title={On the structure of finite perimeter sets in step 2 Carnot groups},
   journal={J. Geom. Anal.},
   volume={13},
   date={2003},
   number={3},
   pages={421--466},
}

\bib{G10}{article}{
   author={Gigli, Nicola},
   title={On the heat flow on metric measure spaces: existence, uniqueness
   and stability},
   journal={Calc. Var. Partial Differential Equations},
   volume={39},
   date={2010},
   number={1-2},
   pages={101--120},
}

\bib{GH15}{article}{
   author={Gigli, Nicola},
   author={Han, Bang-Xian},
   title={The continuity equation on metric measure spaces},
   journal={Calc. Var. Partial Differential Equations},
   volume={53},
   date={2015},
   number={1-2},
   pages={149--177},
}

\bib{GKO13}{article}{
   author={Gigli, Nicola},
   author={Kuwada, Kazumasa},
   author={Ohta, Shin-Ichi},
   title={Heat flow on Alexandrov spaces},
   journal={Comm. Pure Appl. Math.},
   volume={66},
   date={2013},
   number={3},
   pages={307--331},
}

\bib{GO12}{article}{
   author={Gigli, Nicola},
   author={Ohta, Shin-Ichi},
   title={First variation formula in Wasserstein spaces over compact
   Alexandrov spaces},
   journal={Canad. Math. Bull.},
   volume={55},
   date={2012},
   number={4},
   pages={723--735},
}

\bib{JKO98}{article}{
   author={Jordan, Richard},
   author={Kinderlehrer, David},
   author={Otto, Felix},
   title={The variational formulation of the Fokker-Planck equation},
   journal={SIAM J. Math. Anal.},
   volume={29},
   date={1998},
   number={1},
   pages={1--17},
}

\bib{J09}{article}{
   author={Juillet, Nicolas},
   title={Geometric inequalities and generalized Ricci bounds in the
   Heisenberg group},
   journal={Int. Math. Res. Not. IMRN},
   date={2009},
   number={13},
   pages={2347--2373},
}

\bib{J14}{article}{
   author={Juillet, Nicolas},
   title={Diffusion by optimal transport in Heisenberg groups},
   journal={Calc. Var. Partial Differential Equations},
   volume={50},
   date={2014},
   number={3-4},
   pages={693--721},
}

\bib{KL09}{article}{
   author={Khesin, Boris},
   author={Lee, Paul},
   title={A nonholonomic Moser theorem and optimal transport},
   journal={J. Symplectic Geom.},
   volume={7},
   date={2009},
   number={4},
   pages={381--414},
}

\bib{LeD16}{article}{
   author={Le Donne, Enrico},
   title={A primer on Carnot groups: homogenous groups, Carnot-Carath\'eodory
   spaces, and regularity of their isometries},
   journal={Anal. Geom. Metr. Spaces},
   volume={5},
   date={2017},
   pages={116--137},
}

\bib{L06}{article}{
   author={Li, Hong-Quan},
   title={Estimation optimale du gradient du semi-groupe de la chaleur sur le groupe de Heisenberg},
   journal={J. Funct. Anal.},
   volume={236},
   date={2006},
   number={2},
   pages={369--394},
}

\bib{L07}{article}{
   author={Li, Hong-Quan},
   title={Estimations asymptotiques du noyau de la chaleur sur les groupes de Heisenberg},
   journal={C. R. Math. Acad. Sci. Paris},
   volume={344},
   date={2007},
   number={8},
   pages={497--502},
}

\bib{M11}{article}{
   author={Maas, Jan},
   title={Gradient flows of the entropy for finite Markov chains},
   journal={J. Funct. Anal.},
   volume={261},
   date={2011},
   number={8},
   pages={2250--2292},
}

\bib{M76}{article}{
   author={Milnor, John},
   title={Curvatures of left invariant metrics on Lie groups},
   journal={Advances in Math.},
   volume={21},
   date={1976},
   number={3},
   pages={293--329},
}

\bib{M02}{book}{
   author={Montgomery, Richard},
   title={A tour of subriemannian geometries, their geodesics and
   applications},
   series={Mathematical Surveys and Monographs},
   volume={91},
   publisher={American Mathematical Society, Providence, RI},
   date={2002},
   pages={xx+259},
}

\bib{O09}{article}{
   author={Ohta, Shin-ichi},
   title={Gradient flows on Wasserstein spaces over compact Alexandrov
   spaces},
   journal={Amer. J. Math.},
   volume={131},
   date={2009},
   number={2},
   pages={475--516},
}

\bib{OS09}{article}{
   author={Ohta, Shin-ichi},
   author={Sturm, Karl-Theodor},
   title={Heat flow on Finsler manifolds},
   journal={Comm. Pure Appl. Math.},
   volume={62},
   date={2009},
   number={10},
   pages={1386--1433},
}

\bib{O01}{article}{
   author={Otto, Felix},
   title={The geometry of dissipative evolution equations: the porous medium
   equation},
   journal={Comm. Partial Differential Equations},
   volume={26},
   date={2001},
   number={1-2},
   pages={101--174},
}

\bib{P11}{article}{
   author={Petrunin, Anton},
   title={Alexandrov meets Lott-Villani-Sturm},
   journal={M\"unster J. Math.},
   volume={4},
   date={2011},
   pages={53--64},
}

\bib{vRS05}{article}{
   author={von Renesse, Max-K.},
   author={Sturm, Karl-Theodor},
   title={Transport inequalities, gradient estimates, entropy, and Ricci curvature},
   journal={Comm. Pure Appl. Math.},
   volume={58},
   date={2005},
   number={7},
   pages={923--940},
}

\bib{S07}{article}{
   author={Savar\'e, Giuseppe},
   title={Gradient flows and diffusion semigroups in metric spaces under
   lower curvature bounds},
   journal={C. R. Math. Acad. Sci. Paris},
   volume={345},
   date={2007},
   number={3},
   pages={151--154},
}

\bib{VSC92}{book}{
   author={Varopoulos, N. Th.},
   author={Saloff-Coste, L.},
   author={Coulhon, T.},
   title={Analysis and geometry on groups},
   series={Cambridge Tracts in Mathematics},
   volume={100},
   publisher={Cambridge University Press, Cambridge},
   date={1992},
}

\bib{V09}{book}{
   author={Villani, Cédric},
   title={Optimal transport, old and new},
   series={Fundamental Principles of Mathematical Sciences},
   volume={338},
   publisher={Springer-Verlag, Berlin},
   date={2009},
}

\bib{W11}{article}{
   author={Wang, Feng-Yu},
   title={Equivalent semigroup properties for the curvature-dimension
   condition},
   journal={Bull. Sci. Math.},
   volume={135},
   date={2011},
   number={6-7},
   pages={803--815},
}

\end{biblist}
\end{bibdiv}

\end{document}